\documentclass[a4paper,12pt]{article}
\usepackage[latin1]{inputenc}
\usepackage{amssymb}
\usepackage{amsmath}
\usepackage[english]{babel}
\usepackage{stmaryrd,dsfont,hyperref}
\usepackage{url} 
\DeclareMathOperator*{\esssup}{ess\,sup}
\DeclareMathOperator*{\essinf}{ess\,inf}

\newtheorem{theorem}{Theorem}
\newtheorem{remark}{Remark}
\newtheorem{lemma}{Lemma}
\newtheorem{corollary}{Corollary}
\newtheorem{proposition}{Proposition}
\newtheorem{definition}{Definition}
\newenvironment{proof}[1][Proof]{\textbf{#1.} }{\ \rule{0.5em}{0.5em}}
\usepackage{vmargin}
   \setmarginsrb{3.5cm}{4cm}{3.5cm}{4cm}{0cm}{0cm}{0cm}{0.5cm}
\title{Well-posedness of reflected BSDEs with default time and irregular barrier: An application to optimal control}
\author{
Badr ELMANSOURI\textsuperscript{*,a} and Mohamed EL OTMANI\textsuperscript{b}\\\\
\textsuperscript{a} Cadi Ayyad University (UCA) \\
National School of Applied Sciences of Marrakech (ENSA-M)\\
BP 575, Avenue Abdelkrim Khattabi, 40000, Guéliz, Marrakech, Morocco\\\\
\textsuperscript{b} Laboratory of Analysis and Applied Mathematics (LAMA) \\
Faculty of Sciences Agadir, Ibn Zohr University\\
BP 8106, Hay Dakhla, 80000,
Agadir, Morocco\\\\
Emails: \url{b.elmansouri@uca.ac.ma} \& \url{m.elotmani@uiz.ac.ma} \\
\textsuperscript{*} Corresponding author.
}

\date{}
\begin{document}
\maketitle
\begin{abstract}
We consider a reflected backward stochastic differential equations with default time and an optional  barrier in a filtration generated by a one-dimensional Brownian motion and a defaultable process. We suppose that the barrier have trajectories with left and right finite limits. We provide the existence and uniqueness result when the coefficient is scholastic Lipschitz by using a modified penalization method.  Under an additional assumption of right-upper semi-continuity along stopping times on the trajectories of the barrier, we characterize the state process for such RBSDEs as the value function of an optimal stopping problem associated with a non-linear $f$-expectation.
\end{abstract}
\textbf{{keyword :}} Reflected backward stochastic differential equations, optional barriers, Default time, Penalization method, $f$-expectation\\
\textbf{MSC[2000] :} 60H05 - 60H15 - 60H20
%
\section{Introduction}
The notion of backward stochastic differential equations (BSDEs, for short) have been introduced by Bismut \cite{bismut1973conjugate} in the linear case, then extended to the general case of non-linear driver by Pardoux and Peng \cite{pardoux1990adapted}. For such of equations, European option pricing and hedging are only two of the many financial uses for BSDEs (see for instance \cite{barrieu2005inf, el1997backward}). 

Reflected backward stochastic differential equations  (RBSDEs, for short) have been introduced for the first time by El Karoui et al. \cite{el1997reflected}. Those equations are a specific kind of BSDEs in which the initial part of the solution is restricted to remain above a certain barrier. In their seminal work \cite{el1997reflected}, the authors deal with a Brownian framework, a Lipschitz driver and a continuous obstacle. Since this foundation, many efforts have been made to generalize this work to the case of discontinuities by dealing with barriers having right-continuous and left limits trajectories or a more general filtration additionally including Poisson or L\'{e}vy processes (see for instance \cite{STE, ElKaroui1997, ElOtmani2009, essaky2008reflected, hamadene2002reflected, HO, hamadene2016reflected} among others). Note that those equations are highly motivated by the pricing of American contingent claims in different financial markets (we also refer to \cite{elmansouri2026rcll}).

Going beyond right continuity, the classical I\^o's formula and Doob-Meyer decomposition are no longer available to use, and some other generalized results from the general theory of stochastic processes and optimal stopping field need to be introduced and employed (see for example Mertens' decomposition for not necessarily right-continuous supermartingales \cite[Theorem A.1]{Grigorova2017}, Gal'chouk \cite{gal1981optional} and Lenglart \cite{Leng} for a generalization of the classical  I\^o's formula,  I\^o's formula for processes with regulated trajectories \cite[Theorem A.1]{klimsiak2019reflected}, and the references therein to cite a few). RBSDEs with a not necessary right-continuous obstacle has been an interesting topic during last dedicates due to it's link with optimization problems associated with the so called {\it non linear expectation}, $f$-expectation or $f$-evaluation operator defined through the solution of a given classical BSDE which have been used in the literature on dynamic risk measures (see or example \cite{gianin2006risk,quenez2013bsdes}). In this context, and to the best of our knowledge, Grigorova et al. \cite{Grigorova2017}  was the first paper dealing with RBSDEs
with barriers that are not right-continuous. With a square integrable barrier having left-limits and right-upper semi-continuous trajectories, and a Lipschitz continuity assumption on the driver, the authors in \cite{Grigorova2017} proofs the existence and uniqueness result using the alternative tools mentioned above beyond the right-continuous context. Moreover, the authors also provides some links with an  optimal stopping with $f$-expectations. Following this work, many efforts have been made to study this type of RBSDEs in a more general context under weaker assumptions on the data (see for instance \cite{baadi2018reflected,bouhadou2022rbsdes,bouhadou2021reflected,elmansouri2025,grigorova2020optimal,klimsiak2019reflected,marzougue2020predictable,marzougue2023irregular})

In this paper, we consider a complete probability space  $\left(\Omega,\mathcal{F},\mathbb{P}\right)$ and a deterministic finite terminal time $T \in (0,+\infty)$. On this probability space we define a one-dimensional standard Brownian motion $B:=(B_t)_{t \leq T}$ with natural filtration denoted by $\mathbb{G}:=\left(\mathcal{G}_t\right)_{t \leq T}$. Let $\tau: \Omega \to (0,+\infty)$ be a random time not necessary a $\mathbb{G}$-stopping time which models a {\it default time}. The associated indicator process $H:=\mathds{1}_{\llbracket \tau,+\infty \llbracket}$ is termed as the {\it defaultable process}. In credit risk modeling and defaultable markets, the noise and fluctuation is usually created by the Brownian motion $B$ and the defaultable process $H$. Henceforth, at each time $t$ before the expiry time $T$, we have two sources of information: The one that is public to all financial agents and contained in the $\sigma$-algebra $\mathcal{G}_t$, and the other one that is related to the occurrence of the default events. In mathematical finance, in order to solve the problems of pricing and hedging of contingent claims in imperfects  market with default, we proceed using the progressive enlargement of filtration of  $\mathbb{G}$ with respect to the default time $\tau$. Namely, we consider the new flow of information $\mathbb{F}:\left(\mathcal{F}_t\right)_{t \leq T}$ given by $\mathcal{F}_t:=\cap_{\epsilon>0}\mathcal{H}^0_{t+\epsilon}$ with $\mathcal{H}^0_t:=\mathcal{G}_t \vee \sigma\left(\tau \wedge t\right)$ completed by all $\mathbb{P}$-null sets of $\mathcal{F}$. In the new setup $\left(\Omega,\mathcal{F},\mathbb{F},\mathbb{P}\right)$, the defaultable process $H$ becomes an $\mathbb{F}$-submartingale and from the Doob-Meyer decomposition, there is a unique $\mathbb{F}$-predictable process $\Gamma_t$ such that $\Gamma_0=0$ and $M_t=H_t-\Gamma_t$ is a martingale. Moreover, as $\tau$ is regarded as a default time, it is often assumed in the financial studies that $\Gamma$ is absolutely continuous with respect to the Lebesgue measure. Then there exists an $\mathbb{F}$-predictable process $\left(\gamma_t\right)_{t \leq T}$ called the {\it intensity process} of $H$ such that $\Gamma_t=\int_{0}^{t}\left(1-H_s\right)\gamma_s ds=\int_{0}^{t}\gamma_s ds$ as $\Gamma_t=\Gamma_{t \wedge \tau}$, $t \in [0,T]$. So that the explicit decomposition of the {\it compensated defaultable martingale} $(M_t)_{t \leq T}$ is given by:
\begin{equation}
	M_t=H_t-\int_{0}^{t \wedge \tau}\gamma_s ds=H_t-\int_{0}^{t}\gamma_s ds,\quad t \in  [0,T].
	\label{martingale M}
\end{equation}
To avoid the problem of stability of the martingale property in the new setup, we work under the so called $\mathcal{(H)}$-{\it Hypothesis}  or \textit{Immersion property} in the terminology of credit risk modeling, which plays a fundamental and expansive role in the realm of enlarged filtration \cite{bremaud1978changes,kusuoka1999remark,mansuy2006random}. 

In the stochastic basis $\left(\Omega,\mathcal{F},\mathbb{F},\mathbb{P}\right)$ and under the above consideration, we aim to solve the problem of existence and uniqueness of solution for RBSDEs  associated with a terminal condition $\xi$, a driver (or coefficient) $f$ and a reflecting obstacle $\L:=\left(\L_t\right)_{t \leq T}$ (or barrier) which is not necessary right-continuous. More precisely, we force the process $\L$ to have regulated trajectories, meaning that $\L$ is required only to have finite left and right limits. Additionally, we don't require a strong assumptions on the coefficient $f$ as it is assumed to satisfy a kind of stochastic Lipschitz property. Note that, as the obstacle is no longer right-continuous, the state process (first component of the solution) of our RBSDE hesitates this property as well as the reflection process that pushes the solution of the RBSDE to be greater or equal to $\L$ until he reaches the terminal value $\xi$ at the expiry time $T$. 

BSDEs with default time have been an interesting range in the BSDEs theory due to their close link with credit risks, one of the oldest, most basic, and riskiest financial risks, especially default risks which is our context. In this spirit, Dumitrescu et al. \cite{dumitrescu2018bsdes} have considered this type of BSDEs with a generalized driver involving an optional finite variation process in the setup $\left(\Omega,\mathcal{F},\mathbb{F},\mathbb{P}\right)$ taking into account the aforementioned setting. In \cite{dumitrescu2018bsdes}, the authors show the existence and uniqueness of the solution under some suitable square integrability on the data and a Lipschitz condition on the driver based on a martingale representation property with respect to the Brownian motion $B$ and the martingale $M$ defined by \eqref{martingale M}, which we will mention in a few. Additionally, the authors provide a connection with the nonlinear pricing of European contingent claims in a complete imperfect market model with default where the imperfection stems from the non-linearity of the wealth dynamics. More precisely, the authors in \cite{dumitrescu2018bsdes} consider this pricing problem in  a financial market (studied in \cite{TMB})  described by three assets $(S^0_t,S^1_t,S^2_t)_{t \leq T}$, where the evolution of the prices is given by the following system:
\begin{equation}
	\left\{
	\begin{split}
		dS^0_t&=S^0_t r_t dt,\\
		dS^1_t&=S^1_t\left(\mu^1_t dt+\sigma^1_t dB_t\right),\\
		dS^2_t&=S^2_{t-}\left(\mu^2_t dt+\sigma^2_t dB_t-dM_t\right),
	\end{split}
	\right.
	\label{market}
\end{equation} 
where the parameters $\mu^1$, $\mu^2$, $r$, $\sigma^1$ and $\sigma^2$ are $\mathbb{F}$-predictable processes and assumed to be bounded. In the same setting Dumitrescu et al. \cite{dumitrescu2018american} have considered the pricing and hedging problem for an American option with a right continuous and left limited payoff (RCLL, for short) $\L$ in the same financial market model \eqref{market}. The authors in \cite{dumitrescu2018american} introduce the seller's price of the American option and prove that this price coincides with the value function of an optimal stopping problem with $f$-expectation, which in turn corresponds to the solution of a nonlinear RBSDE with RCLL obstacle $\L$ and a Lipschitz driver $f$. In the same way, the authors consider also the buyer's  price of the American option and characterize it via the solution of a RBSDE with an RCLL obstacle $\L$ (see also \cite{GRIGOROVA2021479} for a related study).

This work deals with a more general set of reflected BSDEs with default time $\tau$ and a stochastic Lipschitz driver $f$, where the barrier $\L$ satisfies a weak regularity condition. In the general case of an obstacle with regulated trajectories, we provide a classical description of the first component of the solution as the value function of a classical optimal stopping problem with linear expectation, as well as a generalization of this characterization to the case of non-linear $f$-expectation or $\mathcal{E}^f$-expectation induced by a classical BSDE with default jump and a stochastic Lipschitz driver $f$ in the complete filtered probability space $\left(\Omega,\mathcal{F},\mathbb{F},\mathbb{P}\right)$. More precisely, for a given $[0,T]$-valued $\mathbb{F}$-stopping time $\sigma$, we characterize the state process of our RBSDE as the value function of the following generalized optimal stopping problem:
\begin{equation}
	V(\sigma):=\esssup_{\eta \in \mathcal{T}_{[\sigma,T]}}\mathcal{E}^f_{\sigma,\eta}(\L_{\eta}),
	\label{OS}
\end{equation}
where $\mathcal{T}_{[\sigma,T]}$ denotes the set of stopping times valued a.s. in $[\sigma,T]$ and $\mathcal{E}^f_{\sigma,\eta}(\cdot)$ denotes the $\mathcal{E}^f$-expectation at time $\sigma$ when the terminal time is $\eta$ associated with a stochastic Lipschitz coefficient $f$. Following the work of Grigorova et al. \cite{Grigorova2017} and the terminology of dynamic risk modeling, we interpret $\L$ as a dynamic financial position allowing for a gain $\L_{\eta}$ at time $\eta \in \mathcal{T}_{[0,T]}$. The risk of the position $L_{\eta}$, at time $\sigma$ where $\sigma \in \mathcal{T}_{[0,\eta]}$, is asserted by $-\mathcal{E}^f_{\sigma,\eta}(L_{\eta})$. The objective here is to stop the process $\L$ in such a way that the risk be minimal. In other word, we are dealing the  optimal stopping problem $\Upsilon(\sigma):=-V(\sigma)$ with $\sigma \in \mathcal{T}_{[0,T]}$, where $V(\cdot)$ is defined by \eqref{OS}. Finally, it should be pointed out that RBSDEs with an irregular obstacle and default time do not correspond to a special case of RBSDEs with a L\'{e}vy process or a Poisson random measure, and as we have previously provided, the handling of this kind of BSDE demands certain specific arguments.

The rest of the paper is organized as follows:  In Section \ref{sec2}, we introduce some notation and provide some preliminary needed through the paper. In Section \ref{sec 3} we study the existence and uniqueness result for RBSDE with default time and irregular obstacle under our assumptions supposed for the data. Moreover, we provide a well-known link with a standard optimal stopping problem. Section \ref{sec 4} gives the comparison theorem for solutions to RBSDEs. In section \ref{sec 5}, we provide a link with an optimal stopping problem with $\mathcal{E}^f$-expectations induced by a standard BSDE with default jump and a stochastic Lipschitz driver $f$.
\section{Preliminaries}\label{sec2}
Consider a fixed finite horizon time $T>0$ and let  $(\Omega, \mathcal{F}, \mathbb{P})$ be a complete probability space equipped with two stochastic processes: a one-dimensional standard Brownian motion $B:=\left(B_t\right)_{t \leq T}$ and a jump process $H$ defined by $H_t=\mathbf{1}_{\{\tau \leq t\}}$ for all $t \in[0, T]$, where $\tau$ is a random time which models a {\it default time}. We assume that this default can appear after any fixed time, that is $\mathbb{P}(\tau \geq t)>0$ for all $t \geq 0$ and that $\mathbb{P}\left(\tau \in \left(0,+\infty\right)\right)=1$. We denote by $\mathbb{F}:=\left(\mathcal{F}_t\right)_{t \leq T}$ the augmented filtration generated by $B$ and $H$, which satisfies the usual conditions of right-continuity and completeness. We also assume that $\mathcal{F}_T=\mathcal{F}$. The equality $X=Y$ between any two processes $(X_t)_{t \leq T}$ and $(Y_t)_{t \leq T}$ must be understood in the indistinguishable sense, meaning that  $\mathbb{P}\left(\omega \in \Omega :X_t(\omega)=Y_t(\omega), \forall t \in  [0,T]\right)=1$. The same significance holds for $X \leq Y$. We denote by $\mathcal{P}$ the predictable $\sigma$-algebra on $\Omega \times [0,T]$, and for $x \in \mathbb{R}$, we remember that $x^{+}=\max(x,0)$ and $x^{-}=\max(-x,0)=-\min(x,0)$.\\
We denote by $\mathcal{T}_{\left[ \gamma_1,\gamma_2\right] }$ the set of $[0,T]$-valued $\mathbb{F}$-stopping times $\gamma$ such that $\gamma_1 \leq \gamma \leq \gamma_2$, a.s. for two $[0,T]$-valued $\mathbb{F}$-stopping times $\gamma_1$ and $\gamma_2$ such that $\gamma_1 \leq \gamma_2$ a.s. To simplify the notation, we omit any dependence on $\omega$ of a given process or random function, and by convention, all stochastic integrals are taken null at time zero. Finally and through the paper, we work under the following condition:\\
\textbf{Hypothesis (H):} We suppose that $\left(B_t\right)_{t \leq T}$ is an $\mathbb{F}$-Brownian motion.

Recall that under hypothesis \textbf{(H)}, the martingale representation property holds in the filtration $\mathbb{F}$ with respect to the Brownian motion $B$ and the compensated defaultable martingale $M$ defined by \eqref{martingale M}. Specifically, it can be stated as follows:
\begin{theorem}[\cite{kusuoka1999remark}]
	For every  $\mathbb{F}$-local martingale $N$, there exists a unique pair $\mathbb{R}$-valued $\mathcal{F}$-predictable process $\left(Z_t,U_t\right)_{t \leq T}$ such that 
	$$
	N_t=N_0+\int_{0}^{t} Z_s dB_s+\int_{0}^{t} U_s d M_s,\quad t \in [0,T].
	$$
	Moreover, if $N$ is square integrable, then 
	$$
	\mathbb{E}\int_{0}^{T}\left( \left| Z_s \right|^2 + \left|U_s\right|^2 \gamma_s\right) ds <\infty.
	$$
	\label{Representation property}
\end{theorem}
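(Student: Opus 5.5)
The plan is to reduce the statement to two classical representation results and then glue them together across the default time $\tau$. The filtration $\mathbb{F}$ is the progressive enlargement of $\mathbb{G}$ by $\tau$. Under hypothesis \textbf{(H)}, every $\mathbb{G}$-martingale remains an $\mathbb{F}$-martingale. On $\mathbb{G}$ we have the Brownian martingale representation, and the single-jump process $H$ is compensated by $\Gamma_t=\int_0^{t\wedge\tau}\gamma_s\,ds$.

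\textbf{Step 1: bounded terminal values.} First treat a square integrable martingale $N_t=\mathbb{E}[N_T\mid\mathcal{F}_t]$, and reduce to $N_T$ of product form. A monotone class argument shows that the random variables $X=G\,h(\tau\wedge T)$, with $G\in L^\infty(\mathcal{G}_T)$ and $h$ bounded Borel, are total in $L^2(\mathcal{F}_T)$. Using the key lemma of progressive enlargement, I will compute $\mathbb{E}[X\mid\mathcal{F}_t]$ explicitly:
\begin{itemize}
\item on $\{\tau\le t\}$ it equals a $\mathcal{G}_t$-measurable expression evaluated at $\tau$;
\item on $\{\tau>t\}$ it equals $\mathbb{E}[X\mathds{1}_{\{\tau>t\}}\mid\mathcal{G}_t]/\mathbb{P}(\tau>t\mid\mathcal{G}_t)$.
\end{itemize}
Under \textbf{(H)} and absolute continuity of $\Gamma$, the Azéma supermartingale $\mathbb{P}(\tau>t\mid\mathcal{G}_t)=e^{-\int_0^t\tilde\gamma_s ds}$ is continuous and decreasing. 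The $\mathcal{G}$-martingale parts are Brownian integrals. Applying Itô's formula for the product of a continuous semimartingale with the one-jump process $1-H$ then gives $dN_t=Z_t\,dB_t+U_t\,dM_t$ with predictable $Z$ and $U$. Here $U_t$ is the jump size of $N$ at $\tau$, in its predictable version.

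\textbf{Step 2: closure and integrability.} Stochastic integrals with respect to $(B,M)$ form a closed subspace of the square integrable martingales. This is because the isometry
$$\mathbb{E}\Big[\Big(\int_0^T Z\,dB+\int_0^T U\,dM\Big)^2\Big]=\mathbb{E}\int_0^T\big(|Z_s|^2+|U_s|^2\gamma_s\big)ds$$
holds, which uses $\langle B,M\rangle=0$ ($M$ is purely discontinuous and $B$ is continuous) and $\langle M\rangle_t=\int_0^t\gamma_s ds$. The representation therefore passes to $L^2$ limits, which also yields the integrability claim. For a general local martingale, localize: $\mathbb{F}$ satisfies the usual conditions and $N$ can be written as a locally square integrable martingale plus a locally integrable variation part whose jumps occur only at $\tau$ (since $B$ is continuous). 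The jump at $\tau$ is absorbed by the $dM$ integral, and consistency on the stochastic intervals gives $(Z,U)$ globally.

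\textbf{Uniqueness and the main obstacle.} Uniqueness follows from the isometry: $Z$ is unique $dt\otimes d\mathbb{P}$-a.e., and $U$ is unique $\gamma_t\,dt\otimes d\mathbb{P}$-a.e. (after $\tau$ one takes $U=0$ by convention). I expect the main obstacle to be the explicit computation in Step 1. It requires the key lemma together with careful use of \textbf{(H)}, to show that the decomposition before default involves no orthogonal $\mathbb{G}$-martingale beyond the Brownian integrals, and to identify the predictable version of the jump $U$. Alternatively, this is exactly the result of Kusuoka \cite{kusuoka1999remark}, which I would invoke directly.
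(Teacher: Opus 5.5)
\paragraph{Comparison with the paper.} The paper gives no proof of this statement; it imports it from Kusuoka \cite{kusuoka1999remark}. Your fallback of invoking that reference is therefore exactly what the paper does. Your main argument is a genuinely different, self-contained route, and in outline it is correct:
\begin{itemize}
\item totality of the products $G\,h(\tau\wedge T)$ in $L^2(\mathcal{F}_T)$;
\item the key lemma, giving $\mathbb{E}[X\mid\mathcal{F}_t]=(1-H_t)Y_t+H_t\,h(\tau)g_t$ with $g_t=\mathbb{E}[G\mid\mathcal{G}_t]$ and $Y$ a continuous $\mathbb{G}$-semimartingale;
\item the product rule, with $Z$ and the predictable coefficient $U_t=h(t)g_t-Y_t$ identified because the continuous finite-variation part must vanish;
\item closure via the isometry, then localization.
\end{itemize}

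Compared with the bare citation, your route shows where each hypothesis enters. \textbf{(H)} is equivalent to immersion here, since every $\mathbb{G}$-martingale is a $dB$-integral. It gives $\mathbb{E}[G\mid\mathcal{F}_t]=\mathbb{E}[G\mid\mathcal{G}_t]$ and $\mathbb{P}(\tau\le s\mid\mathcal{G}_T)=\mathbb{P}(\tau\le s\mid\mathcal{G}_s)$, and it keeps the Brownian integrals $\mathbb{F}$-local martingales. Absolute continuity of $\Gamma$ gives a continuous Az\'ema supermartingale and $\langle M\rangle_t=\int_0^t\gamma_s\,ds$.

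Your route also makes precise the uniqueness that the paper's ``unique pair'' leaves vague. $Z$ is unique $dt\otimes d\mathbb{P}$-a.e., whereas $U$ is unique only $\gamma_t\,dt\otimes d\mathbb{P}$-a.e. The ``main obstacle'' you name is not one: the absence of an orthogonal $\mathbb{G}$-martingale part is immediate, because $\mathbb{G}$ is Brownian.

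\paragraph{The under-justified step.} The passage to general local martingales is not adequately argued. Continuity of $B$ does not by itself show that an arbitrary $\mathbb{F}$-local martingale jumps only on $\llbracket\tau\rrbracket$. Nor does it show that the jump there equals $U_\tau$ for some predictable $U$. You need that to write the large-jump part as $\int U\,dM$ with compensator $\int U\gamma\,ds$. Both facts are true, but they must be derived from the square-integrable case:
\begin{itemize}
\item For $\eta$ bounded and $\mathcal{F}_\tau$-measurable, apply it to $N_t=\mathbb{E}[\eta\mathds{1}_{\{\tau\le T\}}\mid\mathcal{F}_t]$. This gives $\eta=N_{\tau-}+U_\tau$ on $\{\tau\le T\}$, so $\mathcal{F}_\tau=\mathcal{F}_{\tau-}$ there.
\item For a totally inaccessible $S$ with continuous compensator $C$, apply it to the locally bounded martingale $\mathds{1}_{\{S\le t\}}-C_t$. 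This shows $\llbracket S\rrbracket\subset\llbracket\tau\rrbracket$ on $[0,T]$.
\item For a predictable $S$, apply it to $\eta\mathds{1}_{\{S\le t\}}$, with $\eta$ bounded, $\mathcal{F}_S$-measurable and $\mathbb{E}[\eta\mid\mathcal{F}_{S-}]=0$. Since $\tau$ is totally inaccessible, this excludes jumps at predictable times.
\end{itemize}
Alternatively, the standard extension from $\mathcal{H}^2$ to all local martingales through $\mathcal{H}^1$ and Davis' inequality sidesteps these points. Either way this is a routine repair, not a flaw in your approach.
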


\begin{remark}
	Note that as the $\mathbb{F}$-predictable compensator of the RCLL process $H$ is continuous, then the defaultable process $H$ is quasi-left continuous, implying in particular, the quasi-left continuity of the filtration due to the martingale representation theorem \ref{Representation property} and Proposition 10.19  in \cite{Kallemberg}.
	\label{quasi left conti}
\end{remark}

We first introduce the notion of processes with regulated trajectories.
\begin{definition}[Regulated processes]
	\begin{itemize}
		\item A process $\mathcal{X}: \Omega \times [0,T] \rightarrow \mathbb{R}$ is said to be a regulated process if for $\mathbb{P}$-almost all $\omega \in \Omega$, the function $t \mapsto \mathcal{X}_t(\omega)$ has finite right limits at each point $t \in [0,T)$, and finite left limits at each point of $(0,T]$.
		\item For any process $\mathcal{X}: \Omega \times [0,T] \rightarrow \mathbb{R}$ with regulated trajectories, we set
		\begin{itemize}
			\item $\mathcal{X}_{s-}=\lim\limits_{u \nearrow s} \mathcal{X}_u$ the left limit of $\mathcal{X}$ at $s \in ]0,T]$ and $\Delta_{-} \mathcal{X}_{s}=\mathcal{X}_{s}-\mathcal{X}_{s-}$ with the convention $\Delta_{-} \mathcal{X}_{0}=0$.
			\item $\mathcal{X}_{s+}=\lim\limits_{s\swarrow u} \mathcal{X}_u$ the right limit of $\mathcal{X}$ at $s \in [0,T[$ and $\Delta_{+} \mathcal{X}_{s}=\mathcal{X}_{s+}-\mathcal{X}_{s}$ with the convention $\Delta_{+} \mathcal{X}_{T}=0$.
		\end{itemize}
		\item Let $\mathcal{K}: \Omega \times [0,T] \rightarrow \mathbb{R}$ be a finite variation, regulated process, then we write $\mathcal{K}=\mathcal{K}^d+\mathcal{K}^c+\mathcal{K}^g$, where the process $\mathcal{K}^c$ is continuous, the RCLL process $\mathcal{K}^d$ equals $\mathcal{K}^d_t=\sum_{0 < s \leq t}\Delta_{-}\mathcal{K}_{s}$
		and the LCRL process $\mathcal{K}^g$ is given by $\mathcal{K}^g_t=\sum_{0 \leq s < t}\Delta_{+}\mathcal{K}_{s}$. This also means that $\mathcal{K}_t=\mathcal{K}^{\ast}_t+\sum_{0 \leq s < t}\Delta_{+}\mathcal{K}_{s}$ where the RCLL process $\mathcal{K}^{\ast}$ represents the right-continuous part of the process $\mathcal{K}$ satisfying
		$\mathcal{K}^{\ast}=\mathcal{K}-\mathcal{K}^g=\mathcal{K}^c+\mathcal{K}^d$ and $\mathcal{K}^g$ it's purely jumping part.
	\end{itemize}
	\label{regulated processes}
\end{definition}
\begin{remark}
	Note that the trajectories of a process with regulated paths have, at most, countably many discontinuities (refer to Corollary II.2.2 in \cite{dudley2011concrete}).
	\label{Many discontinuity}
\end{remark}

Let $\beta>0$ and $(\alpha_t)_{t \leq T}$ be a non-negative $\mathbb{F}$-adapted process $(\alpha_t)_{t\leq T}$. We define an increasing continuous process $A:=(A_t)_{t\leq T}$ defined as $A_t:=\int_0^t \alpha_s^2 ds$. Subsequently, we define the following spaces, assuming $\beta>0$:
\begin{itemize}
	\item $\mathcal{S}^2$: the space of one-dimensional $\mathbb{F}$-optional increasing processes $(K_t)_{t \leq T}$ with regulated trajectories such that
	$$
	\parallel K \parallel_{\mathcal{S}^2}^2 = \mathbb{E}\left[\esssup_{\eta \in \mathcal{T}_{[0,T]}} \left| K_{\eta} \right|^2 \right]<\infty.
	$$
	\item $\mathcal{H}^2$: the space of one-dimensional $\mathcal{P}$-measurable processes $(Z_t)_{t \leq T}$ such that
	$$
	\parallel Z \parallel_{\mathcal{H}^2}^2 = \mathbb{E}\left[ \int_0^{T}  \left| Z_{s} \right|^2 d s\right]<\infty.
	$$
	\item $\mathbb{L}^2_{\beta}\left(\Omega \times [0,T],\mathcal{P},\gamma_t d \mathbb{P}\otimes dt\right)$: The space of $\mathcal{P}$-measurable $\mathbb{R}$-valued process $(U_t)_{t \leq T}$ such that $\mathbb{E}\int_{0}^{T}e^{\beta\mathcal{A}_s} \left| U_s \right|^2 \gamma_s ds<+\infty$.
	
	\item $\mathcal{M}^2_{\gamma,\beta}:=\mathbb{L}^2_{\beta}\left(\Omega \times [0,T],\mathcal{P},\gamma_t d \mathbb{P}\otimes dt\right)$, equipped with the scalar product $\left\langle U,V \right\rangle_{\gamma,\beta}:=\mathbb{E}\int_{0}^{T} e^{\beta \mathcal{A}_s} U_s V_s \gamma_s ds$, for all $(U_{t})_{t \leq T}$, $(V_t)_{t \leq T}$ in $\mathcal{M}^2_{\gamma,\beta}$. For all $U \in \mathcal{M}^2_{\gamma,\beta}$, we have $$\left\|U\right\|^2_{\mathcal{M}^2_{\gamma,\beta}}:=\mathbb{E}\int_{0}^{T}e^{\beta \mathcal{A}_s} \left|U_s\right|^2 \gamma_s ds.$$ 
	We set by convention $\mathcal{M}^2_{\gamma}:=\mathcal{M}^2_{\gamma,0}$.
	\item $\mathbb{L}^2_{\beta}$: the set of one-dimensional $\mathcal{F}_{T}$-measurable random variables $\xi$ such that
	$$\left\|  \xi \right\|^2_{\beta} =\mathbb{E}\left[e^{\beta A_{T}} \left| \xi \right|^2 \right]<\infty.$$
	\item $\mathcal{S}^2_{\beta}$: the space of one-dimensional $\mathbb{F}$-optional processes $(Y_t)_{t \leq T}$ with regulated trajectories such that
	$$
	\parallel Y \parallel_{\mathcal{S}^2_{\beta}}^2 = \mathbb{E}\left[\esssup_{\eta \in \mathcal{T}_{[0,T]}} e^{\beta A_{\eta}} \left| Y_{\eta} \right|^2 \right]<\infty.
	$$
	\item $\mathcal{S}^{2,\alpha}_{\beta}$: the space of one-dimensional $\mathbb{F}$-optional processes $(Y_t)_{t \leq T}$ such that
	$$
	\parallel Y \parallel_{\mathcal{S}^{2,\alpha}_{\beta}}^2 = \mathbb{E}\left[ \int_0^{T} e^{\beta A_{s}} \left| \alpha_s Y_{s} \right|^2 ds \right]<\infty.
	$$
	\item $\mathcal{H}^2_{\beta}$: the space of one-dimensional $\mathcal{P}$-measurable processes $(Z_t)_{t \leq T}$ such that
	$$
	\parallel Z \parallel_{\mathcal{H}^2_{\beta}}^2 = \mathbb{E}\left[ \int_0^{T} e^{\beta A_{s}} \left| Z_{s} \right|^2 ds\right]<\infty.
	$$
	\item $\mathfrak{B}^2_{\beta}:=\left( \mathcal{S}^2_{\beta} \cap \mathcal{S}^{2,\alpha}_{\beta} \right) \times \mathcal{H}^2_{\beta}  \times \mathcal{M}^2_{\gamma,\beta}$ and $\mathfrak{D}^2_{\beta}:=\left( \mathcal{S}^2_{\beta} \cap \mathcal{S}^{2,\alpha}_{\beta} \right) \times \mathcal{H}^2_{\beta} \times \mathcal{S}^2 \times \mathcal{M}^2_{\gamma,\beta}$.
\end{itemize}
\begin{remark}
	Using the definition of the process $\Gamma$ and the fact that $d\Gamma_{s \wedge \tau}=d \Gamma^{\tau}_s=\mathds{1}_{\{s \leq \tau\}} d\Gamma_s$, we have: 
	\begin{equation*}
		\begin{aligned}
			\mathbb{E}\int_{0}^{\infty} D_s d \Gamma^{\tau}_{s  } = \mathbb{E}\int_{0}^{\infty} D_s \mathds{1}_{\{s \leq \tau\}} d\Gamma_s = \mathbb{E}\int_{0}^{\infty} D_s dH^{\tau}_{s}
		\end{aligned}
	\end{equation*}
	for any $\mathcal{P}$-measurable process $(D_t)_{t \geq 0}$ such that $\mathbb{E}\int_{0}^{\infty}\left| D_s\right|dH_s<\infty$. Therefore, $(\Gamma_{s \wedge \tau})_{t \geq 0}$ is the predictable compensator of $(H^{\tau}_{t})_{t \geq 0}=(H_t)_{t \geq 0}$. By uniqueness, we conclude that $\left(\Gamma_t^{\tau}\right)_{t \geq 0}=(\Gamma_t)_{t \geq 0}$ indistinguishably between processes. On the other hand, using the explicit form of the process $\Gamma$ and the positivity of the intensity $\gamma$, we deduce from $\int_{\tau}^{\infty} \gamma_s ds=0$ that $\gamma_t=0$ $\mathbb{P}$-a.s. on the set $\{t >\tau\}$, i.e., $\gamma$ vanishes after the default occurs.
	\label{Rm zero after default}
\end{remark}

\begin{remark}
	Note that as $\gamma$ vanishes after $\tau$, for any $U \in \mathcal{M}^2_{\gamma,\beta}$ we may write $\left\|U\right\|^2_{\mathcal{M}^2_{\gamma,\beta}}:=\mathbb{E}\int_{0}^{T \wedge \tau}e^{\beta \mathcal{A}_s} \left|U_s\right|^2 \gamma_s ds$, moreover, without loss of generality, we may assume that the two processes $\left( U_t\right) _{t \leq T}$ and $\left( U_t\mathds{1}_{\{t\leq\tau\}}\right) _{t \leq T}$ are indistinguishable.
\end{remark}

\section{Problem presentation}
\label{sec 3}
\subsection{RBSDE with one upper reflecting irregular barrier}
We are interested in finding a quadruplet of processes $\left(Y_t,Z_t,K_t,U_t\right)_{t \leq T}$ that satisfies the following reflected BSDE:
\begin{equation}
	\left\{
	\begin{split}
		\text{(i)} &~Y_t= \xi+\int_t^T f(s,Y_s,Z_s,U_s)ds-(K_T-K_t) -\int_t^T Z_s d B_s -\int_t^T  U_s dM_s,\\
		\text{(ii)} &~ Y_t \leq \zeta_t,~ 0 \leq t \leq T,~\text{a.s.,}\\
		\text{(iii)} &~ \text{Skorokhod conditions:}\\
		&~ \text{If } K^{\ast} \text{ denotes the right-continuous part of } K, \text{ then } K^{\ast} \text{ is } \mathcal{P}\text{-measurable and}\\
		&\text{ } 
		\int_0^T (\zeta_{s-}-Y_{s-})dK^{\ast}_s+\sum_{0 \leq s < T}  (\zeta_{s}-Y_s)\Delta_{+}K_s=0.
	\end{split}
	\right.
	\label{basic equation}
\end{equation}
We introduce the definition of the solution for the RBSDE \eqref{basic equation}.
\begin{definition}
	Let $\beta>0$ and $(\alpha_t)_{t \leq T}$ a non negative $\mathbb{F}$-adapted process. A solution to the reflected BSDE associated with terminal variable $\xi$, coefficient $f$ and upper barrier $\zeta$, is a quintuplet of processes $(Y,Z,K,U)$ which satisfy \eqref{basic equation} and belongs to $\mathfrak{D}^2_{\beta}$.
	\label{Definition of the solution}
\end{definition}

The following standard lemma gives an explicit expressions for the jump parts of the reflection processes $K$, and follows from the Skorokhod condition \eqref{basic equation}-(iii). 
\begin{remark}
	The left and right hand jump of the processes $K^{+}$ and $K^{-}$ has the following form: For all $t \in [0,T]$ 
	$$
	\Delta_{-} K_t=\left(Y_t-\zeta_{t-}\right)^{+}\mathds{1}_{\{Y_{t-}=\zeta_{t-}\}\cap\{\Delta_{-} \zeta_t>0\}}, 
	$$
	and
	$$
	\Delta_{+} K_t=\left(Y_{t+}-\zeta_{t}\right)^{+}\mathds{1}_{\{Y_{t}=\zeta_{t}\}\cap\{\Delta_{+} \zeta_t>0\}}.
	$$
	\label{Reflecting property}
\end{remark}

\textbf{The basic assumptions on the data $\left(\xi,f,\zeta\right)$:}
\begin{itemize}
	\item[] \textbf{(H1)} Terminal variable $\xi$:\\
	$\xi$ is an $\mathcal{F}_{T}$-measurable random variable belonging to $\mathbb{L}^2_{\beta}$.
	
	\item[] \textbf{(H2)} Generator $f$:
	\begin{itemize}
		\item[$\bullet$] $\forall y \in \mathbb{R}$, $z \in \mathbb{R}$ and $u \in \mathbb{R}$, the coefficients  $f\left(\cdot,\cdot,y,z,u\right): \Omega \times [0,T] \rightarrow \mathbb{R}$ is $\mathbb{F}$-progressively measurable.
		\item[$\bullet$] {\it Stochastic Lipschitz condition:} There exists there non-negative $\mathbb{F}$-adapted processes $(\mu_t)_{t \leq T}$, $(\theta_t)_{t \leq T}$ and $(\nu_t)_{t \leq T}$ such that
		\begin{itemize}
			\item[(a)] $d\mathbb{P} \otimes dt$-a.s., for
			each $(y_1, z_1, u_1)$, $(y_2, z_2, u_2) \in \mathbb{R}^3$,
			$$
			\left| f(t,y_1,z_1,u_1)-f(t,y_2,z_2,u_2) \right| \leq \mu_t  \left|y_1-y_2 \right|+\theta_t  \left|z_1-z_2 \right|+\nu_t \gamma_t  \left|u_1-u_2 \right|.
			$$
			\item[(b)] There exists $\epsilon >0$ such that $\alpha_t^2:=\mu_t+\theta^2_t +\nu^2_t \gamma_t\geq \epsilon$. 
		\end{itemize}
		\item[$\bullet$] $\frac{f(\cdot,0,0,0)}{\alpha_{\cdot}} \in \mathcal{H}^2_{\beta}$.
	\end{itemize}
	\item[] \textbf{(H3)} Upper obstacle $\zeta$:
	\begin{itemize}
		\item[$\bullet$] The obstacle $\zeta$ is $\mathbb{F}$-optional.
		\item[$\bullet$] The barrier $\zeta$ is a regulated process such that $\xi \leq \zeta_T$ a.s.
		\item[$\bullet$] $\zeta^{-} \in \mathcal{S}^2_{2 \beta}$.
	\end{itemize}
\end{itemize}
\subsection{Existence and uniqueness result}
In this section, we will establish the existence and uniqueness of solutions for the RBSDE \eqref{basic equation} using a {\it modified penalization method}. The reasoning is divided into two main steps:
\begin{enumerate}
	\item We consider the case where the driver $f$ of the RBSDE \eqref{basic equation} does not depend on $\left(y,z\right)$, i.e., $f(\omega,t,y,z)=:g(\omega,t)$ for any $(t,y,z) \in [0,T] \times \mathbb{R}^2$, $\mathbb{P}$-a.s., and $\dfrac{g}{\alpha} \in \mathcal{H}^2_{\beta}$. Then, we  construct a sequence of approximating penalized equations that converges to the solution of the RBSDE \eqref{basic equation} associated with $\left(\xi,g,\zeta\right)$. 
	
	\item Subsequently, we employ a fixed-point argument with an appropriate mapping in a suitable Banach space to establish the result in the general case.
\end{enumerate}
\subsubsection{Existence and uniqueness result for the case when $f$ does not depend on $(y,z,u)$}
In this section, we will prove the existence and uniqueness of a special case of reflected BSDEs with one irregular barrier. More precisely, we are interested in the upper obstacle reflected BSDE with jumps and regulated trajectories (the case where dealing with a lower barrier is quite similar), which takes the form:
\begin{equation}
	\left\{
	\begin{split}
		\text{(i)}~&Y_t=\xi+\int_{t}^{T}g(s)ds-\left(K_T-K_t\right) - \int_{t}^{T}Z_s dB_s-\int_{t}^{T}U_s dM_s,\quad t \in [0,T].\\
		\text{(ii)}~& Y_t \leq \zeta_t,~\forall t \leq T,~\text{a.s.,}\\
		\text{(iii)}~&\text{ Minimality condition: } \int_{0}^{T}\left(\zeta_{s-}-Y_{s-}\right)dK^{\ast}_s+\sum_{s <T}\left(\zeta_s-Y_s\right)\Delta_{+}K_s=0~\text{ a.s.},
	\end{split}
	\right.
	\label{Reflected BSDE with one barrier}
\end{equation}
where $K=K^{\ast}+\Delta_{+} K$, $\dfrac{g}{\alpha} \in \mathcal{H}^2_{\beta}$, $\xi \in \mathbb{L}^2_{\beta}$ and $\zeta^{-} \in \mathcal{S}^2_{2\beta}$.	
\begin{theorem}
	Assume that \textbf{(H1)}, \textbf{(H2)} and \textbf{(H3)} and hold for a sufficient large $\beta$. Then, the RBSDE (\ref{Reflected BSDE with one barrier}) admits a unique solution $\left(Y_t,Z_t,K_t,U_t\right)_{t \leq T} \in \mathfrak{B}^2_{\beta}\times \mathcal{H}^2_{\beta}\times \mathcal{S}^2 \times \mathcal{M}^2_{\gamma,\beta}$.
	\label{existence and uniqueness for the one reflected}
\end{theorem}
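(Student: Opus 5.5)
The plan is to obtain the solution of \eqref{Reflected BSDE with one barrier} as the decreasing limit of a penalized scheme. The limit is then identified through Mertens' decomposition, since the barrier is only regulated and Doob--Meyer is not available. Uniqueness comes from an a priori estimate based on the Gal'chouk--Lenglart It\^o formula. Because the theorem is stated under \textbf{(H1)}--\textbf{(H3)} with $\beta$ large, I also describe how the same estimate yields a contraction, which extends the result to a driver $f(s,y,z,u)$ satisfying \textbf{(H2)}.

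\emph{Step 1 (penalization and estimates).} For $n\ge1$, let $(Y^n,Z^n,U^n)$ be the unique solution of the non-reflected BSDE with default jump
$$Y^n_t=\xi+\int_t^T g(s)\,ds-n\int_t^T (Y^n_s-\zeta_s)^+ds-\int_t^T Z^n_s dB_s-\int_t^T U^n_s dM_s .$$
Existence and uniqueness follow from the martingale representation Theorem \ref{Representation property} and a standard Picard argument, since the driver is Lipschitz in $y$ and independent of $(z,u)$. Set $K^n_t:=n\int_0^t (Y^n_s-\zeta_s)^+ds$. By the comparison theorem for linear BSDEs with jumps, $Y^n$ is decreasing in $n$. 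Applying It\^o's formula to $e^{\beta A_t}|Y^n_t|^2$ and to its supremum version via Burkholder--Davis--Gundy, and using that $\alpha^2\ge\epsilon$, that $g/\alpha\in\mathcal H^2_\beta$, and that $\zeta^-\in\mathcal S^2_{2\beta}$, I aim for a uniform bound on $\|Y^n\|_{\mathcal S^2_\beta\cap\mathcal S^{2,\alpha}_\beta}$, $\|Z^n\|_{\mathcal H^2_\beta}$, $\|U^n\|_{\mathcal M^2_{\gamma,\beta}}$ and $\mathbb E|K^n_T|^2$. To control $K^n_T$, I express it through the equation and bound it using $Y^n\ge$ (solution of the BSDE reflected on $-\zeta^-$-type data). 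Concretely, $Y^n\ge -\,\mathbb E[\,\cdot\,]$-conditional bounds built from $\zeta^-$, obtained by comparison with the solution of the BSDE whose terminal value is $-|\xi|$ and whose driver is $-|g|$. The weight $e^{2\beta A}$ on $\zeta^-$ is exactly what absorbs the factor coming from $\int e^{\beta A}\alpha^2\,ds$ in these estimates.

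\emph{Step 2 (passage to the limit; main obstacle).} Let $Y:=\lim_n Y^n$, the decreasing limit. The process $\bar Y_t:=Y_t+\int_0^t g(s)ds$ is the limit of the strong submartingales $Y^n+\int_0^\cdot g\,ds$, hence an optional strong submartingale of class (D) with regulated paths. From the uniform bounds I deduce $Y\le\zeta$: first a.e.\ in $t$ from $\mathbb E\int (Y^n-\zeta)^+\,ds\to0$, then for every stopping time by the section theorem, using $Y^n_\sigma\downarrow Y_\sigma$. The hardest point is that $(Y^n)$ converges only monotonically and not uniformly, because $\zeta$ may jump from both sides. Hence $K^n$ does not converge pathwise to a regulated process.

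To get around this, I apply Mertens' decomposition \cite[Theorem A.1]{Grigorova2017} to $\bar Y$. This gives $\bar Y=\bar Y_0+N-K$, where $N$ is a square-integrable martingale and $K=K^\ast+K^g$ is an increasing optional process, with $K^\ast$ predictable and right-continuous. I then use Theorem \ref{Representation property} on $N$ to produce $(Z,U)$. Identifying $Z=\lim Z^n$ and $U=\lim U^n$ is done by weak convergence in $\mathcal H^2_\beta\times\mathcal M^2_{\gamma,\beta}$, which I expect can be upgraded to strong convergence. The Skorokhod condition is the delicate part. For it I would argue separately on each kind of time:
\begin{itemize}
\item on $\{Y_{s-}<\zeta_{s-}\}$ and $\{Y_s<\zeta_s\}$, the process $\bar Y$ behaves locally as a martingale, because $Y^n<\zeta$ there for large $n$, so the penalization vanishes;
\item at a right jump, $\Delta_+K_s=(Y_{s+}-\zeta_s)^+$ on $\{Y_s=\zeta_s\}$, in accordance with Remark \ref{Reflecting property};
\item at totally inaccessible times, i.e.\ the default time $\tau$, there is no contribution, by the quasi-left continuity noted in Remark \ref{quasi left conti}. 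This is what forces $K^\ast$ to be predictable.
\end{itemize}

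\emph{Step 3 (uniqueness, and extension to $f$).} Let $(Y,Z,K,U)$ and $(Y',Z',K',U')$ be two solutions. I apply the Gal'chouk--Lenglart It\^o formula \cite[Theorem A.1]{klimsiak2019reflected} to $e^{\beta A_t}|Y_t-Y'_t|^2$. The cross terms $\int(Y-Y')\,d(K-K')$, including the right-jump sums, are nonpositive. This follows from the minimality condition in \eqref{Reflected BSDE with one barrier} together with $Y,Y'\le\zeta$. The resulting inequality gives $\|Y-Y'\|=\|Z-Z'\|=\|U-U'\|=0$, and then $K=K'$ from the equation. For a general driver $f$, I define $\Phi(y,z,u)=(Y,Z,U)$, where $(Y,Z,K,U)$ solves the problem above with $g(s):=f(s,y_s,z_s,u_s)$. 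By \textbf{(H2)}, $g/\alpha\in\mathcal H^2_\beta$ whenever $(y,z,u)\in\mathfrak B^2_\beta$. The same It\^o estimate, with the stochastic Lipschitz bound and Young's inequality weighted by $\alpha^2$, yields
$$\|\Phi(y,z,u)-\Phi(y',z',u')\|^2_{\mathfrak B^2_\beta}\le \frac{C}{\beta}\,\|(y,z,u)-(y',z',u')\|^2_{\mathfrak B^2_\beta}.$$
For $\beta$ sufficiently large this is a contraction on the Banach space $\mathfrak B^2_\beta$, which gives existence and uniqueness in the required space.
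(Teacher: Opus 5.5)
\textbf{Gap: the classical penalization does not enforce $Y\le\zeta$ at the right jumps of $\zeta$.}
Your Step~2 fails at the claim that $Y\le\zeta$ holds at every stopping time. The term $n\int_t^T(Y^n_s-\zeta_s)^+ds$ sees $\zeta$ only up to $dt$-null sets. The $Y^n$, and hence their limit, are unchanged if $\zeta$ is altered on countably many times, whereas the reflected solution is not. An inequality $Y_t\le\zeta_t$ for a.e.\ $t$ says nothing on graphs of stopping times, since these are $dt\otimes d\mathbb{P}$-null. So the section theorem cannot be invoked, and the conclusion is in fact false for regulated barriers.

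\emph{Counterexample.} Take $g=0$, $\xi=0$, and the deterministic barrier $\zeta_t=1$ for $t\neq t_0$, $\zeta_{t_0}=-1$, with $t_0\in(0,T)$. Then $(Y^n,Z^n,U^n)=(0,0,0)$ solves every penalized equation, so your limit is $Y\equiv0$ with $Y_{t_0}=0>\zeta_{t_0}$. The actual solution is $Y=-1$ on $[0,t_0]$ and $Y=0$ on $(t_0,T]$, with $K$ consisting only of the right jump $\Delta_{+}K_{t_0}=1$. The same failure occurs at any time where $\Delta_{+}\zeta_t>0$: for instance $\zeta=-1$ on $[0,t_0]$ and $\zeta=1$ afterwards gives $Y^n_t=-1+e^{-n(t_0-t)}$ on $[0,t_0]$, so $Y^n_{t_0}=0$ for all $n$.

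So the obstruction is not that the convergence is merely monotone: your limit is a different object. Applying Mertens' decomposition to it cannot recover the constraint, nor the jump relation $\Delta_{+}K_s=(Y_{s+}-\zeta_s)^+$ on $\{Y_s=\zeta_s\}$, because $Y_s$ never reaches $\zeta_s$ at those times.

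\textbf{What is missing, and how to repair it.}
The paper's Step~1 supplies the missing ingredient, following Klimsiak et al.: a \emph{modified} penalization. Stopping times $\rho_{n,i}$, nested in $n$, exhaust the right jumps $\{\Delta_{+}\zeta>1/n\}$, and the term $\sum_{t\le\rho_{n,i}<T}(Y^n_{\rho_{n,i}+}-\zeta_{\rho_{n,i}})^+$ is added to the equation. This enforces $Y^n_{\rho_{n,i}}=Y^n_{\rho_{n,i}+}\wedge\zeta_{\rho_{n,i}}$ exactly. The equation is then solved backward on the intervals $(\rho_{n,i-1},\rho_{n,i}]$ as ordinary BSDEs with default jump, and the right jumps of $K^n$ are what become $\Delta_{+}K$ in the limit. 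In the example above, $\rho_{n,1}=t_0$ and the scheme returns the true solution for every $n$.

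Alternatively, you can keep your Mertens argument but apply it to the right process, namely the value process $\essinf_{\eta\in\mathcal{T}_{[t,T]}}\mathbb{E}[\int_t^\eta g(s)ds+\zeta_\eta\mathds{1}_{\{\eta<T\}}+\xi\mathds{1}_{\{\eta=T\}}\mid\mathcal{F}_t]$. This is the route the paper itself uses in its optimal stopping characterization, citing Klimsiak et al.\ (Proposition~3.10, Corollary~3.11) for the Skorokhod condition with a regulated barrier.

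\textbf{Remaining steps.}
Your Step~3 is the standard argument and is fine: the It\^o formula for regulated processes, nonpositive cross terms from the minimality conditions, and the contraction for a general $f$ (the same computation as the paper's fixed-point step). One small slip: $Y+\int_0^\cdot g\,ds$ is a submartingale, so its Mertens decomposition should read $\bar Y_0+N+K$, not $\bar Y_0+N-K$.
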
	
\begin{proof}
	To make the proof as well-constructed and comprehensible as possible, it is done in four steps, each of which includes several findings and notes.
	\paragraph*{Step 1: Construction of the modified penalization schemes.}
	\emph{}
	
	By inspiring on \cite[Section 4]{klimsiak2019reflected}, we consider approximation of the solution to RBSDE associated with parameters $\left(\xi,g,\zeta\right)$. More precisely, for each $n \geq 1$, we consider the following modified penalization version of BSDEs:
	\begin{equation}
		\begin{split}
			Y^n_t=\xi&+\int_{t}^{T}f(s,Y^n_s)ds-\int_{t}^{T} Z^n_s dB_s-\int_{t}^{T} U^n_sdM_s\\
			&-n\int_{t}^{T} \left(Y^n_s-\zeta_s\right)^{+}ds
			-\sum_{t \leq \rho_{n,i} < T} \left(Y^n_{\rho_{n,i}+}-\zeta_{\rho_{n,i}}\right)^{+},\quad t \in [0,T],
		\end{split}
		\label{one reflected: penalization schemes}
	\end{equation}
	where $\left\{\rho_{n,i}\right\}$ is an array of precisely specified stopping times that exhausts $\zeta$'s right-side jumps, constructed as follows: We start with setting 
	\begin{equation*}
		\left\{
		\begin{split}
			& \rho_{1,0}=0,\\
			& \rho_{1,i}=\inf\left\{t > \rho_{1,i-1}:\Delta_{+}\zeta_t >1 \right\} \wedge T,\quad i=1,2,\cdots,k_1,
		\end{split}
		\right.
	\end{equation*}
	and $\rho_{1,k_1+1}=T$ for some $k_1 \geq 1$. Next, for each $n \geq 1$, and for given array $\left\{\rho_{n,i}\right\}$, we set $\rho_{n+1,0}=0$ and
	$$
	\rho_{n+1,i}=\inf\left\{t > \rho_{n+1,i-1}:\Delta_{+}\xi_t >\frac{1}{n+1} \right\} \wedge T,\quad i=1,2,\cdots,j_{n+1},
	$$
	where the index $j_{n+1}$ is chosen such that $\mathbb{P}\left(\rho_{n+1,j_{n+1}} <T\right) \rightarrow 0$ as $n \rightarrow +\infty$ and
	$$
	\rho_{n+1,j_{n+1}+i}=\rho_{n+1,j_{n+1}} \vee \rho_{n,i},\quad i=1,2,\cdots, k_n,~~ \text{ and }~~ k_{n+1}=j_{n+1}+k_n.
	$$
	Finally, we put $\rho_{n+1,k_{n+1}+1}=T$.
	\begin{remark}
		\begin{enumerate}
			\item Note that, since  $\Delta_{+} \zeta_t >\frac{1}{n}$ implies $\Delta_{+} \zeta_t >\frac{1}{n+1}$, then from the above construction it follows that $$ \bigcup_{j=1}^{k_n} \llbracket \rho_{n,j} \rrbracket \subset \bigcup_{j=1}^{k_{n+1}} \llbracket \rho_{n+1,j} \rrbracket,~ \text{ for each } n \geq 1.$$
			So it is natural to include the stopping times from the previous step in the definition of the ones from the current step.
			\item For each $n \geq 1$, and as a result of the construction, the arrays $\left\{\rho_{n,i}\right\}$ are stopping times, satisfying 
			\begin{equation}
				\left[0,T \right]=\left[0,\rho_{n,1} \right] \cup  \bigcup_{i=2}^{k_n+1} \left(   \rho_{n,i-1},\rho_{n,i} \right]. 
				\label{Stopping time definitions}
			\end{equation}
			\item From the BSDE \eqref{one reflected: penalization schemes}, we can express the right-hand jumps of the state process $Y^n$ as follows: $\Delta_{+} Y^n_{t}=\sum_{t= \rho_{n,i}}\left(Y^n_{\rho_{n,i}+}-\zeta_{Y^n_{\rho_{n,i}}}\right)^{+}$.  Following this, we can write $\Delta_{+} Y^n_{\rho_{n,i}}=\left(Y^n_{\rho_{n,i}+}-\zeta_{\rho_{\rho_{n,i}}}\right)^{+}$. Henceforth, we obtain the following equivalent form $Y^n_{\rho_{n,i}}= Y^n_{\rho_{n,i}+}\wedge \zeta_{\rho_{n,i}}$, for each $n \geq 1$ and all $i \in \{1,2,\cdots,k_n\}$.
		\end{enumerate}
		\label{Ghazali}
	\end{remark}
	
	\subparagraph{Methodical Solution Construction for the BSDE \eqref{one reflected: penalization schemes}}
	Observe that from Remark \ref{Ghazali}-(2)-(3), on each interval $(\rho_{n,i-1},\rho_{n,i}]$, where $i=1,2,\cdots,k_n+1$, the BSDE \eqref{one reflected: penalization schemes} becomes a non-reflected BSDEs of the form:
	\begin{equation}
		\begin{split}
			Y^n_t=& Y^n_{\rho_{n,i}+}\wedge \zeta_{\rho_{n,i}}+\int_{t}^{\rho_{n,i}} g(s)ds-n\int_{t}^{\rho_{n,i}} \left(Y^n_s-\zeta_s\right)^{+}ds\\
			&\qquad-\int_{t}^{\rho_{n,i}} Z^n_s dB_s-\int_{t}^{\rho_{n,i}} U^n_s dM_s,\quad t \in (\rho_{n,i-1},\rho_{n,i}],~i \in \{1,2,\cdots,k_n+1\},
		\end{split}
		\label{Penelization equations local}
	\end{equation}	
	with the convention that $Y^n_{0}= Y^n_{0+}\wedge \zeta_{0}$ and $Y^n_T=\xi \wedge \zeta_T=\xi$. Then, to solve the BSDE (\ref{one reflected: penalization schemes}), we consider it on each subinterval: $[0, \rho_{n,1}], (\rho_{n,1}, \rho_{n,2}], \cdots, (\rho_{n,k_n}, T]$, and then construct the corresponding solution backwardly starting from $(\rho_{n,k_n}, T]$.  To this end, we set $\mathfrak{f}_n(s,y):=g(s)-n \left(y-\zeta_s\right)^{+}$. Moreover, we have $\xi \in \mathbb{L}^2_{\beta}$ and
	\begin{equation*}
		\begin{split}
			&\mathbb{E}\int_{0}^{T} e^{\beta A_s} \left| \frac{\mathfrak{f}_n(s,0)}{\alpha_s}\right|^2 ds
			\leq2\left( \mathbb{E}\int_{0}^{T} e^{\beta A_s} \left| \frac{g(s)}{\alpha_s}\right|^2 ds+ \frac{n^2 T}{\epsilon} \mathbb{E}\esssup_{\eta \in \mathcal{T}_{[0,T]} } e^{2 \beta A_{\eta}}\left| \zeta_{\eta}^{-}\right|^2    \right) .
		\end{split}
	\end{equation*}
	Hence, from Theorem \ref{Existence and uniqueness unconstrained BSDE}, the BSDE \eqref{Penelization equations local} associated with $(\xi,\mathfrak{f}_n)$ has a unique solution $\left(Y^n,Z^n,U^n\right)\in \mathfrak{B}^2_{\beta} \times \mathcal{H}^2_{\beta} \times \mathcal{M}^2_{\gamma,\beta}$. Following an inductive process, we construct a unique solution $\left(Y^n,Z^n,U^n\right)\in \mathfrak{B}^2_{\beta} \times \mathcal{H}^2_{\beta} \times \mathcal{M}^2_{\gamma,\beta}$ to the BSDE \eqref{one reflected: penalization schemes} for each $n \geq 1$. Additionally, the solution may be stated in the following shorter form:
	\begin{equation}
		\begin{split}
			Y^n_t=\xi&+\int_{t}^{T} g(s)ds-\int_{t}^{T}dK^{n}_s-\int_{t}^{T} Z^n_s dB_s-\int_{t}^{T}U^n_s dM_s,\quad t \in [0,T],
		\end{split}
		\label{Short dynamic of Yn}
	\end{equation}
	where $\left(K^{n}_t\right)_{t \leq T}$ is a regulated process with decomposition given as follows:
	\begin{equation}
		K^{n}_t:=K^{n,\ast}_t+\sum_{0 \leq s < t} \Delta_{+} K^{n}_s:=n\int_{t}^{T} \left(Y^n_s-\zeta_s\right)^{+}ds+\sum_{0 \leq \rho_{n,i} < t} \left(Y^n_{\rho_{n,i}+}-\zeta_{\rho_{n,i}} \right)^{+}.
		\label{K-n}
	\end{equation}
	\paragraph{Part 2: Uniform estimation for the sequence $\left\{\left( Y^n,Z^n,K^{+,n},K^{-,n},N^n\right) \right\}_{n \geq 1}$}
	\begin{lemma}
		There exists a positive constant $\mathfrak{C}_{\beta}$ independent of $n$ such that for all $\beta >1$
		\begin{equation*}
			\begin{split}
				&\left\|Y^n \right\|^2_{\mathfrak{B}^2_{\beta}}+\left\|Z^n \right\|^2_{\mathcal{H}^2_{\beta}}+\left\|U^n \right\|^2_{\mathcal{M}^2_{\gamma,\beta}}+\mathbb{E}\left|K^{n}_T \right|^2 \\
				& \leq \mathfrak{c}_{\beta} \left( \mathbb{E}e^{\beta A_T}\left|\xi \right|^2+  \mathbb{E} \esssup_{\eta \in \mathcal{T}_{[0,T]} } e^{2 \beta A_{\eta}} \left| \xi_{\eta}^{+}\right|^2  +  \mathbb{E} \int_{0}^{T} e^{\beta A_s} \left| \dfrac{g(s)}{\alpha_s} \right|^2 ds \right) .
			\end{split}
		\end{equation*}
		\label{lemma of unifrom estimation}
	\end{lemma}
	
	\begin{proof}
		By applying Corollary \ref{Application of Ito formula} to the dynamics of the process $Y^n$ given by equation \eqref{Short dynamic of Yn} with right-continuous part
		$
		Y^{n,\ast}_t=Y_0-\int_{0}^{t}g(s)ds+K^{n,\ast}_t+\int_{0}^{t} Z^n_s dB_s+\int_{0}^{t} U^n_s dM_s
		$ and $\Delta_{+} Y^n = \Delta_{+} K^{n}$, we can derive 
		\begin{equation}
			\begin{split}
				&e^{\beta A_t} \left| Y^n_t \right|^2+\beta \int_{t}^{T} e^{\beta A_s} \left| Y^n_s \right|^2 dA_s+\int_{t}^{T} e^{\beta A_s} \left| Z^n_s \right|^2 ds \\
				&   =e^{\beta A_T} \left| \xi \right|^2+2 \int_{t}^{T} e^{\beta A_s} Y_{s}g(s)ds-2\int_{t}^{T} e^{\beta A_s}Y^n_{s-}dK^{n,\ast}_s\\
				& \qquad -2 \int_{t}^{T} e^{\beta A_s} Y^n_{s}Z^n_sdB_s-2\int_{t}^{T} e^{\beta A_s}Y^n_{s-} U^n_s dM_s -\sum_{t <s \leq T}e^{\beta A_s} \left| \Delta_{-} Y^n_s\right|^2\\
				&\qquad -\sum_{t \leq s < T} e^{\beta A_s} \left|\Delta_{+} Y^n_s \right|^2-2\sum_{t \leq s < T} e^{\beta A_s} Y^n_s \Delta_{+}K^{n}_s.
			\end{split}
			\label{basic Itos formula}
		\end{equation}
		As $H$ is an RCLL quasi-left continuous process (as its has a continuous compensator) and $K^{n}$ are left-continuous adapted processes, then we have $\Delta_{-} K^{n} \Delta_{-} H=0$. Hence
		\begin{equation}
			\sum_{t <s \leq T}e^{\beta A_s} \left| \Delta_{-} Y^n_s\right|^2=\sum_{t <s \leq T}e^{\beta A_s} \left|  \Delta_{-} K^{n}_s\right|^2+\sum_{t <s \leq T}e^{\beta A_s} \left| U^n_s\right|^2  \left| \Delta M_s\right|^2.
			\label{left jump}
		\end{equation}
		On the other hand, we have
		\begin{equation}
			\sum_{t <s \leq T}e^{\beta A_s} \left| U^n_s\right|^2  \left| \Delta M_s\right|^2=\int_{t}^{T}e^{\beta A_s} \left| U^n_s\right|^2  dH_s=\int_{t}^{T}e^{\beta A_s} \left| U^n_s\right|^2 d\left[M\right]_s,
			\label{integral M}
		\end{equation}
		Next, using Holder's inequality and the relation  $2 \sqrt{ab} \leq \epsilon a + \frac{1}{\epsilon} b$ for every $a,b \geq 0$, $\epsilon > 0$, we may drive the following inequality for each $\beta >1$
		\begin{equation}
			\begin{split}
				2 \int_{t}^{T} e^{\beta A_s} Y_{s}g(s)ds& \leq 2 \left(\int_{t}^{T} e^{\beta A_s} \left| Y^n_{s} \right|^2 dA_s\right)^{\frac{1}{2}} \left(\int_{t}^{T} e^{\beta A_s} \left| \dfrac{g(s)}{\alpha_s} \right|^2 ds\right)^{\frac{1}{2}} \\
				& \leq \left(\beta-1\right) \int_{t}^{T} e^{\beta A_s} \left| Y^n_{s} \right|^2 dA_s+\dfrac{1}{\beta-1} \int_{t}^{T} e^{\beta A_s} \left| \dfrac{g(s)}{\alpha_s} \right|^2 ds
			\end{split}
			\label{estimation of Yn and g}
		\end{equation}
		By plugging \eqref{left jump}, \eqref{integral M}, and \eqref{estimation of Yn and g} into \eqref{basic Itos formula}, we get
		\begin{equation}
			\begin{split}
				&e^{\beta A_t} \left| Y^n_t \right|^2+ \int_{t}^{T} e^{\beta A_s} \left| Y^n_s \right|^2 dA_s+\int_{t}^{T} e^{\beta A_s} \left| Z^n_s \right|^2 d s+ \int_{t}^{T}e^{\beta A_s} \left| U^n_s\right|^2 d\left[M\right]_s \\
				&  \leq  e^{\beta A_{T}} \left| \xi\right|^2  +\dfrac{1}{\beta-1} \int_{t}^{T} e^{\beta A_s} \left| \dfrac{g(s)}{\alpha_s} \right|^2 ds-2\int_{t}^{T} e^{\beta A_s}Y^n_{s}dK^{n}_s\\
				&\quad -2 \int_{t}^{T} e^{\beta A_s} Y^n_{s}Z^n_s dB_s-2\int_{t}^{T} e^{\beta A_s}Y^n_{s-}U^n_s dM_s
			\end{split}
			\label{both sides expectation}
		\end{equation}
		
		Now, considering the constructed arrays of stopping times $\left\{\rho_{n,i}\right\}$ using \eqref{Stopping time definitions} and the expression \eqref{K-n}, we obtain
		\begin{equation*}
			\begin{split}
				\int_{0}^{T}e^{\beta A_s}Y^n_{s}dK^{n}_s
				=\sum_{i=1}^{k_n+1} \int_{\rho_{n,i-1}}^{\rho_{n,i}} e^{\beta A_s}Y^n_{s}dK^{n}_s
				\geq -\int_{0}^{T}e^{\beta A_s}\zeta^{-}_s dK^{n}_s
			\end{split}
		\end{equation*}
		
			%
		Using this together with the inequality $2 ab \leq \epsilon a^2 +\frac{1}{\epsilon} b^2$, $\forall \epsilon>0$, we get
		\begin{equation}
			-2\int_{t}^{T} e^{\beta A_s}Y^n_{s}dK^{n}_s \leq \epsilon \esssup_{\eta \in \mathcal{T}_{[0,T]}} e^{2 \beta A_{\eta}}\left| \zeta_{\eta}^{-} \right|^2+\frac{1}{\epsilon}\left| K^n_T \right|^2.  
			\label{Square down}
		\end{equation}
		
		Next, we need to state the following proposition:
		\begin{proposition}
			The stochastic integral $\mathcal{W}:=\big(\int_{0}^{t}e^{\beta A_s} \big\{Y^n_{s} Z^n_sdB_s+Y^n_{s-}U^n_sdM_s\big\} \big)_{t \leq T}$ is a uniformly integrable $\mathbb{F}$-martingale with zero expectation.
			\label{Martingale Proposition}
		\end{proposition}
		\begin{proof}
			First, using the left-continuity of the process $\left(e^{\beta A_t}Y_{t-}\right)_{t \in [0,T]}$, we get
			$$
			\sup_{s \in [0,T]}  e^{\beta A_s}\left|Y^n_{s-} \right|^2 = \sup_{s \in \mathbb{Q} \cap [0,T]}  e^{\beta A_s}\left|Y^n_{s-} \right|^2 \leq \esssup_{\eta \in \mathcal{T}_{[0,T]}} e^{\beta A_{\eta}}\left|Y^n_{\eta} \right|^2 \qquad \text{a.s.}
			$$
			Next, notice that, since $(M_t)_{t \leq T}$ is a finite variation martingale, and using the definition of the defaultable process $(H_t)_{t \leq T}$, we get $d\left[M,M\right]_s=d\left[M\right]_s=\left(\Delta H_s\right)^2=\Delta H_s=dH_s$, hence: For all $\nu \in \mathcal{T}_{[0,T]}$, we have
			\begin{equation*}
				\begin{split}
					&\mathbb{E}\left[\sqrt{\int_{0}^{\nu} e^{2 \beta A_s} \left| Y^{n}_{s-} \right|^2 \left| U^n_s\right|^2 d \left[M \right]_s}\right]\\ &\leq \mathbb{E}\left[\sqrt{\esssup_{\eta \in \mathcal{T}_{[0,T]}} e^{\beta A_{\eta}}\left|Y^n_{\eta} \right|^2 \int_{0}^{\nu} e^{ \beta A_s} \left| U^n_s\right|^2 d \left[M \right]_s}\right]\\
					&\leq\dfrac{1}{2} \mathbb{E}\left[\esssup_{\tau \in \mathcal{T}_{[0,T]}} e^{\beta A_{\tau}}\left|Y^n_{\tau} \right|^2 \right]
					+\dfrac{1}{2}\mathbb{E}\left[\int_{0}^{T} e^{ \beta A_s} \left| U^n_s\right|^2 \gamma_s ds \right]=\dfrac{1}{2} \left\| Y^n \right\|^2_{\mathcal{S}^2_{\beta}} +\dfrac{1}{2} \left\| U^n \right\|^2_{\mathcal{M}^2_{\gamma,\beta}}.
				\end{split}
			\end{equation*}
			Similarly, we can obtain,
			\begin{equation*}
				\begin{split}
					\mathbb{E}\left[ \sqrt{\int_{0}^{\nu} e^{2 \beta A_s} \left| Y^{n}_{s} \right|^2\left| Z^n_s\right|^2  ds}\right]  \leq \dfrac{1}{2} \left\| Y^n \right\|^2_{\mathcal{S}^2_{\beta}} +\dfrac{1}{2} \left\| Z^n \right\|^2_{\mathcal{H}^2_{\beta}}.
				\end{split}
			\end{equation*}
			Then, the claim is obtained using the right-continuity of the processes $\mathcal{W}$, the Burkholder-Davis-Gundy's inequality, and Theorem I.51 on page 38 in \cite{Protter}.
		\end{proof}
	
	After returning to \eqref{both sides expectation}, taking expectation on it's both sides at $t = 0$, using \eqref{Square down} and the result of  Proposition \ref{Martingale Proposition}, we obtain
	\begin{equation}
		\begin{split}
			&\mathbb{E} \int_{0}^{T} e^{\beta A_s} \left| Y^n_s \right|^2 dA_s+\mathbb{E} \int_{0}^{T}e^{\beta A_s} \left| Z^n_s \right|^2 d s+\mathbb{E} \int_{0}^{T} e^{\beta A_s} \left| U^n_s\right|^2  \gamma_s ds  \\
			&  \leq  \mathbb{E}  e^{\beta A_{T}} \left| \xi\right|^2  +\dfrac{1}{\beta-1} \mathbb{E} \int_{0}^{T} e^{\beta A_s} \left| \dfrac{g(s)}{\alpha_s} \right|^2 ds
			+\epsilon\mathbb{E} \esssup_{\eta \in \mathcal{T}_{[0,T]}} e^{2 \beta A_{\eta}}\left| \zeta_{\eta}^{-} \right|^2+\dfrac{1}{\epsilon}\mathbb{E}\left| K^n_T \right|^2.
		\end{split}
		\label{The view of this}
	\end{equation}
	Writing equation \eqref{Short dynamic of Yn} forwardly, then squaring, using Holder's inequality and taking the expectation, we obtain
	\begin{equation}
		\begin{split}
			\mathbb{E}\left| K^n_T \right|^2 & \leq 5 \left(\left|Y^n_0 \right|^2+\mathbb{E}e^{\beta A_T}\left|\xi \right|^2+\dfrac{1}{\beta}  \mathbb{E} \int_{0}^{T} e^{\beta A_s} \left| \dfrac{g(s)}{\alpha_s} \right|^2 ds\right.\\
			&\qquad\qquad\left.+\mathbb{E} \int_{0}^{T}e^{\beta A_s} \left| Z^n_s \right|^2 d s+\mathbb{E} \int_{0}^{T} e^{\beta A_s} \left| U^n_s\right|^2  \gamma_s ds\right)
		\end{split}
		\label{Kn est}
	\end{equation}
	Following this, choosing $\epsilon>5$ and plugging this into \eqref{The view of this}, we derive the existence of a constant $\mathfrak{c}_{\beta}>0$ that depends only on $\beta$ such that 
	\begin{equation*}
		\begin{split}
			&\mathbb{E} \int_{0}^{T} e^{\beta A_s} \left| Y^n_s \right|^2 dA_s+\mathbb{E} \int_{0}^{T}e^{\beta A_s} \left| Z^n_s \right|^2 d s+\mathbb{E} \int_{0}^{T} e^{\beta A_s} \left| U^n_s\right|^2  \gamma_s ds  \\
			&  \leq \mathfrak{c}_{\beta}\left(  \mathbb{E}  e^{\beta A_{T}} \left| \xi\right|^2  + \mathbb{E} \int_{0}^{T} e^{\beta A_s} \left| \dfrac{g(s)}{\alpha_s} \right|^2 ds
			+\mathbb{E} \esssup_{\eta \in \mathcal{T}_{[0,T]}} e^{2 \beta A_{\eta}}\left| \zeta_{\eta}^{-} \right|^2\right). 
		\end{split}
	\end{equation*}
	Therefore, from this and estimation \eqref{Kn est}, we derive  
	\begin{equation}
		\begin{split}
			&\mathbb{E} \int_{0}^{T} e^{\beta A_s} \left| Y^n_s \right|^2 dA_s+\mathbb{E} \int_{0}^{T}e^{\beta A_s} \left| Z^n_s \right|^2 d s+\mathbb{E} \int_{0}^{T} e^{\beta A_s} \left| U^n_s\right|^2  \gamma_s ds+\mathbb{E}\left| K^n_T \right|^2  \\
			&  \leq \mathfrak{c}_{\beta}\left(  \mathbb{E}  e^{\beta A_{T}} \left| \xi\right|^2  + \mathbb{E} \int_{0}^{T} e^{\beta A_s} \left| \dfrac{g(s)}{\alpha_s} \right|^2 ds
			+\mathbb{E} \esssup_{\eta \in \mathcal{T}_{[0,T]}} e^{2 \beta A_{\eta}}\left| \zeta_{\eta}^{-} \right|^2\right). 
		\end{split}
		\label{First unif est}
	\end{equation}
	
	It remains to prove the uniform estimation for the sequence of random variables $\left\{\esssup_{\eta \in \mathcal{T}_{[0,T]} }e^{\beta A_{\eta}}\left|Y^n_{\eta} \right|^2 \right\}_{n \geq 1}$. To do so, we back to \eqref{both sides expectation}, then using \eqref{Square down}, and a similar computations as the one used in the proof of Proposition \ref{Martingale Proposition} along with \eqref{First unif est}, we obtain
	\begin{equation*}
		\begin{split}
			\mathbb{E}\esssup_{\eta \in \mathcal{T}_{[0,T]}} e^{\beta A_{\eta}} \left| Y^n_{\eta} \right|^2
			& \leq  \mathfrak{c}_{\beta}\left(  \mathbb{E}  e^{\beta A_{T}} \left| \xi\right|^2  + \mathbb{E} \int_{0}^{T} e^{\beta A_s} \left| \dfrac{g(s)}{\alpha_s} \right|^2 ds
			+\mathbb{E} \esssup_{\eta \in \mathcal{T}_{[0,T]}} e^{2 \beta A_{\eta}}\left| \zeta_{\eta}^{-} \right|^2\right). 
		\end{split}
	\end{equation*}
	Then the proof of Lemma \ref{lemma of unifrom estimation} is complete.
\end{proof}
\paragraph{Step 3: Convergence of the sequence $\left\{Y^n,Z^n,U^{n}\right\}_{n \geq 1}$ in $\mathfrak{B}^2_{\beta} \times \mathcal{H}^2_{\beta} \times \mathcal{M}^2_{\gamma,\beta}$ to the limiting process $\left(Y,Z,U\right)$.}
\begin{itemize}
	\item \textit{Stage 1: There exists an $\mathbb{F}$-optional process $Y:=(Y_t)_{t \leq T}$ with regulated trajectories such that $Y\in \mathcal{S}^2_{\beta}$ and $Y^n \searrow Y$ on $[0,T]$.}\\
	From \eqref{Penelization equations local} and since $\zeta_{\rho_{n,k_n+1}} \wedge Y^n_{\rho_{n,k_n+1}+}=\zeta_T\wedge Y^n_{T}=\zeta_T\wedge \zeta_T=\zeta_T$ and $\mathfrak{f}_{n+1}(s,y) \leq \mathfrak{f}_{n}(s,y)$ for any $s \in [0,T]$, a.s for all $y \in \mathbb{R}$ and due to the fact that $Y^n$ has RCLL paths on each subinterval $(\rho_{n,k-1},\rho_{n,k}]$, we can apply Theorem \ref{Comparison theorem} and Remark \ref{Comparison BSDE} starting from $(\rho_{n,k_n},T]$ to deduce that $Y^{n}_t \geq Y^{n+1}_t$ for all $t \in (\rho_{n,k_n},T]$. Similarly, using the same comparison principle, we find that $\zeta_{\rho_{n,k}} \wedge Y^{n}_{\rho_{n,k}+} \geq \zeta_{\rho_{n,k}} \wedge Y^{n+1}_{\rho_{n,k}+}$, and consequently, $Y^{n}_t \geq Y^{n+1}_t$ for all $t \in (\rho_{n,k_n-1},\rho_{n,k_n}]$. By repeating this process on each subinterval, the fact that $Y^{n}_{0}=\zeta_0 \vee Y^{n}_{0+} \geq \zeta_0 \vee Y^{n}_{0+}=Y^{n+1}_0$ by convention, and considering the construction of the solution $(Y^n_t)_{t \leq T}$ presented in the first step of the current proof, we can conclude that $Y^n_t \geq Y^{n+1}_t$ for all $t \in [0,T]$. Additionally, note that, this comparison result can also be obtained using BSDE \eqref{one reflected: penalization schemes}, Remark \ref{Ghazali}-(1) and  Proposition 3.3 in \cite{klimsiak2019reflected}. Hence, there exists an $\mathbb{F}$-optional process $(Y_t)_{t \leq T}$ such that $Y^n_t \searrow Y_t$, $\forall t \in [0,T]$ a.s. Moreover,  thanks to the monotonic limit theorem for regulated processes (see for instance \cite[Theorem 2.10]{klimsiak2019reflected}) the limit process Y has regulated trajectories. Now, by employing Fatou's lemma and the uniform estimate presented in Lemma \ref{lemma of unifrom estimation}, we get
	\begin{equation*}
		\begin{split}
			&\mathbb{E}\esssup_{\eta \in \mathcal{T}_{[0,T]}} e^{\beta A_{\eta}} \left| Y_{\eta}\right|^2 \\
			&\leq \liminf_{n\rightarrow \infty} \mathbb{E}\esssup_{\eta \in \mathcal{T}_{[0,T]}} e^{\eta A_{\eta}} \left| Y^{n}_{\eta}\right|^2   \\
			& \leq \mathfrak{c}_{\beta} \left( \mathbb{E} e^{\beta A_T}\left|\zeta_T \right|^2  + \mathbb{E} \int_{0}^{T} e^{\beta A_s} \left| \dfrac{g(s)}{\alpha_s} \right|^2 ds+  \mathbb{E}  \esssup_{\eta \in \mathcal{T}_{[0,T]} } e^{2 \beta A_{\eta}} \left| \zeta_{\eta}^{-}\right|^2  \right).
		\end{split}
	\end{equation*}
	\item \textit{Stage 2: Cauchy property of the sequence $\{Z^n,U^n\}_{n \geq 1}$.}
	
	For each $n > p \geq 0$, from \eqref{one reflected: penalization schemes}, we have the following dynamics for the process $Y^n-Y^p$,
	\begin{equation}
		d\left(Y^n_s-Y^p_s\right)=d\left(K^{n}_s-K^{p}_s\right)+\left(Z^n_s-Z^p_s\right)dB_s+\left(U^n_s-U^p_s\right) dM_s,~ Y^n_T-Y^p_T=0.
		\label{dynamics of Yn-Yp}
	\end{equation}
	Then, Theorem \ref{Ito's formula Theorem} and the expression \eqref{integral M} implies that
	\begin{equation}
		\begin{split}
			&\left| Y^n_t-Y^p_t \right|^2+\int_{t}^{T}  \left| Z^n_s-Z^p_s \right|^2 ds+\int_{t}^{T}e^{\beta A_s}  \left|  U^n_s-U^p_s\right|^2 d\left[M\right]_s  \\
			&   =-2\int_{t}^{T} \left( Y^n_{s-}-Y^p_{s-} \right)  d\left(  K^{n,\ast}_s- K^{p,\ast}_s\right)-2 \int_{t}^{T}  \left( Y^n_{s}-Y^p_{s}\right) \left( Z^n_s- Z^p_s\right) dB_s \\
			& \quad -2\int_{t}^{T} \left( Y^n_{s-}-Y^p_{s-}\right) \left( U^n_s-U^p_s\right)dM_s -\sum_{t \leq s < T}  \left|\Delta_{+} \left( Y^n_s-Y^p_s\right)  \right|^2 \\
			&\quad-2\sum_{t \leq s < T}\left(  Y^n_s-Y^p_s \right)  \Delta_{+}\left( K^{n}_s-K^{p}_s\right).
		\end{split}
		\label{basic Itos formula for the convergence of the triplet}
	\end{equation}
	By considering the fact that the process $\left(\Delta_{+} K_t\right)_{t \in [0,T]}$ is $\mathbb{F}$-predictable, and the jumping times of the process $\left( \Delta_{-}\left( Y^n_t-Y^p_t\right)\right)_{t \in [0,T]}$ are  totally inaccessible stopping times, we can write $\left( Y^n_s-Y^p_s\right)\Delta_{+}K^{q}_s=\left( Y^n_{s-}-Y^p_{s-}\right)\Delta_{+}K^{q}_s$, for all $s \in [0,T]$, and each $q \in  \{n,p\}$. After that, using this formula into equation \eqref{basic Itos formula for the convergence of the triplet}, we get
	\begin{equation}
		\begin{split}
			&\left| Y^n_t-Y^p_t \right|^2+\int_{t}^{T}  \left| Z^n_s-Z^p_s \right|^2 ds+\int_{t}^{T}   \left| U^n_s-U^p_s\right|^2 d \left[M \right]_s   \\
			&  \leq -2\int_{t}^{T} \left( Y^n_{s-}-Y^p_{s-} \right)  d\left(  K^{n}_s- K^{p}_s\right)-2 \int_{t}^{T}  \left( Y^n_{s}-Y^p_{s}\right) \left( Z^n_s- Z^p_s\right) dB_s \\
			& \quad -2\int_{t}^{T} \left( Y^n_{s-}-Y^p_{s-}\right)\left(U^n_s-U^p_s\right) dM_s
		\end{split}
		\label{Need some clarifications}
	\end{equation}
	Now, to control Stieltjes integrals appearing on the right-hand side of \eqref{Need some clarifications}, we state following auxiliary result:
	\begin{lemma}
		For each $n \geq 1$, and for any $t \in [0,T]$, we have
		$$
		\int_{t}^{T} \left(Y^n_{s-}-\zeta_{s-}\right)dK^{n}_s=\int_{t}^{T}\left(Y^n_s-\zeta_s\right)dK^{n,\ast}_s+\sum_{t \leq s < T}\left(Y^n_s-\zeta_s\right)\Delta_{+}K^{n}_s \geq 0,~\text{a.s.}
		$$
		\label{result needed for majoration}
	\end{lemma}
	\begin{proof}
		From the definition of the process $\left(\Delta_{+} K^n_t\right)_{t \in [0,T]}$, we have
		\begin{equation}
			\sum_{t \leq s < T}\left(Y^n_s-\zeta
			_s\right)\Delta_{+}K^{n}_s=\sum_{t \leq \sigma_{n,i} < T} \left(Y^n_{\rho_{n,i}}-\zeta_{\rho_{n,i}}\right)\left(Y^n_{\rho_{n,i}+}-\zeta_{\rho_{n,i}} \right)^{+}.
			\label{in virtue of this equation}
		\end{equation}
		Now, let $n \in \mathbb{N}$ be fixed and assume that there exists an index $i \in \{1,2,\cdots,k_n\}$ such that $t \leq \sigma_{n,i} < T$ and $\left(Y^n_{\rho_{n,i}}-\zeta_{\rho_{n,i}}\right)\left(Y^n_{\rho_{n,i}+}-\zeta_{\rho_{n,i}} \right)^{+}<0$. Therefore, we necessarily have $Y_{\rho_{n,i}}<\zeta_{\rho_{n,i}}$ and $Y^n_{\sigma_{n,i}+}>\zeta_{\rho_{n,i}}$. Moreover, from \eqref{one reflected: penalization schemes}, we have  $\Delta_{+}Y^n_{\rho_{n,i}}=\Delta_{+} K^{n}_{\rho_{n,i}}$. Hence, $Y^n_{\rho_{n,i}}=\zeta_{\rho_{n,i}}$, which leads to a contradiction. Consequently, for every $i \in \{1,2,\cdots,k_n\}$, we have $\big(\zeta_{\rho_{n,i}}-Y_{\rho_{n,i}}\big)\left(Y^n_{\rho_{n,i}+}-\xi_{\rho_{n,i}} \right)^{-}\geq 0$. This inequality, in particular, yields  $\sum_{t \leq s < T}\left(Y^n_s-\zeta_s\right)\Delta_{+}K^{n}_s \geq 0$ by virtue of equality (\ref{in virtue of this equation}). On the other hand, we have
		$$
		\int_{t}^{T}\left(Y^n_{s}-\zeta_s\right)dK^{n,\ast}_s=n\int_{t}^{T}\left(Y^n_{s}-\xi_s\right)\left(Y^n_{s}-\zeta_s\right)^{+}ds=n\int_{t}^{T}\left(  \left(Y^n_{s}-\zeta_s\right)^{+}\right)^2 ds \geq 0.
		$$
		Hence, we deduce that $\int_{t}^{T} \left(Y^n_{s-}-\zeta_{s-}\right)dK^{n}_s \geq 0$, for every $n \geq 1$.
	\end{proof}

	On the other hand, using the left continuity of the process  $\left( \zeta_{t-}-Y^j_{t-}\right)_{t \leq T}$ for $j \in \{n,p\}$, along with the inequalities
	$$
	\sup_{s \in [0,T]}  \left( Y^n_{s-}-\zeta_{s-} \right)^{+}=\sup_{s \in [0,T] \cap \mathbb{Q}}  \left( Y^n_{s-}-\zeta_{s-} \right)^{+}   \leq \esssup_{\eta \in \mathcal{T}_{[0,T]}} \left( Y^n_{\eta}-\xi_{\eta} \right)^{+},$$
	together wit the result of Lemmas \ref{lemma of unifrom estimation}, \ref{result needed for majoration}, and the Cauchy-Schwarz inequality, it follows that
	\begin{equation}
		\begin{split}
			&\mathbb{E}\int_{0}^{T}  \left| Z^n_s-Z^p_s \right|^2 d s+\mathbb{E}\int_{0}^{T}    \left| U^n_s-U^p_s\right|^2 \gamma_s ds  \\
			&  \leq \mathfrak{c}_{\beta}\left\lbrace \left( \mathbb{E}  \esssup_{\eta \in \mathcal{T}_{[0,T]}} \left| \left( Y^n_{\eta}-\zeta_{\eta} \right)^{+}\right|^2  \right)^{\frac{1}{2}}  +\left( \mathbb{E} \esssup_{\eta \in \mathcal{T}_{[0,T]}} \left|\left( Y^p_{\eta}-\zeta_{\eta} \right)^{+}\right|^2\right)^{\frac{1}{2}}  \right\rbrace .
		\end{split}
		\label{passing to the limit in n,p to infinity}
	\end{equation}
	
	Now, we have to demonstrate the subsequent achievement:
	\begin{lemma}
		$$
		\lim\limits_{n \to +\infty}\mathbb{E}   \esssup_{\eta \in \mathcal{T}_{[0,T]}} \left| \left( Y^n_{\eta}-\zeta_{\eta} \right)^{+}\right|^2   =0.
		$$
		\label{Lemma barriers convergences}
	\end{lemma}
\begin{proof}
	let the triplet $\left(\hat{Y}^n,\hat{Z}^n,\hat{U}^n\right)$ be the solution of the following classical BSDE:
	$$
	\hat{Y}^n_t=\xi +\int_{t}^{T}g(s)ds-n\int_{t}^{T} \left(\hat{Y}^n_s-\zeta_s\right)ds-\int_{t}^{T} \hat{Z}^n_s dM_s-\int_{t}^{T}\hat{U}^n_s dM_s,\quad t \in [0,T].
	$$
	Since $-\left(y-\zeta_s\right)=-\left(y-\zeta_s\right)^{+}+\left(y-\zeta_s\right)^{-}\geq- \left(y-\zeta_s\right)^{+}$. Then, from the comparison Theorem \ref{Comparison for BSDEs}-(i), we deduce that $\hat{Y}^n_t \geq Y^n_t$ for all $t \leq T$. Next, for any $\eta \in \mathcal{T}_{[0,T]}$, an integration by part formula leads to
	\begin{equation}
		\hat{Y}^n_{\eta}=\mathbb{E}\left[e^{-n(T-\eta)}\xi+\int_{\eta}^{T}e^{-n(s-\eta)}g(s)ds+n\int_{\eta}^{T}e^{-n(s-\eta)}\zeta_s ds \mid \mathcal{F}_{\eta}\right].
		\label{representation of the process hat Y}
	\end{equation}
	It is clear that,
	$$
	e^{-n(T-\eta)}\xi+n\int_{\nu}^{T}e^{-n(s-\eta)}\xi_s ds \xrightarrow[n \rightarrow +\infty]{}\xi\mathds{1}_{\{\eta=T\}}+\xi_{\eta}\mathds{1}_{\{\eta<T\}},\quad\mathbb{P}\text{-a.s.} \text{ and in } \mathbb{L}^2.
	$$
	In addition, by Holder's inequality, we obtain
	\begin{equation}
		\left|\int_{\eta}^{T}e^{-n(s-\eta)} g(s) ds  \right|^2 \leq  \left(\int_{\eta}^{T} e^{\beta A_s}\left|\dfrac{g(s)}{\alpha_s}\right|^2 ds \right)\left(\int_{\eta}^{T} e^{-2n(s-\eta)-\beta A_s} dA_s\right).
		\label{Majoration with Mahjoub}
	\end{equation}
	Thus $
	\mathbb{E} \left[ \int_{\eta}^{T} e^{-n(s-\eta)} g(s)ds \mid \mathcal{F}_{\eta} \right] \xrightarrow[n \rightarrow \infty]{} 0$, $\mathbb{P}$\text{-a.s.} Now, we define
	$$
	\hat{y}^n_t:=e^{-n(T-t)}\xi+\int_{t}^{T} e^{-n(s-t)} g(s)ds+n\int_{t}^{T}e^{-n(s-t)}\zeta_sds,\quad 0 \leq t \leq  T.
	$$
	From (\ref{representation of the process hat Y}), the definition above and the cross-section theorem, it's clear that $\hat{Y}^n_t-\zeta_t=\mathbb{E}\left(\hat{y}^n_t-\zeta_t \mid \mathcal{F}_t\right)$, $\forall t \in [0,T]$. Following this and using Jensen's inequality, Doob's maximal quadratic inequality (see  \cite[Theorem 1.43, Page 11]{jacod2013limit}), Theorem I.9 in \cite{Protter}, and Remark A.1 in \cite{Grigorova2017}, we have
	\begin{equation}
		\begin{split}
			\mathbb{E}\esssup_{\eta \in \mathcal{T}_{[0,T]}}\left| \left(\hat{Y}^n_{\eta}-\zeta_{\eta}\right)^{+}\right|^2 
			&\leq \mathbb{E}\esssup_{\eta \in \mathcal{T}_{[0,T]}} \left|\mathbb{E}\left[\left(  \hat{y}^n_{\eta}-\zeta_{\eta}\right)^{+}\mid \mathcal{F}_{\eta}\right]\right|^2   \\
			&\leq \mathbb{E}\esssup_{\eta \in \mathcal{T}_{[0,T]}} \left|\mathbb{E}\left[\esssup_{\eta \in \mathcal{T}_{[0,T]}}\left(  \hat{y}^n_{\eta}-\zeta_{\eta}\right)^{+} \mid \mathcal{F}_{\eta}\right]\right|^2   \\
			& =\mathbb{E}\sup_{t \in [0,T]} \left|\mathbb{E}\left[\esssup_{\eta \in \mathcal{T}_{[0,T]}}\left(  \hat{y}^n_{\eta}-\zeta_{\eta}\right)^{+}\mid \mathcal{F}_{t}\right]\right|^2   \\
			&\leq 4\mathbb{E}\esssup_{\eta \in \mathcal{T}_{[0,T]}}\left|\left(  \hat{y}^n_{\eta}-\zeta_{\eta}\right)^{+}\right|^2 ,
		\end{split}
		\label{plugging this to get 0 in xi}
	\end{equation}
	Meanwhile, it is not hard to show that the sequence $\left\lbrace X^n\right\rbrace_{n \geq  1}$ defines as
	\begin{equation*}
		X^n_t:=e^{-n(T-t)}\xi +n\int_{t}^{T}e^{-n(s-t)}\zeta_s ds-\zeta_t,\quad 0 \leq t \leq T,
	\end{equation*}
	is uniformly convergent in $t$ to $0$, $\mathbb{P}$-a.s. In particular, this convergence holds also for  $\left\lbrace (X^{n})^+\right\rbrace_{n \geq  1}$. Next, using \eqref{plugging this to get 0 in xi}, the fact that $\hat{y}^n_t-\zeta_t=X^n_t+\int_{t}^{T} e^{-n(s-t)} g(s)ds$, the basic inequality $(a+b)^{+} \leq a^{+}+\left| b\right|$, $\forall a,b \in \mathbb{R}$ , and the Lebesgue dominated convergence theorem, indicates that
	\begin{equation*}
		\begin{split}
			&\lim\limits_{n \rightarrow +\infty} \mathbb{E}\esssup_{\eta \in \mathcal{T}_{[0,T]}} \left|\left(\hat{Y}^n_{\eta}-\zeta_{\eta} \right)^{+} \right|^2\\
			&\leq 4\lim\limits_{n \rightarrow +\infty} \mathbb{E}\esssup_{\eta \in \mathcal{T}_{[0,T]}} \left|\left(\hat{y}^n_{\eta}-\zeta_{\eta} \right)^{+} \right|^2\\
			& \leq 8 \left( \lim\limits_{n \rightarrow +\infty} \mathbb{E}\esssup_{\eta \in \mathcal{T}_{[0,T]}} \left|\left(X^n_{\eta} \right)^{+} \right|^2+\lim\limits_{n \rightarrow +\infty} \mathbb{E}\sup_{t \in [0,T]} \left| \int_{t}^{T}e^{-n(s-t)} g(s) ds\right|^2 \right) \xrightarrow[n \rightarrow +\infty]{} 0.
		\end{split}
	\end{equation*}
	Since $\left( Y^n_t-\zeta_t \right)^{+} \leq \left( \hat{Y}^n_t-\zeta_t\right)^{+}$, $\forall t \in [0,T]$, we deduce that
	\begin{equation*}
		\lim\limits_{n \rightarrow +\infty} \mathbb{E}\esssup_{\eta \in \mathcal{T}_{[0,T]}} \left|\left(Y^n_{\eta}-\zeta_{\eta} \right)^{+} \right|^2=0.
	\end{equation*}
	This concludes the proof of Lemma \ref{Lemma barriers convergences}.
\end{proof}

By going back to \eqref{passing to the limit in n,p to infinity}, applying the result of Lemma \ref{Lemma barriers convergences}, and subsequently taking the limit, we may now establish
\begin{equation}
	\lim\limits_{n,p\rightarrow +\infty}\left(\left\| Z^n-Z^p \right\|^2_{\mathcal{H}^2}+\left\| U^n-U^p \right\|^2_{\mathcal{M}^2_{\gamma}} \right)=0,
	\label{Zn,Un CV}
\end{equation}
which implies that $(Z^n,U^n)_{n \geq 1}$ is a Cauchy sequence in the Banach space $\mathcal{H}^2 \times \mathcal{M}^2_{\gamma}$. Thus, there exists a pair of processes $(Z,U) \in \mathcal{H}^2 \times \mathcal{M}^2_{\gamma}$ such that
\begin{equation}
	\lim\limits_{n\rightarrow +\infty}\left(\left\| Z^n-Z \right\|^2_{\mathcal{H}^2}+\left\| U^n-U \right\|^2_{\mathcal{M}^2_{\gamma}} \right)=0.
	\label{CV ZU}
\end{equation}

On the other hand, by using (\ref{basic Itos formula for the convergence of the triplet}) along with the same computing techniques employed in the proof of Lemma \ref{lemma of unifrom estimation}, the results from Lemma \ref{Lemma barriers convergences}, \ref{lemma of unifrom estimation}, and the convergence (\ref{Zn,Un CV}), we can conclude that
$$
\lim\limits_{n,p \to +\infty}\mathbb{E} \esssup_{\eta \in \mathcal{T}_{[0,T]}} \left| Y^n_{\eta}-Y^p_{\eta} \right|^2 =0.
$$
From  Proposition 2.1 in \cite{Grigorova2017}, we know that $\mathcal{S}^2$ is a Banach space.  Henceforth, there exists a unique process $Y \in \mathcal{S}^2$ such that 
\begin{equation}
	\lim\limits_{n \to +\infty}\mathbb{E} \esssup_{\eta \in \mathcal{T}_{[0,T]}} \left| Y^n_{\eta}-Y_{\eta} \right|^2 =0.
	\label{CV Y}
\end{equation}
Now, a classical application of Fatou's lemma, the Lebesgue dominated convergence theorem, along with the uniform estimation provided by Lemma \ref{lemma of unifrom estimation} and the convergence results (\ref{CV ZU}) and (\ref{CV Y}), allows us to obtain the integrability condition satisfied by the limiting process $\left(Y_t,Z_t,N_t\right)_{t \leq T}$ as given in the following Lemma:
\begin{lemma}
	The limited process $\left(Y_t,Z_t,N_t\right)_{t \leq T}$ denied by (\ref{CV ZU}) and (\ref{CV Y}) verifies
	$$\left\| Y\right\|^2_{\mathcal{S}^2_{\beta}}+\left\| Y\right\|^2_{\mathcal{S}^{2,\alpha}_{\beta}}+\left\|Z\right\|^2_{\mathcal{H}^2_{\beta}}+\left\|U\right\|^2_{\mathcal{M}^2_{\gamma,\beta}}\leq \mathfrak{c}_{\beta},$$
	where the constant $\mathfrak{c}_{\beta}$ is determined by the right-hand side of the uniform estimation provided by Lemma \ref{lemma of unifrom estimation}.
	\label{Unifor estima and Fatous Lemma}
\end{lemma}

\item \textit{Stage 3: Convergence of the sequence $\{K^n\}_{n \geq 1}$.}\\
Coming back to the backward equation \eqref{Short dynamic of Yn}, and using the results from the previous steps related to the convergence of the sequence $\left\{Y^n,Z^n,U^n\right\}_{n \geq 1}$, we obtain
$$
\lim\limits_{n,p\to+\infty}\mathbb{E} \esssup_{\eta \in \mathcal{T}{[0,T]}} \left| K^n_{\eta}-K^p_{\eta} \right|^2  =0.
$$
Therefore, there exists an $\mathbb{F}$-optional process $K \in \mathcal{S}^2$ such that $K^n \rightarrow K$ as $n \rightarrow +\infty$ in the $\mathcal{S}^2$ space. Moreover, it is worth noting that $K$ has non-decreasing paths. On the other hand, utilizing the uniform estimation satisfied by $\left\{K^{n}\right\}_{n \geq 1}$, we can deduce, by employing the Lebesgue Dominated Convergence theorem, that $\mathbb{E}\left| K_T\right|^2=\lim\limits_{n \rightarrow+\infty} \mathbb{E}\left| K^{n}_T\right|^2 \leq \mathfrak{c}_{\beta}$, where the constant $\mathfrak{c}_{\beta}$ is defined by the right-hand side of the inequality given in Lemma \ref{lemma of unifrom estimation}. Henceforth, the process $K$ has finite left and right limits on $[0,T]$. In other word, $K$ is non-decreasing process with regulated trajectories.

\item \textit{Stage 4: The limiting process $(Y,Z,K,U)$ verifies of the BSDE \eqref{Reflected BSDE with one barrier}-(i).}\\
Passing to the limit term by term in $ \mathbb{L}^2(\Omega,d\mathbb{P})$ as $n \to +\infty$ in \eqref{Short dynamic of Yn}, we obtain
\begin{equation}
	\begin{split}
		Y_t=\xi&+\int_{t}^{T} g(s)ds-\left(K_T-K_t\right)-\int_{t}^{T} Z_s dB_s-\int_{t}^{T}U_s dM_s,\quad t \in [0,T],
	\end{split}
	\label{Classico GHANA}
\end{equation}

\end{itemize}

\paragraph*{Step 4: Skorokhod condition}
Given that the sequence $\{Y^n,K^n\}_{n \geq 1}$ converges towards $(Y,K)$ with respect to the $\left\| \cdot \right\|_{\mathcal{S}^2}$ norm, it also converges uniformly in $t$ in probability. In particular, we get that the measure $dK^{n,\ast}$ tends to $dK^{\ast}$ and that $\Delta_{+}K^n$ tends to $\Delta_{+} K$, where we have used the path-wise decomposition of the non-decreasing regulated process $K$ given by $K=K^{\ast}+\sum_{0 \leq s <\cdot}\Delta_{+} K_s$, with $K^{\ast}=K^c+K^d$ presented in the Definition \ref{regulated processes}. By considering that $\Delta_{+}K^{n}_t=\sum_{\rho_{n,i}=t} \left(Y^n_{\rho_{n,i}+}-\zeta_{\rho_{n,i}}\right)^{+}$ along with the definition of the arrays $\{\rho_{n,i}\}_{n \in \mathbb{N}}$ and letting $n$ tends to $+\infty$, we arrive at the expression $
\Delta_{+}K_t=\left(Y_{t+}-\zeta_t\right)^{+} \mathds{1}_{\{Y_t=\zeta_t\}\cap\{\Delta_{+}\zeta_{t}>0\}}$. Moreover, as the measure $dK^{n,\ast}$ converges weakly to $dK^{\ast}$ in probability, we obtain
$$
\int_{0}^{T}(\zeta_{s-}-Y^n_{s-})dK^{n,\ast}_s \xrightarrow[n \rightarrow+\infty]{\mathbb{P}}\int_{0}^{T}(\zeta_{s-}-Y_{s-})dK^{c,\ast}_s.
$$
Next, from the expression of $K^{n,\ast}$, we deduce that $\int_{0}^{T}(\zeta_{s-}-Y^n_{s-})dK^{n,\ast}_s \leq 0$, for all $n \geq 1$, which implies that $\int_{0}^{T}(\zeta_{s-}-Y_{s-})dK^{\ast}_s \leq 0$. Meanwhile, using Lemma \ref{Lemma barriers convergences}, we deduce that $Y_{\eta} \leq \zeta_{\eta}$ for all $\eta \in \mathcal{T}_{[0,T]}$. By applying the section theorem, we conclude that $Y_t \leq \zeta_t$ for all $t \leq T$ a.s. Consequently, we have $Y_{t-} \leq \zeta_{t-}$ and then $\int_{0}^{T}(\zeta_{s-}-Y_{s-})dK^{c,\ast}_s \geq 0$. Hence, $\int_{0}^{T}(\zeta_{s-}-Y_{s-})dK^{\ast}_s = 0$.\\ Finally, by applying once more Theorem \ref{Ito's formula Theorem} to the dynamic \eqref{Classico GHANA}, and using the Burkholder-Davis-Gundy's inequality, one can derive that for any $\beta > 0$,
\begin{equation*}
\begin{split}
	&\mathbb{E}\esssup_{\eta \in \mathcal{T}_{[0,T]} } e^{ \beta A_{\eta}} \left| Y_{\eta}\right|^2 + \mathbb{E}\int_{0}^{T}e^{\beta A_s} \left( \left| Y_s \right|^2 dA_s+\left\lbrace \left| Z_s \right|^2 + \left| U_s\right|^2 \gamma_s \right\rbrace ds \right)   +\mathbb{E}\left|K_T \right|^2 \\
	& \leq \mathfrak{c}_{\beta} \left( \mathbb{E}e^{\beta A_T}\left|\xi \right|^2 + \mathbb{E} \int_{0}^{T} e^{\beta A_s} \left| \dfrac{g(s)}{\alpha_s} \right|^2 ds+\mathbb{E} \esssup_{\eta \in \mathcal{T}_{[0,T]} } e^{2 \beta A_{\eta}} \left| \zeta_{\eta}^{-}\right|^2  \right).
\end{split}
\end{equation*}
This concludes the proof oh Theorem \ref{existence and uniqueness for the one reflected}.
\end{proof}

\subsubsection{Existence and uniqueness result for RBSDE with general coefficient}
The generator $f$ is now considered to be in a general form, implying that it can depend on the parameters $(y, z,u)$.\\
The theorem below presents the main result of the paper.
\begin{theorem}
	Assume that \textbf{(H1)}, \textbf{(H2)}, and \textbf{(H3)} hold for a sufficiently large $\beta>0$. Then, the RBSDE \eqref{basic equation} admits a unique solution $\left(Y_t,Z_t,K_t,U_t\right)_{t \leq T} \in \mathfrak{B}^2_{\beta}\times \mathcal{H}^2_{\beta}\times \mathcal{S}^2 \times \mathcal{M}^2_{\gamma,\beta}$.
	\label{basic theorem}
\end{theorem}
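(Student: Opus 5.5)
We use a Picard-type fixed point argument in the Banach space $\mathfrak{B}^2_{\beta}\times\mathcal{H}^2_{\beta}\times\mathcal{M}^2_{\gamma,\beta}$. We equip it with the norm
$$\|(Y,Z,U)\|^2_{\beta}:=\|Y\|^2_{\mathcal{S}^{2,\alpha}_{\beta}}+\|Z\|^2_{\mathcal{H}^2_{\beta}}+\|U\|^2_{\mathcal{M}^2_{\gamma,\beta}},$$
possibly adding $\|Y\|^2_{\mathcal{S}^2_{\beta}}$ afterwards. Given $(y,z,u)$ in this space, set $g(s):=f(s,y_s,z_s,u_s)$. By the stochastic Lipschitz condition (H2)(a) together with $\alpha^2\geq\mu$, $\alpha\geq\theta$ and $\alpha^2\geq \nu^2\gamma$, we have
$$\frac{|g(s)|}{\alpha_s}\leq \frac{|f(s,0,0,0)|}{\alpha_s}+\alpha_s|y_s|+|z_s|+\sqrt{\gamma_s}\,|u_s|.$$
Hence $g/\alpha\in\mathcal{H}^2_{\beta}$. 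Theorem \ref{existence and uniqueness for the one reflected} then gives a unique solution $(Y,Z,K,U)$ of the RBSDE \eqref{Reflected BSDE with one barrier} with data $(\xi,g,\zeta)$. This defines a map $\Phi(y,z,u):=(Y,Z,U)$ on the space, and $K$ is recovered from the equation.

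The core step is to show that $\Phi$ is a strict contraction for $\beta$ large. Take two inputs, write $\delta Y$, $\delta Z$, $\delta U$, $\delta K$, $\delta g$ for the differences of outputs and drivers, and apply the Gal'chouk--Lenglart type Itô formula (Corollary \ref{Application of Ito formula}) to $e^{\beta A_t}|\delta Y_t|^2$. This produces $\beta\int e^{\beta A}|\delta Y|^2dA$ on the left, together with the $Z$, $U$ and jump terms (the latter are nonnegative and can be dropped). On the right appear $2\int e^{\beta A}\delta Y\,\delta g\,ds$, the reflection terms $-2\int e^{\beta A_s}\delta Y_{s-}\,d\delta K^{\ast}_s-2\sum e^{\beta A_s}\delta Y_s\,\Delta_+\delta K_s$, and martingale terms, which have zero expectation by the argument of Proposition \ref{Martingale Proposition}. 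The reflection terms are handled with the Skorokhod condition \eqref{basic equation}-(iii) and $Y^i\leq\zeta$:
$$-(Y^1_{s-}-Y^2_{s-})\,dK^{1,\ast}_s=(\zeta_{s-}-Y^1_{s-})\,dK^{1,\ast}_s-(\zeta_{s-}-Y^2_{s-})\,dK^{1,\ast}_s\leq 0,$$
and symmetrically for $K^2$ and for the $\Delta_+$ parts. They are therefore nonpositive. For the driver term, $2|\delta Y||\delta g|\leq 2\alpha|\delta Y|\,|\delta g|/\alpha$, and Young's inequality gives the bound
$$\beta'\,\alpha^2|\delta Y|^2+\frac{3}{\beta'}\left(\alpha^2|\delta y|^2+|\delta z|^2+\gamma|\delta u|^2\right).$$
Choosing $\beta'$ and $\beta$ appropriately yields
$$\|\Phi(y^1,z^1,u^1)-\Phi(y^2,z^2,u^2)\|^2_{\beta}\leq \frac{c}{\beta}\,\|(\delta y,\delta z,\delta u)\|^2_{\beta}$$
for a universal constant $c$, so $\Phi$ is a contraction once $\beta>c$.

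Next we upgrade to the full space $\mathfrak{B}^2_{\beta}$. Returning to the same identity before taking expectations, we take the essential supremum over stopping times and bound the martingale terms with Burkholder--Davis--Gundy, as in the proof of Lemma \ref{lemma of unifrom estimation}. This controls $\|\delta Y\|_{\mathcal{S}^2_{\beta}}$ by the same right-hand side, so the fixed point obtained in the weaker norm actually lies in $\mathfrak{B}^2_{\beta}$. Banach's fixed point theorem then gives a unique $(Y,Z,U)$ with $\Phi(Y,Z,U)=(Y,Z,U)$. The associated $K$ from Theorem \ref{existence and uniqueness for the one reflected} lies in $\mathcal{S}^2$, and $(Y,Z,K,U)$ solves \eqref{basic equation}. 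Uniqueness follows because any solution in $\mathfrak{D}^2_{\beta}$ is a fixed point of $\Phi$; $K$ is then determined by the equation, and $U$ and $Z$ by the uniqueness in Theorem \ref{Representation property}.

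The main obstacle is the reflection-term estimate in the non-right-continuous setting. The formula splits $d\delta K$ into a predictable right-continuous part integrated against $\delta Y_{s-}$ and right jumps $\Delta_+\delta K_s$ multiplied by $\delta Y_s$, and each piece has to be matched with the correct half of the Skorokhod condition. One must also check that the $\Delta_-$ jumps coming from $M$ do not interact with $\Delta_-K$, which holds by quasi-left continuity (Remark \ref{quasi left conti}). A secondary delicate point is the integrability of $g/\alpha$, which needs the $\mathcal{S}^{2,\alpha}_{\beta}$ component of the input; this is why the contraction is set up in the $\|\cdot\|_{\beta}$ norm above.
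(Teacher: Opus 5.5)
Your proposal is correct and follows essentially the same route as the paper. Both argue by contraction for the map $(y,z,u)\mapsto(Y,Z,U)$ supplied by Theorem~\ref{existence and uniqueness for the one reflected}, in the $\alpha$-weighted norm on $\mathcal{S}^{2,\alpha}_{\beta}\times\mathcal{H}^2_{\beta}\times\mathcal{M}^2_{\gamma,\beta}$. The key tools are the It\^o formula of Corollary~\ref{Application of Ito formula}, the Skorokhod conditions to make the reflection terms nonpositive, and Young's inequality with the stochastic Lipschitz bound.

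Your extra Burkholder--Davis--Gundy step for $\|\delta Y\|_{\mathcal{S}^2_{\beta}}$ is harmless but not needed. The fixed point is its own image under $\Phi$, so it already lies in $\mathfrak{B}^2_{\beta}$ by Theorem~\ref{existence and uniqueness for the one reflected}.
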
	

\begin{proof}
	The desired result will be achieved by finding a fixed point of the contraction of the function $\Psi$, which is defined as follows:
	
	Let $\mathfrak{D}^2_{\beta}:=\mathcal{S}^{2,\alpha}_{\beta} \times \mathcal{H}^2_{\beta}\times \mathcal{M}^2_{\gamma,\beta}$, endowed with the norm
	$$
	\left\|\left(Y,Z,U\right) \right\|_{\beta}=\left(\mathbb{E}\left[\int_{0}^{T}e^{\beta A_s}\left(\left| \alpha_s Y_s\right|^2+\left| Z_s \right|^2 +\left| U_s \right|^2 \gamma_s  \right)ds \right]\right)^{\frac{1}{2}}.
	$$
	Let $\Psi$ be the map from $\mathfrak{D}^2_{\beta}$ into itself which associates $\left(y,z,u\right)$ to  $\left(Y,Z,U\right)$ through $\Psi$, where the process $\big(Y,Z,K,U\big)$ is the  solution of the DRBSDE \eqref{Reflected BSDE with one barrier} associated with data  $\big(\xi,f\left(t,u_t,z_t,u_t\right),\zeta\big)$.
	
	Note that, from assumption \textbf{(H2)}, we have
	\begin{equation*}
		\begin{split}
			\left| \dfrac{f(s,y_s,z_s,u_s)}{\alpha_s}\right|^2 &\leq 4 \dfrac{\mu^2_s \left| y_s\right|^2+\theta^2_s \left| z_s\right|^2 +\nu_s^2 \gamma_s^2\left|u_s \right|^2   }{\alpha^2_s}+4\left| \dfrac{f(s,0,0,0)}{\alpha_s}\right|^2.
		\end{split}
	\end{equation*}  
	Hence, since $\mu_s^2 \leq \alpha^4_s$, $\theta^2_s \leq \alpha_s^2$, and $\nu_s^2 \gamma_s^2 \leq \gamma_s \alpha^2_s$, we get
	\begin{equation*}
		\begin{split}
			&\mathbb{E}\int_0^T e^{\beta A_s} \left| \dfrac{f(s,y_s,z_s,u_s)}{\alpha_s}\right|^2 ds \\
			& \leq 4 \left( \mathbb{E}  \int_0^T e^{\beta A_s}\left| y_s\right|^2 d A_s+ \mathbb{E}  \int_0^T e^{\beta A_s}\left( \left|z_s\right|^2 +\left| u_s \right|^2 \gamma_s \right) ds \right. \\	&\qquad\qquad\left.+\mathbb{E}\int_0^T e^{\beta A_s} \left| \dfrac{f(s,0,0,0)}{\alpha_s}\right|^2 ds\right)<\infty.
		\end{split}
	\end{equation*}
	Then, by Theorem \ref{existence and uniqueness for the one reflected}, the introduced mapping $\Psi$ is well defined.
	
	Now, let
	$\left(y^{\prime},z^{\prime},u^{\prime}\right)$ be another triple of $\mathfrak{D}^2_{\beta}$ and $\left(Y^{\prime},Z^{\prime},U^{\prime}\right)=\Psi\left(y^{\prime},z^{\prime},u^{\prime}\right)$. Set $\bar{\mathfrak{S}}:=\mathfrak{S}-\mathfrak{S}^{\prime}$, for $\mathfrak{S}:=Y,Z,K$ and $U$.
	
	By applying Corollary \ref{Application of Ito formula}, and performing some standard computations, we obtain
	\begin{equation}
		\begin{split}
			&\beta \mathbb{E} \int_{0}^{T} e^{\beta A_s} \left| \bar{Y}_s \right|^2 dA_s+\mathbb{E} \int_{0}^{T}e^{\beta A_s} \left| \bar{Z}_s \right|^2 d s+\mathbb{E} \int_{0}^{T} e^{\beta A_s} \left|\bar{U}_s \right|^2  \gamma_s ds  \\
			&  \leq 2 \mathbb{E}\int_{0}^{T}\bar{Y}_s \left(f(s,y_s,z_s,u_s)-f(s,y^{\prime}_s,z^{\prime}_s,u^{\prime}_s)\right) ds \\ &\qquad+2\mathbb{E} \int_{0}^{T} e^{\beta A_s}\bar{Y}_{s-}d \bar{K}^{\ast}_s+2\mathbb{E} \sum_{0 \leq s < T} e^{\beta A_s}\bar{Y}_{s}\Delta_{+} \bar{K}_s
		\end{split}
		\label{Itos formula for the general case}
	\end{equation}
	Thanks to the Skorokhod conditions on $K$, we have
	\begin{equation}
		\begin{split}
			\int_{0}^{T} e^{\beta A_s}\bar{Y}_{s-}d \bar{K}^{\ast}_s=\int_{0}^{T} e^{\beta A_s}\bar{Y}_{s-}\left( d K^{\ast}_s-d K^{\prime,\ast}_s\right) \leq 0,
		\end{split}
		\label{Skorokhod condition for the right part}
	\end{equation}
	and
	\begin{equation}
		\begin{split}
			\sum_{0 \leq s < T} e^{\beta A_s}\bar{Y}_{s}\Delta_{+} \bar{K}_s
			=\sum_{0 \leq s < T} e^{\beta A_s}\left(Y_s-\zeta_s \right)   \Delta_{+} K^{\prime,-}_s-\sum_{0 \leq s < T} e^{\beta A_s}\left(\zeta_s-Y^{\prime}_s \right) \Delta_{+} K^{-}_s 
			\leq 0.
		\end{split}
		\label{Skorokhod condition for the right limit}
	\end{equation}
	On the other hand, by utilizing the stochastic Lipschitz condition satisfied by the coefficient $f$ and the basic inequality $2ab \leq \frac{1}{\epsilon}a^2+\epsilon b^2$ for all $\epsilon > 0$, we can establish the following inequality for any $\beta >1$,
	\begin{equation}
		\begin{split}
			&2\bar{Y}_s \left(f(s,y_s,z_s,u_s)-f(s,y^{\prime}_s,z^{\prime}_s,u^{\prime}_s)\right) ds  \\
			&\leq 2\left| \bar{Y}_s\right|  \left( \mu_s \left|\bar{y}_s \right| +\theta_s \left|\bar{z}_s \right| +\nu_s \gamma_s \left| \bar{u}_s\right|   \right) ds\\
			&\leq \left(\beta-1\right)\left| \bar{Y}_s \alpha_s\right|^2  ds+\dfrac{1}{\beta-1}\left(   \left| \bar{y}_s \alpha_s\right|^2+   \left| \bar{z}_s\right|^2 +\left|\bar{u}_s \right|^2 \gamma_s \right)ds.
		\end{split}
		\label{General case}
	\end{equation}
	
	By substituting inequalities \eqref{Skorokhod condition for the right part}, \eqref{Skorokhod condition for the right limit}, and \eqref{General case} into \eqref{Itos formula for the general case}, we obtain, for $\beta >2$,
	\begin{equation*}
		\begin{split}
			& \mathbb{E} \int_{0}^{T} e^{\beta A_s} \left| \bar{Y}_s \right|^2 dA_s+\mathbb{E} \int_{0}^{T}e^{\beta A_s} \left| \bar{Z}_s \right|^2 ds+\mathbb{E} \int_{0}^{T} e^{\beta A_s} \left|\bar{U}_s \right|^2  \gamma_s ds  \\
			&  \leq \mathfrak{c}_{\beta}  \left( \mathbb{E} \int_{0}^{T} e^{\beta A_s} \left| \bar{y}_s \right|^2 dA_s+\mathbb{E} \int_{0}^{T}e^{\beta A_s} \left| \bar{z}_s \right|^2 d s+\mathbb{E} \int_{0}^{T} e^{\beta A_s} \left|\bar{u}_s \right|^2  \gamma_s ds \right),
		\end{split}
	\end{equation*}
	where $\mathfrak{c}_{\beta} \in ]0,1[$.
	
	Then, The mapping $\Psi$ is a contraction and thus possesses a unique fixed point $(Y, Z)$, which indeed belongs to $\mathcal{S}^{2,\alpha}_{\beta} \times \mathcal{H}^2_{\beta}$. Moreover, there exists $(N, K^{+}, K^{-}) \in \mathcal{M}^2_{\beta} \times \mathcal{S}^2 \times \mathcal{S}^2$ with $K^{\pm}_0 = 0$ such that $(Y, Z, K^{+}, K^{-}, N)$ constitutes a unique solution to the reflected BSDE \eqref{basic equation} associated with $\left(\xi,f,\zeta\right)$.
\end{proof}

\subsection{Reflected BSDEs with one lower irregular barrier and a standard optimal stopping problem}
\subsubsection{Existence and uniqueness result}
First, we replace condition \textbf{(H3)} related to the upper obstacle to the following one, 
\subparagraph{\textbf{(H3')} Lower obstacle $\L$}
\begin{itemize}
	\item The obstacle $\L:=(\L_t)_{t \leq T}$ is an $\mathbb{F}$-optional process.
	\item The barrier $\L$ is a regulated process in the sense of Definition \ref{regulated processes} such that $\xi \geq \L_T$ a.s.
	\item $\L^{+} \in \mathcal{S}^2_{2 \beta}$.
\end{itemize}

Next, we consider the following BSDE with one lower irregular reflecting barrier $\L$:
\begin{equation}
	\left\{
	\begin{split}
		\text{(i)} &~Y_t= \xi+\int_t^T f(s,Y_s,Z_s,U_s)ds+(K_T-K_t) -\int_t^T Z_s d B_s -\int_t^T  U_s dM_s,\\
		\text{(ii)} &~ Y_t \geq \L_t,~ 0 \leq t \leq T,~\text{a.s.,}\\
		\text{(iii)} &~ \text{Skorokhod conditions:}\\
		&~ \text{ If } K^{\pm,\ast} \text{ denotes the right-continuous part of } K^{\pm} \text{ then } K^{\pm,\ast} \text{ is predictable and }\\
		&\text{ } 
		\int_0^T (Y_{s-}-\L_{s-})dK^{\ast}_s+\sum_{0 \leq s < T}  (Y_s-\L_t)\Delta_{+}K_s=0.
	\end{split}
	\right.
	\label{basic equation lower}
\end{equation}
The solution of the RBSDE \eqref{basic equation lower} is given similarly as in Definition \ref{Definition of the solution} for the case of BSDE with one upper irregular reflecting barrier.

Following this and using an analogous argument as the one used in the proof of Theorems \ref{existence and uniqueness for the one reflected} and \ref{basic theorem}, we may show the following result.\\
\begin{theorem}
	Assume that the triplet $(\xi,f,\L)$ satisfies \textbf{(H1)}, \textbf{(H2)} and \textbf{(H3')} for a sufficiently large $\beta>0$. Then, the RBSDE \eqref{basic equation lower} admits a unique solution $\left(Y_t,Z_t,K_t,U_t\right)_{t \leq T} \in \mathfrak{B}^2_{\beta}\times \mathcal{H}^2_{\beta}\times \mathcal{S}^2 \times \mathcal{M}^2_{\gamma,\beta}$.
	\label{Lower existence Thm}
\end{theorem}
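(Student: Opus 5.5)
The quickest route is to reduce Theorem \ref{Lower existence Thm} to Theorem \ref{basic theorem} by a sign change, rather than redoing the penalization. Given $(\xi,f,\L)$ satisfying \textbf{(H1)}, \textbf{(H2)}, \textbf{(H3')}, I set
\[
\tilde\xi:=-\xi,\qquad \tilde f(t,y,z,u):=-f(t,-y,-z,-u),\qquad \zeta:=-\L .
\]
Then $\tilde\xi\in\mathbb{L}^2_\beta$. The map $\tilde f$ is progressively measurable and satisfies the stochastic Lipschitz condition (a) with the same $\mu,\theta,\nu$, so (b) holds with the same $\alpha$. Moreover $|\tilde f(\cdot,0,0,0)|=|f(\cdot,0,0,0)|$, so $\tilde f(\cdot,0,0,0)/\alpha\in\mathcal{H}^2_\beta$.

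For the barrier, $\zeta$ is optional and regulated, and $\tilde\xi\le\zeta_T$ follows from $\xi\ge\L_T$. Also $\zeta^-=\L^+\in\mathcal{S}^2_{2\beta}$. Hence $(\tilde\xi,\tilde f,\zeta)$ satisfies \textbf{(H1)}, \textbf{(H2)}, \textbf{(H3)}. Theorem \ref{basic theorem} then gives a unique solution $(\tilde Y,\tilde Z,\tilde K,\tilde U)$ of \eqref{basic equation} with these data, for $\beta$ large enough.

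\textbf{Existence.} I set $(Y,Z,K,U):=(-\tilde Y,-\tilde Z,\tilde K,-\tilde U)$. Multiplying \eqref{basic equation}-(i) by $-1$ gives
\[
Y_t=\xi+\int_t^T f(s,Y_s,Z_s,U_s)\,ds+(K_T-K_t)-\int_t^T Z_s\,dB_s-\int_t^T U_s\,dM_s ,
\]
which is \eqref{basic equation lower}-(i). The constraint $\tilde Y\le\zeta$ becomes $Y\ge\L$. The reflection process $K$ is the same non-decreasing regulated process, and its right-continuous part $K^\ast$ is predictable. The Skorokhod condition transforms exactly, because
\[
\zeta_{s-}-\tilde Y_{s-}=Y_{s-}-\L_{s-},\qquad \zeta_s-\tilde Y_s=Y_s-\L_s .
\]
The norms in $\mathfrak{B}^2_\beta\times\mathcal{H}^2_\beta\times\mathcal{S}^2\times\mathcal{M}^2_{\gamma,\beta}$ are invariant under these sign changes, so $(Y,Z,K,U)$ lies in the required space.

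\textbf{Uniqueness.} Any solution of \eqref{basic equation lower} is mapped by the same transformation to a solution of \eqref{basic equation} with data $(\tilde\xi,\tilde f,\zeta)$. The uniqueness part of Theorem \ref{basic theorem} therefore forces it to coincide with $(\tilde Y,\tilde Z,\tilde K,\tilde U)$ after the inverse transformation.

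\textbf{Main obstacle.} The only real care is bookkeeping. First, the lower-barrier Skorokhod condition must be exactly the image of the upper one, including the predictability of $K^\ast$ and the form of the right-jump term. Second, the requirement ``$\beta$ sufficiently large'' must be inherited unchanged from Theorems \ref{existence and uniqueness for the one reflected} and \ref{basic theorem}. Both hold, since the constants there depend only on $\beta$ and on the data through the invariant quantities $\mathbb{E}e^{\beta A_T}|\xi|^2$, $\|f(\cdot,0,0,0)/\alpha\|_{\mathcal{H}^2_\beta}$ and $\mathbb{E}\esssup_\eta e^{2\beta A_\eta}|\L^+_\eta|^2$.

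Alternatively, one could repeat the penalization with the term $+n\int_t^T(Y^n_s-\L_s)^-ds+\sum_{t\le\rho_{n,i}<T}(Y^n_{\rho_{n,i}+}-\L_{\rho_{n,i}})^-$. In that scheme the sequence $Y^n$ is increasing, and one then uses the fixed point $\Psi$ as before. The symmetry argument avoids all of this.
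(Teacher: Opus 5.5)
\paragraph{Comparison with the paper's route.}
Your reduction is correct, but it is not the route the paper takes for Theorem~\ref{Lower existence Thm}.

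The paper justifies the statement by saying that the proofs of Theorems~\ref{existence and uniqueness for the one reflected} and~\ref{basic theorem} can be repeated for a lower barrier. Concretely, that means:
\begin{itemize}
\item rerun the modified penalization, now pushing upward, so that $Y^n$ increases;
\item show that the amount by which $Y^n$ falls below the obstacle vanishes in $\mathcal{S}^2$;
\item pass to the limit in the Skorokhod condition;
\item redo the contraction argument for $\Psi$.
\end{itemize}

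The sign symmetry you use appears in the paper only afterwards, as a remark. There the transformed driver is written loosely as $-f$. Your formula $\tilde f(t,y,z,u)=-f(t,-y,-z,-u)$ is the correct one. Your check that $\tilde f$ has the same stochastic Lipschitz processes $\mu,\theta,\nu$ is what makes the reduction legitimate. It gives the same $\alpha$, the same $A$, the same weighted spaces and the same threshold on $\beta$. The other ingredient is that the negative part of the reflected obstacle is the positive part of the original one, so \textbf{(H3')} maps exactly onto \textbf{(H3)}.

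Your approach inherits existence, uniqueness and the a priori bounds in one stroke. You do not need to re-verify the monotone-limit, barrier-convergence and limit-passage steps, nor the contraction estimate, for the lower problem.

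The paper's route would give a self-contained construction with an explicit increasing approximation of $Y$. However, nothing later in the paper uses that scheme. Proposition~\ref{Characetrization of lower obstacle}, the comparison results and Section~\ref{sec 5} rely only on existence and uniqueness. So your symmetry argument loses nothing.
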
	

\begin{remark}
	Note that the notion of a solution of a BSDE with one upper or lower
	reflecting barrier is closely linked. Namely, A quadruplet $(Y,Z,K,U)$ is a solution for the BSDE with a upper reflecting irregular barrier $\zeta$, a coefficient $f$ and a terminal value $\xi$ if and only if $(-Y,-Z,K,-U)$ is a solution for the BSDE with a reflecting lower irregular barrier associated with $(-\xi,-f,-\zeta)$. 
\end{remark}
\begin{remark}
	We point out that a quadruplet $(Y,Z,K,U)$ is a solution of the BSDE \eqref{basic equation lower}-(i) if and only if $Y_{\eta}=\xi+\int_{\eta}^{T}f(s,Y_s,Z_s,U_s)ds+(K_T-K_\eta) -\int_\eta^T Z_s d B_s -\int_\eta^T  U_s dM_s$, a.s. for all $\eta \in \mathcal{T}_{[0,T]}$ (refer to \cite[Theorem IV.84]{dellacherie1975probabilites}).
	\label{Remark of the starting stp}
\end{remark}
We additionally provide an integrable property fulfilled by the components of the state process $Y$ for the RBSDE \eqref{basic equation lower} given by the decomposition \eqref{form of the process for Ito}, namely, the RCLL semimartingales $Y^{\ast}$ and the  purely discontinuous part $\sum_{0 \leq s < \cdot} \Delta_{+}Y_s$.
\begin{remark}
	Note that due to the non-decreasingness property of $K^{\ast}$ and $\sum_{0 \leq s <\cdot} \Delta_{+} K_s$, and the fact that $\Delta_{+}Y_s=-\Delta_{+} K_s$, $\forall s \in [0,T]$ a.s., we have $\mathbb{E}\left( \sum_{0 \leq s < T} \Delta_{+} Y_s \right)^2=\mathbb{E}\left( \sum_{0 \leq s < T} \Delta_{+} K_s \right)^2 \leq \mathbb{E} \left|  K_T \right|^2$. Next, from the definition of the solution $(Y_t,Z_t,K_t,U_t)_{t \leq T}$ (see Definition \ref{Definition of the solution}), we have $ \mathbb{E} \left|  K_T \right|^2<\infty$, then $\mathbb{E}\left( \sum_{0 \leq s < T} \Delta_{+} Y_s \right)^2<+\infty$. Returning to the corresponding decomposition \eqref{form of the process for Ito} of $Y$, and using the fact that $Y \in \mathcal{S}^2_{\beta}$ together with the right-continuity of $Y^{\ast}$ and Remark A.1 in \cite{Grigorova2017}, we get
	$$
	\mathbb{E}\sup_{0 \leq t \leq T}\left| Y^{\ast}_t \right|^2 \leq 2\left(\mathbb{E}\esssup_{\eta \in \mathcal{T}{[0,T]}}e^{\beta A_{\eta}}\left|Y_{\eta} \right|^2+\mathbb{E}\left( \sum_{0 \leq s < T} \Delta_{+} Y_s \right)^2 \right)<+\infty.
	$$ 
	\label{Remark on the right-continuous part}
\end{remark}
\subsubsection{Links with a standard optimal stopping problem}
We now give a well-known characterization of the first component of the solution for the RBSDE \eqref{basic equation lower} as the value of a given optimal stopping problem.\\
Given a data $(\xi,f,\L)$, satisfying conditions \textbf{(H1)}, \textbf{(H2)} and \textbf{(H3')}. Denotes by $\left(Y_t,Z_t,K_t,U_t\right)_{t \leq t}$ the unique solution of the RBSDE \eqref{basic equation lower}. Consider an optimal stopping problem with gain process given at each time $\eta \in \mathcal{T}_{[0,T]}$ by
$$
\mathsf{G}_\eta=\int_{0}^{\eta}f(s,Y_s,Z_s,U_s)ds+\L_{\eta} \mathds{1}_{\{\eta<T\}}+\xi \mathds{1}_{\{\eta=T\}}.
$$
Then, we have the following proposition
\begin{proposition}
	Let $(Y_t,Z_t,K_t,U_t)_{t \leq T}$ be the unique solution of the RBSDE \eqref{basic theorem} associated with $(\xi,g,\L)$. Then
	$$
	Y_t=\esssup_{\eta \in \mathcal{T}_{[t,T]}} \mathbb{E}\left[ \int_{t}^{\eta}f(s,Y_s,Z_s,U_s)ds+\L_{\eta} \mathds{1}_{\{\eta<T\}}+\xi \mathds{1}_{\{\eta=T\}} \mid \mathcal{F}_t \right],\quad t \in [0,T]. 
	$$
	In other word, $Y_0$ is the value of the optimal stopping problem with payoff given by the process $(\mathsf{G}_t)_{t \leq T}$.
	\label{Characetrization of lower obstacle}
\end{proposition}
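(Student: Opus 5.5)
We prove the two inequalities separately, with $t$ fixed. By Remark \ref{Remark of the starting stp} it is enough to argue at a stopping time $t$. Throughout, write $\mathsf{g}_s:=f(s,Y_s,Z_s,U_s)$. This is a fixed process, so the problem is a linear optimal stopping problem with running gain $\mathsf{g}$, and the proof follows the classical argument of El Karoui et al. adapted to regulated barriers as in Grigorova et al.

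\emph{Step 1: $Y_t$ dominates every candidate.} Fix $\eta\in\mathcal{T}_{[t,T]}$ and write \eqref{basic equation lower}-(i) between $t$ and $\eta$:
$$Y_t=Y_\eta+\int_t^\eta \mathsf{g}_s\,ds+(K_\eta-K_t)-\int_t^\eta Z_s\,dB_s-\int_t^\eta U_s\,dM_s .$$
The stochastic integrals are true martingales, by the same BDG argument as in Proposition \ref{Martingale Proposition}, since $Z\in\mathcal{H}^2_\beta$, $U\in\mathcal{M}^2_{\gamma,\beta}$ and $\beta\geq 0$. Hence their conditional expectations given $\mathcal{F}_t$ vanish. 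Moreover $K_\eta-K_t\ge 0$, and $Y_\eta\ge \L_\eta\mathds{1}_{\{\eta<T\}}+\xi\mathds{1}_{\{\eta=T\}}$ because $Y\ge\L$ and $Y_T=\xi$. Taking $\mathbb{E}[\cdot\mid\mathcal{F}_t]$ therefore gives $Y_t\ge \mathbb{E}[\int_t^\eta \mathsf{g}_s ds+\L_\eta\mathds{1}_{\{\eta<T\}}+\xi\mathds{1}_{\{\eta=T\}}\mid\mathcal{F}_t]$. Taking the esssup over $\eta$ yields ``$\ge$''. Integrability of each term follows from $\L^+\in\mathcal{S}^2_{2\beta}$, $Y\in\mathcal{S}^2_\beta$ and $\mathsf{g}/\alpha\in\mathcal{H}^2_\beta$ together with $\alpha^2\ge\epsilon$.

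\emph{Step 2: $\varepsilon$-optimal stopping times.} Since $\L$ is only regulated and not right upper semicontinuous, an optimal time need not exist. We therefore use, for $\varepsilon>0$, the stopping time
$$\eta_\varepsilon:=\inf\{s\ge t:\ Y_s\le \L_s+\varepsilon\}\wedge T .$$
It is a stopping time by the début theorem, since $Y-\L$ is optional and the filtration is complete and right-continuous. We aim to show that $K$ does not increase on $[t,\eta_\varepsilon]$ in the appropriate sense, namely $K^{\ast}_{\eta_\varepsilon}=K^{\ast}_t$ and $\Delta_+K_s=0$ for $s\in[t,\eta_\varepsilon)$. On $[t,\eta_\varepsilon)$ we have $Y_s>\L_s+\varepsilon$, so the Skorokhod condition kills $\Delta_+K_s$ there. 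For the right-continuous part, on $(t,\eta_\varepsilon]$ we have $Y_{s-}-\L_{s-}\ge\varepsilon>0$, so $dK^\ast=0$ there. One has to check the endpoint $s=\eta_\varepsilon$ and the left jump there, using $Y_{s-}\ge \L_{s-}+\varepsilon$ for $s\le\eta_\varepsilon$. The point $s=\eta_\varepsilon$ itself contributes only through $\Delta_+K_{\eta_\varepsilon}$, which is excluded on $[t,\eta_\varepsilon)$. Hence $K_{\eta_\varepsilon}=K_t$, where $K$ is taken with the convention that $K_s$ collects $\Delta_+K$ on $[0,s)$.

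\emph{Step 3: closing the bound.} Repeating Step 1 with $\eta=\eta_\varepsilon$ and using $K_{\eta_\varepsilon}-K_t=0$ gives $Y_t=\mathbb{E}[Y_{\eta_\varepsilon}+\int_t^{\eta_\varepsilon}\mathsf{g}_sds\mid\mathcal{F}_t]$. We need $Y_{\eta_\varepsilon}\le \L_{\eta_\varepsilon}+\varepsilon$ on $\{\eta_\varepsilon<T\}$. This holds if the infimum is attained, but in general it may not be: the infimum can be approached from the right, which only gives $Y_{\eta_\varepsilon+}\le \L_{\eta_\varepsilon+}+\varepsilon$. This is the main obstacle. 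To handle it I would work in the non-attained case with $Y_{\eta_\varepsilon+}$, which means using the equation on $(t,\eta_\varepsilon]$ plus a right-limit argument, or else bound $Y_{\eta_\varepsilon}$ via $\Delta_+Y_{\eta_\varepsilon}=-\Delta_+K_{\eta_\varepsilon}$ and Remark \ref{Reflecting property}. If this endpoint issue cannot be resolved directly, I would fall back to the Mertens/Snell-envelope characterization. In that approach, $Y_t+\int_0^t\mathsf{g}_sds$ is a strong supermartingale dominating the gain $\mathsf{G}$, and its Mertens decomposition has the nondecreasing part $K$. One then invokes the Snell-envelope identification of Grigorova et al. for the optional process $\mathsf{G}$: the minimal strong supermartingale dominating $\mathsf{G}$ is characterized by exactly these Skorokhod conditions, and this gives ``$\le$''. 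Once the endpoint bound is available, we obtain $Y_t\le \mathbb{E}[\int_t^{\eta_\varepsilon}\mathsf{g}_sds+\L_{\eta_\varepsilon}\mathds{1}_{\{\eta_\varepsilon<T\}}+\xi\mathds{1}_{\{\eta_\varepsilon=T\}}\mid\mathcal{F}_t]+\varepsilon$, and letting $\varepsilon\to0$ concludes.
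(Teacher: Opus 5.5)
\textbf{Verdict: genuine gap.} Steps 1 and 2 are sound, but the reverse inequality $Y_t\le\esssup$ is never established.

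Step 1 gives $Y_t\ge\esssup_{\eta}\mathbb{E}[\cdots\mid\mathcal{F}_t]$. Step 2 correctly extracts $K_{\eta_\varepsilon}=K_t$ from the two Skorokhod conditions. (Minor point: the integrability of $\int_0^T|\mathsf{g}_s|ds$ comes from Cauchy--Schwarz and $\int_0^Te^{-\beta A_s}dA_s\le 1/\beta$, not from $\alpha^2\ge\epsilon$.)

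The bound you say you need in Step 3, $Y_{\eta_\varepsilon}\le \L_{\eta_\varepsilon}+\varepsilon$ on $\{\eta_\varepsilon<T\}$, is false in general for a merely regulated barrier. Take deterministic data on $[0,2]$ with $f\equiv 0$, $\L=\mathds{1}_{(1,2]}$ and $\xi=1$. Then $Y\equiv 1$ and $K\equiv 0$. For $t=0$ and $\varepsilon=1/2$ you get $\eta_\varepsilon=1$ and $Y_1=1>\L_1+\varepsilon$, so stopping at $\eta_\varepsilon$ pays $0$.

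Hence bounding $Y_{\eta_\varepsilon}$ via $\Delta_+Y_{\eta_\varepsilon}=-\Delta_+K_{\eta_\varepsilon}$ and Remark \ref{Reflecting property} cannot work. On the non-attained set it only gives $Y_{\eta_\varepsilon}=Y_{\eta_\varepsilon+}\le \L_{\eta_\varepsilon+}+\varepsilon$, and there $\L_{\eta_\varepsilon+}>\L_{\eta_\varepsilon}$. This is exactly the case the r.u.s.c. hypothesis of Lemma \ref{Lemma needed for optimal stopping} excludes.

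To rescue the direct route you must stop strictly after $\eta_\varepsilon$:
\begin{itemize}
\item Let $A:=\{\eta_\varepsilon<T,\ Y_{\eta_\varepsilon}>\L_{\eta_\varepsilon}+\varepsilon\}\in\mathcal{F}_{\eta_\varepsilon}$. On $A$ you have $\Delta_+K_{\eta_\varepsilon}=0$ and $Y_{\eta_\varepsilon+}\le\L_{\eta_\varepsilon+}+\varepsilon$.
\item Set $\rho_n:=\eta_\varepsilon$ on $A^c$ and $\rho_n:=(\eta_\varepsilon+1/n)\wedge T$ on $A$.
\item Get $\rho_n'$ by replacing $\rho_n$ with $T$ on the $\mathcal{F}_{\rho_n}$-measurable set $A\cap\{Y_{\rho_n}>\L_{\rho_n}+2\varepsilon\}$. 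Its indicator tends to $0$ a.s., and this also sidesteps the possible non-integrability of $\L^-$.
\item Then $Y_t\le\mathbb{E}[\int_t^{\rho_n'}\mathsf{g}_sds+\L_{\rho_n'}\mathds{1}_{\{\rho_n'<T\}}+\xi\mathds{1}_{\{\rho_n'=T\}}\mid\mathcal{F}_t]+2\varepsilon+\mathbb{E}[K_{\rho_n'}-K_t\mid\mathcal{F}_t]$.
\item The last term tends to $0$ in $L^1$ by dominated convergence with $K_T\in L^2$, since $K_{\rho_n}\to K_{\eta_\varepsilon+}=K_{\eta_\varepsilon}$ on $A$.
\end{itemize}
None of this is in your proposal.

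Your fallback is the paper's route, but as stated it presupposes the result. The direction of the ``identification'' you need is that a strong supermartingale dominating $\mathsf{G}$, whose Mertens increasing part satisfies the Skorokhod conditions, must be the smallest one. That is the proposition itself.

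The paper argues the other way:
\begin{itemize}
\item The Snell envelope $\mathsf{S}$ of $\mathsf{G}$ is a strong supermartingale of class (D) (Klimsiak et al., Prop.~3.10).
\item Its Mertens decomposition, together with the representation Theorem \ref{Representation property}, makes $(S,\mathcal{Z},\mathcal{K},\mathcal{U})$ a solution of the reflected BSDE whose driver is the fixed process $\mathsf{g}$.
\item The Skorokhod condition for $\mathcal{K}$ is the standard Snell-envelope property (Klimsiak et al., Cor.~3.11), and $S\ge\L$.
\item Since $(Y,Z,K,U)$ solves the same equation with driver $\mathsf{g}$, uniqueness (Theorem \ref{Lower existence Thm}) forces $S=Y$.
\end{itemize}
This uniqueness step is what your sketch is missing. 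Once it is added, the $\varepsilon$-stopping-time machinery becomes unnecessary. Conversely, the repaired direct argument above avoids Mertens' decomposition and the external Snell-envelope results altogether.
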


\begin{proof}
	Set 
	$$
	S_t:=\esssup_{\eta \in \mathcal{T}_{[t,T]}} \mathbb{E}\left[ \int_{t}^{\eta}f(s,Y_s,Z_s,U_s)ds+\L_{\eta} \mathds{1}_{\{\eta<T\}}+\xi \mathds{1}_{\{\eta=T\}} \mid \mathcal{F}_t \right],\quad t \in [0,T].
	$$
	Then $\mathsf{S}:=\big(S_t +\int_{0}^{t}f(s,Y_s,Z_s,U_s)ds\big)_{t \leq T}$ is the Snell envelope of the process $\left(\mathsf{G}_t\right)_{t \leq T}$. As $Y \in \mathfrak{B}^2_{\beta}$, $Z \in \mathcal{H}^2_{\beta}$, $U \in \mathcal{M}^2_{\beta,\gamma}$, $\L^{+} \in \mathcal{S}^2_{2\beta}$, $\xi^{-} \leq \L^{-}_T$, $\xi \in \mathbb{L}^2_{\beta}$ and $\frac{f(\cdot,0,0,0)}{\alpha} \in \mathcal{H}^2_{\beta}$, we deduce  from Proposition 3.10 in \cite{klimsiak2019reflected}, that $\mathsf{S}$ is an $\mathbb{F}$-supermartingale of class (D). Hence, based on the results of \cite{Aspects} using Mertens decomposition, we derive the existence of an increasing
	process $\mathcal{K}$ with regulated trajectories that satisfies $\mathcal{K}_0 =0$ and a  martingale $\mathcal{M}$ with  the representation $\mathcal{M}_t=\mathcal{M}_0+\int_{0}^{t}\mathcal{Z}_s dB_s+\int_{0}^{t} \mathcal{U}_s dM_s$ (see Theorem \ref{Representation property}), such that 
	$$
	\mathsf{S}_t=\mathsf{S}_T+\int_{t}^{T} d\mathcal{K}_s-\int_{t}^{T}\mathcal{Z}_s dB_s-\int_{t}^{T} \mathcal{U}_s dM_s,\quad t \in [0,T].
	$$
	Then, as $\mathsf{S}_T=\xi+\int_{0}^{T}f(s,Y_s,Z_s,U_s)ds$, the quadruplet $(S,\mathcal{Z},\mathcal{K},\mathcal{U})$ satisfies the following BSDE
	$$
	S_t=\xi+\int_{t}^{T}f(s,Y_s,Z_s,U_s)ds+\int_{t}^{T} d\mathcal{K}_s-\int_{0}^{t}\mathcal{Z}_s dB_s-\int_{0}^{t} \mathcal{U}_s dM_s,\quad t \in [0,T].
	$$
	Next, since $\L$ has finite left limits and using Corollary 3.11 in \cite{klimsiak2019reflected}, we deduce that the increasing process $\mathcal{K}$, satisfies
	$$
	\int_{0}^{T}\left(S_{u-}-\L_{u-}\right)d\mathcal{K}^{\ast}_u+\sum_{0 \leq u < T} (S_u-\L_u)\Delta_{+} \mathcal{K}_u=0.
	$$
	Moreover, from the definition of the process $(S_t)_{t \leq T}$ and the fact that $S_T=\xi \geq \L_T$ a.s., it is easy ro check that $S_t \geq \L_t$, $\forall t \in [0,T]$. Henceforth, the uniqueness of the solutions for RBSDE \eqref{basic equation lower} associated with $(\xi,f,\L)$ provided by Theorem \ref{Lower existence Thm} allows us to complete the proof.
\end{proof}

\begin{remark}
	Following a similar argument as the one used in the proof of Proposition \ref{Characetrization of lower obstacle} and assuming that conditions of Theorem \ref{existence and uniqueness for the one reflected} hold, we also provide a characterization for the solution of the BSDE \eqref{basic equation}  with one upper irregular barrier $\zeta$, represented as follows: Let $(Y_t,Z_t,K_t,U_t)_{t \leq T}$ be the solution of RBSDE \eqref{Reflected BSDE with one barrier}, then the state process $(Y_t)_{t \leq T}$ can be expressed as
	$$
	Y_t=\essinf_{\eta \in \mathcal{T}_{[t,T]}} \mathbb{E}\left[ \int_{t}^{\eta}f(s,Y_s,Z_s,U_s)ds+\zeta_{\eta} \mathds{1}_{\{\eta<T\}}+\xi \mathds{1}_{\{\eta=T\}} \mid \mathcal{F}_t \right],\quad t \in [0,T].
	$$
\end{remark}
\section{Comparison principal}
\label{sec 4}
First, we need to state the following auxiliary result, where the proof is presented similarly as in \cite[Proposition 3]{dumitrescu2018bsdes}.
\begin{proposition}
	Let $(\varphi_t)_{t \leq T}$ and $(\psi_t)_{t \leq T}$ be two $\mathbb{F}$-predictable real-valued processes, and let $(\Lambda_t)_{t \leq T}$ be the solution of the forward SDE:
	$$ 
	d\Lambda_t = \Lambda_{t-}(\varphi_t dB_t + \psi_t dM_t). 
	$$
	Assume that $\psi_t \geq -1$ and $\gamma_t dt \otimes d \mathbb{P}$-a.s., and further that the random variable $\int_{0}^{T} \{\varphi^2_s + \psi_s^2 \gamma_s\}ds$ is bounded. Then the process $(\Lambda_t)_{t \leq T}$ is a non-negative martingale satisfying $\mathbb{E}\left[\sup_{0 \leq t \leq T}|\Lambda_t|^2\right] < +\infty$.
	
	In particular, if there a third $\mathbb{F}$-predictable process $(\delta_t)_{t \leq T}$ such that $\int_{0}^{T} \left| \delta_s\right|ds$ is bounded. Then, the process $(\Lambda^{\star}_t)_{t \geq 0}$ solution of the following forward SDE:
	$$
	d\Lambda^{\star}_t=\Lambda^{\star}_{t-}\left(\delta_t dt+\varphi_t dB_t+\psi_t dM_t\right),
	$$
	is a non-negative special RCLL semimartingale satisfying  $\mathbb{E}\left[\sup_{0 \leq t \leq T}\left| \Lambda^{\star}_t\right|^2 \right]<+\infty$.
	\label{Integr problm solved}
\end{proposition}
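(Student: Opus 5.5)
We will use the explicit Doléans-Dade exponential. Since $M$ is a finite variation martingale with a single jump of size one at $\tau$, and $B$ is continuous with $[B,M]=0$, the unique solution of $d\Lambda_t=\Lambda_{t-}(\varphi_t dB_t+\psi_t dM_t)$ with $\Lambda_0=1$ is
$$
\Lambda_t=\exp\Big(\int_0^t\varphi_s dB_s-\frac12\int_0^t\varphi_s^2ds-\int_0^t\psi_s\gamma_s ds\Big)\big(1+\psi_\tau\mathds{1}_{\{\tau\le t\}}\big).
$$
Here we use $dM_s=dH_s-\gamma_sds$ and the fact that $\gamma$ vanishes after $\tau$ (Remark \ref{Rm zero after default}). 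Non-negativity follows at once from $\psi_\tau\ge -1$. This requires interpreting the condition ``$\psi\ge-1$, $\gamma_t\,dt\otimes d\mathbb{P}$-a.s.'' as giving $1+\psi_\tau\ge0$ a.s. We will justify this by noting that $\mathbb{E}\int_0^T\mathds{1}_{\{\psi_s<-1\}}dH_s=\mathbb{E}\int_0^T\mathds{1}_{\{\psi_s<-1\}}\gamma_sds=0$, since $\gamma$ is the intensity of the compensator of $H$ and $\psi$ is predictable.

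$\Lambda$ is a stochastic integral with respect to local martingales, so it is a non-negative local martingale, hence a supermartingale. To obtain the martingale property together with the $L^2$ bound on the supremum, we will compute $d\Lambda^2$ by Itô's formula for RCLL semimartingales, using $d[M]=dH$ from the proof of Proposition \ref{Martingale Proposition}:
$$
d(\Lambda_t^2)=\Lambda_{t-}^2\big(\varphi_t^2dt+\psi_t^2\gamma_tdt\big)+d(\text{local martingale}).
$$
Thus $\Lambda^2\mathcal{E}\big(-\int(\varphi^2+\psi^2\gamma)ds\big)$ is a non-negative local martingale, i.e. $\Lambda_t^2=\exp\big(\int_0^t(\varphi^2_s+\psi^2_s\gamma_s)ds\big)\,\mathcal{N}_t$ with $\mathcal{N}$ a non-negative local martingale and hence a supermartingale with $\mathbb{E}\mathcal{N}_t\le1$. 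Since $\int_0^T(\varphi^2+\psi^2\gamma)ds\le C$, we get $\sup_t\mathbb{E}\Lambda_{\sigma}^2\le e^C$ uniformly over stopping times $\sigma$ (by localization and Fatou). This gives $\mathbb{E}[\Lambda]_T=\mathbb{E}\int_0^T\Lambda_{s-}^2(\varphi_s^2ds+\psi_s^2dH_s)<\infty$; to see this, localize and use the estimate on $\mathbb{E}\Lambda^2$ with bounded integrands. The local martingale $\Lambda$ is therefore a square-integrable martingale, and Doob's inequality yields $\mathbb{E}\sup_t|\Lambda_t|^2\le4\,\mathbb{E}\Lambda_T^2<\infty$. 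The step we expect to be the main obstacle is this passage from local to true martingale. We will try the quadratic-variation localization route first, since Novikov/Lepingle–Mémin type criteria for the jump part require the same boundedness of $\int\psi^2\gamma\,ds$ anyway.

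For the second statement, $\Lambda^\star_t=\Lambda_t\exp\big(\int_0^t\delta_sds\big)$. Indeed, the exponential factor is continuous and of finite variation, so the product rule gives the stated SDE with no cross-variation term. Non-negativity is inherited from $\Lambda$. $\Lambda^\star$ is the product of a martingale and a continuous adapted finite variation process, hence a special RCLL semimartingale, with canonical decomposition $\Lambda^\star_t=1+\int_0^t\Lambda^\star_{s-}\delta_sds+\int_0^t\Lambda^\star_{s-}(\varphi_sdB_s+\psi_sdM_s)$, whose drift is predictable. Finally, $\sup_t|\Lambda^\star_t|^2\le e^{2\|\int_0^T|\delta_s|ds\|_\infty}\sup_t|\Lambda_t|^2$, which is integrable by the first part.
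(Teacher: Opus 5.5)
Your proof is correct. The paper does not actually prove this proposition. It only says the argument is as in Proposition 3 of Dumitrescu, Quenez and Sulem, and it records the Dol\'eans-Dade formula, from which non-negativity is read off, in the remark that follows.

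Your sign argument is the same as that remark. You also supply a point the paper glosses over: the hypothesis $\psi\ge -1$ holds only $\gamma_t\,dt\otimes d\mathbb{P}$-a.e., and the compensator identity $\mathbb{E}\int_0^T\mathds{1}_{\{\psi_s<-1\}}dH_s=\mathbb{E}\int_0^T\mathds{1}_{\{\psi_s<-1\}}\gamma_s\,ds$ turns this into $\mathbb{P}(\tau\le T,\ \psi_\tau<-1)=0$, which is what the jump factor $1+\psi_\tau$ needs.

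For the martingale property and the $L^2$ bound, you do not invoke an external criterion for stochastic exponentials. Instead you give a self-contained argument. The factorization $\Lambda^2=e^{a}\mathcal{N}$, with $a=\int_0^\cdot(\varphi_s^2+\psi_s^2\gamma_s)\,ds\le C$ and $\mathcal{N}$ a non-negative local martingale, yields $\mathbb{E}\Lambda_\sigma^2\le e^{C}$ for every stopping time $\sigma$, with an explicit constant.

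The one step you leave loose ("localize and use the estimate with bounded integrands") is easy to make precise. Since $\Lambda^2-[\Lambda]$ is a local martingale, along a reducing sequence $\sigma_n$ you get $\mathbb{E}[\Lambda]_{T\wedge\sigma_n}\le\mathbb{E}\Lambda^2_{T\wedge\sigma_n}\le e^{C}$. Monotone convergence then gives $\mathbb{E}[\Lambda]_T<\infty$. You could even skip this step: the uniform bound over stopping times already makes $\Lambda$ of class (D), hence a uniformly integrable martingale, and Doob's inequality gives $\mathbb{E}\sup_t\Lambda_t^2\le 4e^{C}$ directly.

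The treatment of $\Lambda^\star=\Lambda\exp(\int_0^\cdot\delta_s\,ds)$ is fine as written. That covers the product rule with no cross-variation, the predictable drift giving specialness, and the $L^2$ bound via the bounded factor $e^{2\int_0^T|\delta_s|ds}$.
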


\begin{remark}
	The process $(\Lambda^{\star}_t)_{t \leq T}$ defined in Proposition \ref{Integr problm solved},	satisfies the so called Dol\'{e}ans-Dade formula (see \cite[Section II.8]{Protter} for more details)
	\begin{equation*}
		\begin{split}
			\Lambda^{\star}_t=e^{\int_{0}^{t}\delta_s ds} \exp\left\{\int_{0}^{t}\varphi_s dB_s-\dfrac{1}{2}\int_{0}^{t}\left|\varphi_s \right|^2 ds \right\}e^{-\int_{0}^{t}\psi_s\gamma_sds}\left(1+\psi_{\tau}\mathds{1}_{\{\tau \leq t\}}\right),~t \in [0,T].
		\end{split}
	\end{equation*}
	In particular, when $\psi \geq -1$ (resp. $\psi>-1$), we derive that $\Lambda^{\star} \geq 0$ (resp. $\Lambda^{\star} > 0$).
	\label{remark of positivity}
\end{remark}

Next, it is important to note that in our framework the comparison theorem cannot  automatically be obtained, unlike the simpler case of Brownian filtration. Moving forward, we assume the following about the driver $f$ and the corresponding stochastic Lipschitz parameters: 
\begin{itemize}
	\item[\textbf{(H4)}]   $\int_{0}^{T}\left\{\mu_s+\theta^2_s+\left| \nu_s\right|^2  \gamma_s ds\right\}ds$ is a bounded random variable, and that there exists a map 
	$$
	\lambda : \Omega \times [0,T] \times \mathbb{R}^4 \rightarrow \mathbb{R};~ (\omega,t,y,z,u_1,u_2) \mapsto \lambda^{y,z,u_1,u_2}_t(\omega)
	$$
	$\mathcal{P}\otimes\mathcal{B}(\mathbb{R}^4)$-measurable with 
	$$
	\left|\lambda^{y,z,u_1,u_2}_t \right|^2 \leq \nu_t ~\text{ and }~~ \lambda^{y,z,u_1,u_2}_t \geq  -1,~d\mathbb{P}\otimes dt\text{-a.e.,} 
	$$
	satisfying $d\mathbb{P}\otimes dt$-a.s., for all $(y,z,u_1,u_2) \in \mathbb{R}^4$;
	\begin{equation}
		f(t,y,z,u_1)-f(t,y,z,u_2) \leq \lambda^{y,z,u_1,u_2}_t(u_1-u_2)\gamma_t.
		\label{Monotonicity}
	\end{equation}
\end{itemize}
\begin{remark}
	Note that, if \eqref{Monotonicity} is true we also have:
	$$
	f(t,y,z,u_1)-f(t,y,z,u_2) \geq \lambda^{y,z,u_2,u_1}_t(u_1-u_2)\gamma_t.
	$$
	It suffices to change the role of $u_1$ and $u_2$ in $\lambda$.
\end{remark}

Let $(Y^j,Z^j,K^j,U^j)$ be the unique solution of the RBSDE \eqref{basic equation lower} associated
with data $(\xi^j,f^j,\L^j)$, for $j = 1, 2$. Then we have the following comparison result:
\begin{theorem}
	Assume that:
	\begin{itemize}
		\item $\xi^1 \leq \xi^2$.
		\item $
		f^1\left(t, Y_t^2, Z_t^2,U^2_t\right) \leq f^2\left(t, Y_t^2, Z_t^2,U^2_t\right),~t \in [0,T],~d\mathbb{P}\otimes dt\text{-a.e.}
		$
		\item $\L^1_t \leq \L^2_t$, $\forall t \in [0,T]$ a.s.
	\end{itemize}
	Then, $Y^1_t \leq Y^2_t$, $\forall t \in [0,T]$ a.s.
	\label{Comparison theorem}
\end{theorem}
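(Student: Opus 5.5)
We apply a generalized It\^o (Gal'chouk--Lenglart) formula to $(\bar Y^+)^2$ with $\bar Y := Y^1 - Y^2$, rather than the linearization/$f$-expectation route. The Skorokhod condition makes the reflection terms have the right sign exactly on the set where $\bar Y>0$. The differences are
\[
\bar Y:=Y^1-Y^2,\qquad \bar Z:=Z^1-Z^2,\qquad \bar U:=U^1-U^2,\qquad \bar K:=K^1-K^2,\qquad \bar\xi:=\xi^1-\xi^2\le 0 .
\]
By the decomposition used in Corollary \ref{Application of Ito formula} (adapted to $x\mapsto (x^+)^2$, as in Gal'chouk--Lenglart), we write for $t\in[0,T]$:
\begin{align*}
e^{\beta A_t}(\bar Y_t^+)^2&+\beta\int_t^T e^{\beta A_s}(\bar Y_s^+)^2dA_s+\int_t^T e^{\beta A_s}\mathds{1}_{\{\bar Y_s>0\}}|\bar Z_s|^2ds\\
&\le 2\int_t^T e^{\beta A_s}\bar Y_s^+\big(f^1(s,Y^1_s,Z^1_s,U^1_s)-f^2(s,Y^2_s,Z^2_s,U^2_s)\big)ds\\
&\quad+2\int_t^T e^{\beta A_s}\bar Y^+_{s-}d\bar K^{\ast}_s+2\sum_{t\le s<T}e^{\beta A_s}\bar Y^+_s\Delta_+\bar K_s\\
&\quad-\sum_{t<s\le T}e^{\beta A_s}\big[(\bar Y^+_s)^2-(\bar Y^+_{s-})^2-2\bar Y^+_{s-}\Delta_-\bar Y_s\big]+(\text{martingale}).
\end{align*}
Here the terminal term vanishes because $\bar\xi^+=0$, and the right-jump convexity terms have been dropped since they are nonnegative.

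The reflection terms are handled as follows. On $\{\bar Y_{s-}>0\}$ we have $Y^1_{s-}>Y^2_{s-}\ge \L^2_{s-}\ge \L^1_{s-}$, so by the Skorokhod condition (iii) $dK^{1,\ast}$ does not charge this set. Hence $\int \bar Y^+_{s-}d\bar K^\ast_s=-\int\bar Y^+_{s-}dK^{2,\ast}_s\le0$. The same argument at points $s$ with $\bar Y_s>0$ gives $Y^1_s>\L^1_s$, so $\Delta_+K^1_s=0$, and the sum term is $\le0$. Note the sign convention: in \eqref{basic equation lower} $K$ enters with $+$, so $d\bar Y$ contains $-d\bar K$. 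We would keep track of this carefully; with the correct sign the contributions are $+2\bar Y^+ dK^2$-type terms with the minus sign in front, which are indeed nonpositive.

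For the driver we split
\[
f^1(Y^1,Z^1,U^1)-f^2(Y^2,Z^2,U^2)=\big[f^1(Y^1,Z^1,U^1)-f^1(Y^2,Z^2,U^2)\big]+\big[f^1-f^2\big](Y^2,Z^2,U^2),
\]
and the second bracket is $\le0$ by assumption. The first bracket is bounded by $\mu|\bar Y|+\theta|\bar Z|+\nu\gamma|\bar U|$ using \textbf{(H2)}.

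The main obstacle is the $U$-part, namely absorbing $2\bar Y^+\nu\gamma|\bar U|$ using only the jump term at $\tau$. At $s=\tau$ we have $\Delta_-\bar Y_\tau=\bar U_\tau$, and the convexity defect $(\bar Y^+_{\tau-}+\bar U_\tau)^{+2}-(\bar Y^+_{\tau-})^2-2\bar Y^+_{\tau-}\bar U_\tau$ is not bounded below by $|\bar U|^2\mathds{1}_{\{\bar Y_{\tau-}>0\}}$ when $\bar U$ is very negative. Since $\bar U$ is predictable, however, its compensated version integrates against $\gamma ds$. We therefore plan to use \textbf{(H4)} instead: write $f^1(Y^2,Z^2,U^1)-f^1(Y^2,Z^2,U^2)\le\lambda_s\bar U_s\gamma_s$ with $\lambda\ge-1$. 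After taking expectations, the term $2\bar Y^+_{s-}\lambda_s\bar U_s\gamma_s$ combines with the compensator of the jump defect, $\gamma_s[(\bar Y^+_{s-}+\bar U_s)^{+2}-(\bar Y^+_{s-})^2-2\bar Y^+_{s-}\bar U_s]$. An elementary inequality, valid because $\lambda\ge-1$ and $|\lambda|^2\le\nu$, reduces the combination to at most $C\nu\gamma(\bar Y^+_{s-})^2$.

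If this pointwise inequality proves awkward, we fall back on linearization. Define $\Lambda^\star$ via Proposition \ref{Integr problm solved} with $\delta=$ the $y$-difference quotient, $\varphi=$ the $z$-difference quotient and $\psi=\lambda\ge-1$, so that $\Lambda^\star\ge0$ by Remark \ref{remark of positivity}. Then apply the product rule to $\Lambda^\star\bar Y$ on $[t,\sigma]$, where $\sigma:=\inf\{s\ge t:\bar Y_s\le0\}$, using the fact that $\bar K$ acts nonpositively before $\sigma$. Square-integrability of $\Lambda^\star$ makes the local martingales true martingales, and the conclusion is $\bar Y_t\le0$.

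Finally, with $\beta$ large, the $\mu$ and $\theta$ terms are absorbed via $2ab\le\epsilon a^2+b^2/\epsilon$ into the $\beta\,dA$ and $|\bar Z|^2$ terms. The stochastic integrals are true martingales by the argument of Proposition \ref{Martingale Proposition}. Taking expectations yields $\mathbb{E}[e^{\beta A_t}(\bar Y_t^+)^2]\le0$, so $Y^1_t\le Y^2_t$ a.s. for every $t$, and in fact at every stopping time. The section theorem then gives the inequality for all $t$ simultaneously, outside a null set.
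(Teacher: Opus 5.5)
Your primary argument is sound in outline, and it takes a genuinely different route from the paper's.

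\textbf{How the two routes differ.} The paper linearizes and builds the density $\Lambda$ from $(\varphi,\lambda)$ via Proposition~\ref{Integr problm solved}. It then passes to $\mathbb{Q}$, so that the $Z$- and $U$-drifts disappear into $\bar B$ and $\bar M$. Only after that does it apply the Tanaka-type formula and It\^o's formula to $e^{\beta A}(\bar Y^+)^2$ and take $\mathbb{Q}$-expectations. This requires the boundedness of $\int_0^T(\mu_s+\theta_s^2+\nu_s^2\gamma_s)\,ds$ in \textbf{(H4)}, so that $\Lambda$ is a square-integrable martingale. It also requires the stochastic integrals to be $\mathbb{Q}$-martingales.

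You stay under $\mathbb{P}$ throughout:
\begin{itemize}
\item The $Z$-term is absorbed by Young's inequality against $\mathds{1}_{\{\bar Y_s>0\}}|\bar Z_s|^2$.
\item The $U$-term is absorbed by the compensator $\gamma_sJ_s\,ds$ of the left-jump convexity defect $J$ at $\tau$. This is legitimate because $J_s$ is a nonnegative predictable function of $(\bar Y_{s-},\bar U_s)$. Moreover, the predictable processes $K^{j,\ast}$ a.s.\ do not jump at the totally inaccessible time $\tau$, so $\Delta_-\bar Y_\tau=\bar U_\tau$.
\item The martingale terms are handled exactly as in Proposition~\ref{Martingale Proposition}.
\end{itemize}
So from \textbf{(H4)} you use only the sign condition $\lambda\ge-1$, not its boundedness part. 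Your treatment of the reflection terms via the Skorokhod conditions matches \eqref{minimality condition K ast}--\eqref{minimality condition K right jumps}.

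\textbf{The step that does not close as stated.} Bounding the $U$-contribution through $\lambda$ alone gives at most $4\lambda_s^2\gamma_s(\bar Y^+_{s-})^2\le 4\nu_s\gamma_s(\bar Y^+_{s-})^2$. However, $\nu\gamma$ is not dominated by $\alpha^2=\mu+\theta^2+\nu^2\gamma$ (take $\nu$ small and $\gamma=\nu^{-2}$), so $\beta\,dA$ cannot absorb it.

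The fix is to use both available bounds on $D_s:=f^1(s,Y^2_s,Z^2_s,U^1_s)-f^1(s,Y^2_s,Z^2_s,U^2_s)$:
\begin{itemize}
\item $D_s\le\nu_s\gamma_s|\bar U_s|$, by \textbf{(H2)};
\item $D_s\le\lambda_s\bar U_s\gamma_s\le\gamma_s|\bar U_s|$ when $\bar U_s<0$.
\end{itemize}
Put $y=\bar Y_{s-}>0$, $u=\bar U_s$ and $m=\min(1,\nu_s)$.
\begin{itemize}
\item If $u\ge -y$, then $J_s=u^2$ and $2y\nu_s|u|-u^2\le\nu_s^2y^2$.
\item If $u<-y$, then $J_s=2y|u|-y^2$ and $2ym|u|-2y|u|+y^2\le(2m-1)y^2\le m^2y^2\le\nu_s^2y^2$.
\end{itemize}
Hence $2\bar Y^+_{s-}D_s-\gamma_sJ_s\le\nu_s^2\gamma_s(\bar Y^+_{s-})^2$. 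Combined with $2\mu(\bar Y^+)^2+2\theta\bar Y^+|\bar Z|\le(2\mu+2\theta^2)(\bar Y^+)^2+\tfrac12\mathds{1}_{\{\bar Y>0\}}|\bar Z|^2$, the driver terms are bounded by $2\alpha^2(\bar Y^+)^2+\tfrac12\mathds{1}_{\{\bar Y>0\}}|\bar Z|^2$. So $\beta\ge 2$ suffices, and your fallback is unnecessary.

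\textbf{A minor slip.} The defect is $J_\tau=\big((\bar Y_{\tau-}+\bar U_\tau)^+\big)^2-(\bar Y^+_{\tau-})^2-2\bar Y^+_{\tau-}\bar U_\tau$, not the version with $\bar Y^+_{\tau-}$ inside the first square. Your version overstates it on $\{\bar Y_{\tau-}<0\}$. On that set the true defect is nonnegative and the driver term vanishes, so it can simply be dropped.
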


\begin{proof}
	Let us set $\bar{\mathfrak{G}}=\mathfrak{G}^1-\mathfrak{G}^2$ for $\mathfrak{G}^j \in \{Y^j,Z^j,K^j,U^j,\xi^j,\L^j\}$ and $j \in \{1,2\}$. Then a standard calculation allows us to obtain
	\begin{equation}
		\bar{Y}_t=\bar{\xi}+\int_{t}^{T}\left(\delta_s\bar{Y}_{s}+\varphi_s \bar{Z}_s+\psi_s+\phi_s\right)ds+\left(\bar{K}_T-\bar{K}_t\right)-\int_{t}^{T}\bar{Z}_s dB_s-\int_{t}^{T}\bar{U}_s dM_s,
		\label{IMN DYN}
	\end{equation}
	with 
	\begin{itemize}
		\item $\delta_s=\left(\bar{Y}_{s-}\right)^{-1}\mathds{1}_{\{\bar{Y}^{}_{s-} \neq 0\}}\left(f^1(s,Y^1_{s-},Z^1_s,U^1_s)-f^1(s,Y^2_{s-},Z^1_s,U^1_s)\right)$,
		\item $\varphi_s=\left(\bar{Z}_s\right)^{-1}\mathds{1}_{\{\bar{Z}_{s-} \neq 0\}}\left(f^1(s,Y^2_{s-},Z^1_s,U^1_s)-f^1(s,Y^2_{s-},Z^2_s,U^1_s)\right)$,
		\item  $\psi_s=f^1(t,Y^2_{t-},Z^2_t,U^1_t)-f^1(t,Y^2_{t-},Z^2_t,U^2_t)$,
		\item $\phi_s=f^1(t,Y^2_{t-},Z^2_t,U^2_t)-f^2(t,Y^2_{t-},Z^2_t,U^2_t)$.
	\end{itemize}
	
	By definition, the process $(\varphi_t)_{t \leq T}$ is $\mathcal{P}$-measurable and satisfies  $\left| \varphi_t\right| \leq  \theta_t$, $\forall t \in [0,T]$ a.s. Therefore, $\int_{0}^{T}\left| \varphi_t\right|^2dt$ is a bounded $\mathcal{F}_T$-measurable random variable. Next, using the assumption \textbf{(H4)}, we derive the existence of a $\mathcal{P}\otimes \mathcal{B}(\mathbb{R}^d)$-measurable mapping $\lambda:\Omega \times [0,T]\times \mathbb{R}^4;(\omega,t,y,z,u_1,u_2)\mapsto\lambda_t^{y,z,u^1,u^2}(\omega)$ such that $\lambda_t^{y,z,u^1,u^2} \geq -1$ and $
	\psi_t \leq \lambda_t^{Y^2_t,Z^2_t,U^1_t,U^2_t}\bar{U}_t\gamma_t
	$.
	
	Now, we defined the process $(\Lambda_{t})_{t \in [0,T]}$ as the unique solution of the following forward SDE:
	$$
	d\Lambda_{t}=\Lambda_{t-}\left(\varphi_t dB_t+\lambda_t^{Y^2_t,Z^2_t,U^1_t,U^2_t}dM_t\right),\quad \Lambda_{0}=1.
	$$
	Note that as, $\int_{0}^{T}\big\{\varphi^2_s+\big| \lambda_s^{Y^2_s,Z^2_s,U^1_s,U^2_s}\big|^2\gamma_s \big\}ds$ is a bounded random variable, then from Proposition \ref{Integr problm solved}, we deduce that $\mathbb{E}\sup_{0 \leq t \leq T}\left| \Lambda_{t}\right|^2<+\infty $ and $\Lambda \geq  0$. Henceforth, $\Lambda_{t,\cdot}$ defines a positive square integrable martingale. Using Proposition 3.1 in \cite{kusuoka1999remark}, we deduce that there exists a probability measure $\mathbb{Q}$ such that $\bar{B}_t:=B_t-\int_{0}^{t} \varphi_s ds$ is a one-dimensional Brownian motion and $\bar{M}_t:=M_t-\int_{0}^{t}\lambda_s^{Y^2_s,Z^2_s,U^s_t,U^2_s} \gamma_sds $ is an $(\mathbb{F},\mathbb{Q})$-martingale with default jump, where the new equivalent probability measure $\mathbb{Q}$ is defined from the process $\left(\Lambda_t\right)_{t \leq T}$ by the so called Dol\'{e}ans-Dade formula, as follows:
	\begin{equation*}
		\begin{split}
			d\mathbb{Q}:=&\exp\left\{\int_{0}^{t}\varphi_s dB_s-\dfrac{1}{2}\int_{0}^{t}\left|\varphi_s \right|^2ds \right\}\exp\left\{-\int_{0}^{t}\lambda_s^{Y^2_s,Z^2_s,U^1_s,U^2_s}\gamma_sds\right\}\\
			&\quad\left(1+\lambda_{\tau}^{Y^2_\tau,Z^2_\tau,U^1_\tau,U^2_\tau}\mathds{1}_{\{\tau \leq t\}}\right)d\mathbb{P}.
		\end{split}
	\end{equation*}
	Then, from \eqref{IMN DYN}, we deduce that
	\begin{equation}
		\bar{Y}_t=\bar{\xi}+\int_{t}^{T}\left(\delta_s \bar{Y}_{s-}+\bar{\psi}_s+\phi_s\right)ds+\left(\bar{K}^{}_T-\bar{K}^{}_t\right)-\int_{t}^{T}\bar{Z}_s d\bar{B}_s-\int_{t}^{T}\bar{U}_s d\bar{M}_s,
		\label{Strong optional semimartingale Tanaka}
	\end{equation}
	with $
	\bar{\psi_s}:=\psi_s-\lambda_s^{Y^2_s,Z^2_s,U^s_t,U^2_s}\bar{U}_s\gamma_s$ for $s \in [0,T]$
	
	By applying Lemma \ref{Tanaka-type Formula} to the convex function $\Phi(x) = x^{+}$ and the $\mathbb{F}$-optional semimartingale (\ref{Strong optional semimartingale Tanaka}) with  $\bar{Y}^{\ast}_t=\bar{\xi}+\int_{t}^{T}\left(\delta_s\bar{Y}_{s-}+\bar{\varphi}_s +\phi_s\right)ds+\left(\bar{K}^{\ast}_T-\bar{K}^{\ast}_t\right)-\int_{t}^{T}\bar{Z}_s dB_s-\int_{t}^{T}\bar{U}_s dM_s$, $\Delta_{+} \bar{Y}_s=-\Delta_{+} \bar{K}_s$, we find that there exists a non-decreasing process $\left(\bar{\mathcal{L}}_t\right)_{t \leq T}$ with regulated trajectories such that:
	\begin{equation}
		\begin{split}
			\bar{Y}^{+}_t=\bar{Y}^{+}_0&-\int_{0}^{t}\mathds{1}_{\{Y^1_{s}>Y^2_{s}\}}\left(\delta_s \bar{Y}_{s}+\bar{\psi}_s+\phi_s\right)ds+\int_{0}^{t}\mathds{1}_{\{Y^1_{s-}>Y^2_{s-}\}} d \bar{K}^{\ast}_s\\
			&\qquad+\int_{0}^{t}\mathds{1}_{\{Y^1_{s}>Y^2_{s}\}}\bar{Z}_s d\bar{B}_s+\int_{0}^{t}\mathds{1}_{\{Y^1_{s-}>Y^2_{s-}\}}\bar{U}_s d\bar{M}_s+\bar{\mathcal{L}}_t,\quad t \in [0,T].
		\end{split}
		\label{Y plus dynamic}
	\end{equation}
	Now, applying Theorem \ref{Ito's formula Theorem} to the dynamics of the process $\left(\bar{Y}^{+}_t\right)_{t \leq T}$ defined by (\ref{Y plus dynamic}), we obtain, for any stopping time $\eta \in \mathcal{T}_{[0,T]}$,
	\begin{equation}
		\begin{split}
			&e^{\beta A_{\eta}} \left| \bar{Y}^{+}_\eta \right|^2+\beta \int_{\eta}^{T} e^{\beta A_s} \left| \bar{Y}^{+}_s \right|^2 dA_s\\
			&   \leq e^{\beta A_T} \left|\bar{ \xi}^{+}_T  \right|^2+2 \int_{\eta}^{T} e^{\beta A_s} \bar{Y}^{+}_{s}\left(\delta_s \bar{Y}_{s}+\bar{\psi}_s+\phi_s\right) ds\\
			&\quad +2\int_{\eta}^{T} e^{\beta A_s }\bar{Y}^{+}_{s-}\mathds{1}_{\{Y^1_{s-}>Y^2_{s-}\}}d\bar{K}^{\ast}_s +2\sum_{\eta \leq s < T} e^{\beta A_s} \bar{Y}^{+}_s \mathds{1}_{\{Y^1_{s}>Y^2_{s}\}} \Delta_{+}\bar{K}_s\\
			&\quad-2 \int_{\eta}^{T} e^{\beta A_s+\lambda s} \bar{Y}^{+}_{s}\bar{Z}_sd\bar{B}_s-2\int_{\eta}^{T} e^{\beta A_s} \bar{Y}^{+}_{s-}\bar{U}_s d\bar{M}_s\\
			&\quad-\sum_{\eta <s \leq T}e^{\beta A_s} \left| \Delta_{-} \bar{Y}^{+}_s\right|^2 -\sum_{\eta \leq s < T} e^{\beta A_s+\lambda s} \left|\Delta_{+}\bar{Y}^{+}_s \right|^2-\int_{\eta}^{T}e^{\beta A_s }\bar{Y}^{+}_{s-}d\bar{\mathcal{L}}_s,
		\end{split}
		\label{both sides}
	\end{equation}
	Given the minimality condition in the RBSDE (\ref{Reflected BSDE with one barrier}) for the reflection processes $K^1$ and $K^2$, along with  assumption $\L^1 \leq \L^2$ and Lemma \ref{Reflecting property}, we can deduce
	\begin{equation}
		\mathds{1}_{\{Y^1_{s-}>Y^2_{s-}\}}d\bar{K}^{\ast}_s =\mathds{1}_{\{Y^1_{s-}>Y^2_{s-}\}}\left(dK^{1,\ast}_s-dK^{2,\ast}_s\right)= -\mathds{1}_{\{Y^1_{s-}>Y^2_{s-}\}}dK^{2,\ast}_s\leq 0,
		\label{minimality condition K ast}
	\end{equation}
	and
	\begin{equation}
		\mathds{1}_{\{Y^1_{s}>Y^2_{s}\}} \Delta_{+}\bar{K}_s=\mathds{1}_{\{Y^1_{s}>Y^2_{s}\}} \left(\Delta_{+}K^1_s- \Delta_{+}K^2_s\right)= -\mathds{1}_{\{Y^1_{s}>Y^2_{s}\}} \Delta_{+}K^2_s\leq 0.
		\label{minimality condition K right jumps}
	\end{equation}
	Returning to \eqref{both sides}, we can exploit the negativity of $\bar{\xi}$, $\phi$ and $\bar{\psi}$, the fact that $ \delta_t  \leq \mu_t \leq \alpha_t^2$ and utilize inequalities \eqref{minimality condition K ast} and \eqref{minimality condition K right jumps}, by then taking the expectation on both sides with respect to the new measure $\mathbb{Q}$ denoted by $\bar{\mathbb{E}}$, we obtain for all $t \in [0,T]$ and any $\beta>2$
	$$
	\bar{\mathbb{E}}e^{\beta A_{\eta}} \left| \bar{Y}^{+}_\eta \right|^2 \leq 0.
	$$
	It follows that $\bar{Y}^{+}_\eta=0$ for any $\eta \in \mathcal{T}_{[0,T]}$. Then, as the process $\bar{Y}$ is optional and using the cross section theorem, we deduce that $\bar{Y}_t \leq 0$, $\forall t \in [0,T]$ $\mathbb{Q}$-a.s. and so $Y^1_t \leq Y^2_t$, $\forall t \in [0,T]$ $\mathbb{P}$-a.s.
\end{proof}

\begin{remark}
	\begin{itemize}
		\item If $\L^j\equiv-\infty$ or $\zeta^j\equiv+\infty$ for $j \in \{1,2\}$, then $dK^j=0$ for $j \in \{1,2\}$ and the comparison theorem is still applied to the classical BSDE \eqref{basic equation : classical BSDE} as well.
		\item In the case where the generator $f$ does not depend on the $u$ variable, the comparison result \ref{Comparison theorem} still holds without the need to assume the \textbf{(H4)} assumption.
	\end{itemize}
	\label{Comparison BSDE}
\end{remark}

\section{BSDE with irregular barrier and a related optimal stopping problem with $\mathcal{E}^f$-expectations}
\label{sec 5}
We first recall some definitions needed in the current section. We point out that here and through the current part we assume that given a terminal time $0<T <+\infty$ and a generator $f$ that satisfies conditions \textbf{(H2)} and \textbf{(H4)} with a $(-1,\infty)$-valued $\mathcal{P}\otimes\mathcal{B}(\mathbb{R}^4)$-measurable mapping $\lambda$.\\
We start with the so called $f$-conditional expectation, $\mathcal{E}^f$-expectation or $f$-evaluation in the terminology of Peng \cite{Peng2004}. In this section, we choose to work with the term $\mathcal{E}^f$-expectation.
\begin{definition}[$\mathcal{E}^f$-expectation]
	Let $T^{\prime} \in [0,T]$ be a deterministic time. Let $\xi \in \mathbb{L}^2_{\beta}(\mathcal{F}_{T'})$. The $\mathcal{E}^f$-expectation of $\xi$ denoted by $\left(\mathcal{E}^f_{t,T'}(\xi)\right)_{t \in [0,T']}$ is defined as the the first component of the BSDE \eqref{basic equation : classical BSDE} with default jump associated with terminal time $T'$ and data $(\xi,f)$. More generally, for each time $T'\in [0,T]$ and each terminal condition $\xi \in \mathbb{L}^2_{\beta}(\mathcal{F}_{T'})$, due to Theorem \ref{Existence and uniqueness unconstrained BSDE}, we may define an operator $\mathcal{E}^f: \left(T',\xi\right) \to \mathcal{E}^f_{\cdot,T'}(\xi)$, and we say that $\mathcal{E}^f_{\cdot,T'}(\xi)$ is the $\mathcal{E}^f$-expectation process of $\xi$.
\end{definition}

\begin{remark}
	\begin{itemize}
		\item The previous operator can be generalized to the case of stopping times $\eta$ in the class $\mathcal{T}_{[0,T]}$. Recall that $Y$ is a solution of the BSDE with terminal time $\eta \in \mathcal{T}_{[0,T]}$, terminal value $\xi$ and coefficient $f$ if $Y=Y^{\star}$ where $Y^{\star}$ is the solution of the BSDE \eqref{basic equation : classical BSDE} associated with data $(\xi,f\mathds{1}_{\rrbracket0,\eta\rrbracket})$ (see also \cite[Lemma 3.3]{topolewski2018reflected} for a related study).
		\item A process $Y \in \mathcal{S}^2_{\beta}$ is a strong $\mathcal{E}^f$-martingale on $[\sigma,\eta]$ with $\sigma,\eta \in \mathcal{T}_{[0,T]}$ and $\sigma \leq \eta$ a.s., if and only if, $Y=Y^{\star}$ on $[\sigma,\eta]$, where $Y^{\star}$ is the solution of the BSDE \eqref{basic equation : classical BSDE} associated with terminal time $\eta$ and data $(Y_{\eta},f)$.
	\end{itemize}
	\label{Remark of the ending stp}
\end{remark}
Following this remark, we next gives the notion of strong $\mathcal{E}^f$-(sub,  super)martingale in the context of driver $f$ satisfying a stochastic Lipschitz property.
\begin{definition}
	Let $Y \in \mathcal{S}^2_{\beta}$.
	\begin{itemize}
		\item  The process $Y$ is said to be a strong $\mathcal{E}^f$-supermartingale (resp. $\mathcal{E}^f$-submartingale), if $\mathcal{E}^f_{\sigma,\eta}(Y_{\eta}) \leq Y_\sigma$ (resp. $\mathcal{E}^f_{\sigma,\eta}(Y_{\eta}) \geq Y_\sigma$) a.s. on $\sigma \leq \eta$, for all $\sigma,\eta \in \mathcal{T}_{[0,T]}$.
		\item The process $Y$ is said to be a strong $\mathcal{E}^f$-martingale, if it is both a strong $\mathcal{E}^f$-supermartingale and a strong $\mathcal{E}^f$-submartingale.
	\end{itemize}
	Let $\sigma,\eta \in \mathcal{T}_{[0,T]}$.
	\begin{itemize}
		\item The process $Y$ is said to be a strong $\mathcal{E}^f$-supermartingale (resp. $\mathcal{E}^f$-submartingale) on $[\sigma,\eta]$, if for all $\sigma^{\star}, \eta^{\star} \in \mathcal{T}_{[0,T]}$ such that $\sigma \leq \sigma^{\star} \leq \eta^{\star} \leq \eta$ a.s., we have $\mathcal{E}^f_{\sigma^{\ast},\eta^{\ast}}(Y_{\eta^{\ast}}) \leq Y_{\sigma^{\ast}}$ (resp. $\mathcal{E}^f_{\sigma^{\ast},\eta^{\ast}}(Y_{\eta^{\ast}}) \geq Y_{\sigma^{\ast}}$) a.s. Finally, the notion of $\mathcal{E}^f$-martingale on $[\sigma,\eta]$ is defined similarly.
	\end{itemize}
	\label{definition strong on}
\end{definition}

Finally, we present the definition of right-upper semi-continuity for progressively measurable processes.
\begin{definition}
	A progressiveness measurable process $\left(\L_t\right)_{t \leq T}$ is said to be right-upper semi-continuous (r.u.s.c for short) along stopping times, if for all $\eta \in \mathcal{T}_{[0,T]}$ and each non-increasing sequence of stopping times $\{\eta_n\}_{n \in \mathbb{N}} \subset \mathcal{T}_{[0,T]}$ such that $\eta_n \downarrow \eta$ a.s., we have $\L_{\eta} \geq \limsup_{n \to+\infty} \L_{\eta_n}$ a.s.
\end{definition}
\begin{remark}
	If $\left(\L_t\right)_{t \leq T}$ is a progressiveness measurable regulated process, the right-upper semi-continuity is equivalent to  $\L_{\eta} \geq  \L_{\eta+}$ for all $\eta \in \mathcal{T}_{[0,T]}$. 
	\label{Remark on left semi-continuity}
\end{remark}

\begin{lemma}
	Let $Y \in \mathcal{S}^2_{\beta}$ be a strong $\mathcal{E}^f$-supermartingale on $[\sigma,\eta]$ with $\sigma,\eta \in \mathcal{T}_{[0,T]}$ such that $\sigma \leq \eta$ a.s. The following two properties are equivalent:
	\begin{itemize}
		\item[(i)] $Y$ is a strong $\mathcal{E}^f$-martingale on $[\sigma,\eta]$.
		\item[(ii)] $Y_{\sigma}=\mathcal{E}^f_{\sigma,\eta}(Y_{\eta})$.
	\end{itemize}
\end{lemma}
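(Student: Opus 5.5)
The implication (i)$\Rightarrow$(ii) is immediate. If $Y$ is a strong $\mathcal{E}^f$-martingale on $[\sigma,\eta]$, then Definition \ref{definition strong on} applies to the pair $\sigma^{\ast}=\sigma$, $\eta^{\ast}=\eta$. This gives both $\mathcal{E}^f_{\sigma,\eta}(Y_{\eta})\leq Y_{\sigma}$ and $\mathcal{E}^f_{\sigma,\eta}(Y_{\eta})\geq Y_{\sigma}$, hence equality. The real work is (ii)$\Rightarrow$(i), and there I would rely on two properties of the $\mathcal{E}^f$-expectation: the flow (consistency) property and the strict comparison principle. Both follow from Theorem \ref{Existence and uniqueness unconstrained BSDE}, Theorem \ref{Comparison theorem} and Remark \ref{Comparison BSDE}, with the driver satisfying \textbf{(H2)} and \textbf{(H4)}.

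\textbf{Flow property.} For stopping times $\sigma\le\sigma^{\ast}\le\eta^{\ast}\le\eta$ and $\xi\in\mathbb{L}^2_{\beta}(\mathcal{F}_{\eta})$, I would show
$$\mathcal{E}^f_{\sigma^{\ast},\eta}(\xi)=\mathcal{E}^f_{\sigma^{\ast},\eta^{\ast}}\big(\mathcal{E}^f_{\eta^{\ast},\eta}(\xi)\big).$$
This uses Remark \ref{Remark of the ending stp}: the solution with terminal time $\eta$ and driver $f\mathds{1}_{\rrbracket 0,\eta\rrbracket}$, restricted to $[0,\eta^{\ast}]$, solves the BSDE with terminal time $\eta^{\ast}$ and terminal value $\mathcal{E}^f_{\eta^{\ast},\eta}(\xi)$. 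Uniqueness of that BSDE then gives the identity.

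\textbf{Strict comparison.} If $\xi^1\le\xi^2$ a.s. and $\mathcal{E}^f_{\nu,\eta^{\ast}}(\xi^1)=\mathcal{E}^f_{\nu,\eta^{\ast}}(\xi^2)$ a.s. for some stopping time $\nu\le \eta^{\ast}$, I would show $\xi^1=\xi^2$ a.s. For this, linearize the difference $\bar Y$ exactly as in the proof of Theorem \ref{Comparison theorem}, with $K\equiv 0$. The driver difference is bounded below by $\delta\bar Y+\varphi\bar Z+\lambda\bar U\gamma$. Introduce the process $\Lambda^{\star}$ of Proposition \ref{Integr problm solved}, built from $\delta,\varphi,\lambda$, to obtain
$$\Lambda^{\star}_{\nu}\,\bar Y_{\nu}\ \le\ \mathbb{E}\big[\Lambda^{\star}_{\eta^{\ast}}\,(\xi^1-\xi^2)\mid\mathcal{F}_{\nu}\big],$$
with $\Lambda^{\star}$ normalized to start at $\nu$. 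Here $\lambda>-1$, and by Remark \ref{remark of positivity} this gives $\Lambda^{\star}>0$. If $\bar Y_\nu=0$ and $\xi^1-\xi^2\le0$, the conditional expectation of a nonpositive variable against a strictly positive weight vanishes, so $\xi^1=\xi^2$ a.s.

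\textbf{Main step.} Assume (ii) and fix $\sigma\le\sigma^{\ast}\le\eta^{\ast}\le\eta$. By the supermartingale property $Y_{\eta^{\ast}}\ge \mathcal{E}^f_{\eta^{\ast},\eta}(Y_{\eta})$. Monotonicity (Theorem \ref{Comparison theorem}) and the flow property then give
$$Y_{\sigma^{\ast}}\ \ge\ \mathcal{E}^f_{\sigma^{\ast},\eta^{\ast}}(Y_{\eta^{\ast}})\ \ge\ \mathcal{E}^f_{\sigma^{\ast},\eta^{\ast}}\big(\mathcal{E}^f_{\eta^{\ast},\eta}(Y_{\eta})\big)=\mathcal{E}^f_{\sigma^{\ast},\eta}(Y_{\eta}).$$
Apply $\mathcal{E}^f_{\sigma,\sigma^{\ast}}$ to this chain and use monotonicity and the flow once more:
$$Y_\sigma\ \ge\ \mathcal{E}^f_{\sigma,\sigma^{\ast}}(Y_{\sigma^{\ast}})\ \ge\ \mathcal{E}^f_{\sigma,\sigma^{\ast}}\big(\mathcal{E}^f_{\sigma^{\ast},\eta^{\ast}}(Y_{\eta^{\ast}})\big)\ \ge\ \mathcal{E}^f_{\sigma,\eta}(Y_{\eta})=Y_\sigma.$$
All these inequalities are therefore equalities. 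Strict comparison, applied first on $[\sigma,\sigma^{\ast}]$, yields $Y_{\sigma^{\ast}}=\mathcal{E}^f_{\sigma^{\ast},\eta^{\ast}}(Y_{\eta^{\ast}})$ a.s. This holds for every such pair, which is exactly (i).

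The main obstacle is the strict comparison. Positivity of the weight $\Lambda^{\star}$ requires $\lambda>-1$ rather than merely $\lambda\ge -1$, which is why the section assumes a $(-1,\infty)$-valued map $\lambda$. It also requires that the stochastic-Lipschitz coefficients satisfy the boundedness assumption \textbf{(H4)}, so that Proposition \ref{Integr problm solved} makes $\Lambda^{\star}$ square integrable and the conditional-expectation representation legitimate.
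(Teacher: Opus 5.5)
Your proof is correct and follows essentially the paper's route: the sandwich $Y_\sigma\ge\mathcal{E}^f_{\sigma,\sigma^{\ast}}(Y_{\sigma^{\ast}})\ge\cdots\ge\mathcal{E}^f_{\sigma,\eta}(Y_\eta)=Y_\sigma$ via monotonicity and the flow property, closed by strict comparison (Theorem~\ref{Comparison for BSDEs}(ii), which the paper cites and you sketch); the only difference is that you treat a general pair $(\sigma^{\ast},\eta^{\ast})$ directly, whereas the paper proves $Y_{\sigma^{\ast}}=\mathcal{E}^f_{\sigma^{\ast},\eta}(Y_\eta)$ and leaves the final flow step implicit. One small slip in your strict-comparison sketch: to get $\Lambda^{\star}_{\nu}\bar Y_{\nu}\le\mathbb{E}[\Lambda^{\star}_{\eta^{\ast}}(\xi^1-\xi^2)\mid\mathcal{F}_{\nu}]$ you need the driver difference bounded \emph{above} (not below) by $\delta\bar Y+\varphi\bar Z+\lambda\bar U\gamma$, which is precisely the one-sided bound supplied by \textbf{(H4)}.
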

\begin{proof}
	\begin{itemize}
		\item Property (i) implies Property (ii):\\
		Follows from Definition \ref{definition strong on}.
		\item Property (ii) implies Property (i):\\
		Let $\sigma^{\star} \in \mathcal{T}_{[\sigma,\eta]}$. From (ii), we have $Y_{\sigma}=\mathcal{E}^f_{\sigma,\eta}(Y_{\eta})$. As $\mathcal{E}^f_{\cdot,\eta}(Y_{\eta})$ is a solution of the \eqref{basic equation : classical BSDE} on $[0,\eta]$, we deduce that $\mathcal{E}^f_{\sigma,\eta}(Y_{\eta})=\mathcal{E}^f_{\sigma,\sigma^{\star}}\left(\mathcal{E}^f_{\sigma^{\star},\eta}(Y_{\eta})\right)=Y_{\sigma}$. Now, using the comparison principal (see Theorem \ref{Comparison for BSDEs}-(i)), we deduce from the property $Y_{\sigma^{\ast}} \geq \mathcal{E}^f_{\sigma^{\ast},\eta}(Y_{\eta})$ (by assumption) that $\mathcal{E}^f_{\sigma,\sigma^{\star}}\left(Y_{\sigma^{\star}}\right)\geq \mathcal{E}^f_{\sigma,\sigma^{\star}}\left(\mathcal{E}^f_{\sigma^{\star},\eta}(Y_{\eta})\right) $. Thus, $Y_{\sigma} \geq \mathcal{E}^f_{\sigma,\sigma^{\star}}\left(Y_{\sigma^{\star}}\right) \geq  \mathcal{E}^f_{\sigma,\sigma^{\star}}\left(\mathcal{E}^f_{\sigma^{\star},\eta}(Y_{\eta})\right) =Y_{\sigma} $ and then $\mathcal{E}^f_{\sigma,\sigma^{\star}}\left(Y_{\sigma^{\star}}\right) =\mathcal{E}^f_{\sigma,\sigma^{\star}}\left(\mathcal{E}^f_{\sigma^{\star},\eta}(Y_{\eta})\right)=Y_{\sigma}$. Now, as $Y_{\sigma^{\star}} \geq \mathcal{E}^f_{\sigma^{\star},\eta}(Y_{\eta})$ and using the strict version of the comparison principal (see Theorem \ref{Comparison for BSDEs}-(ii)), we deduce that $Y_{\sigma^{\star}} = \mathcal{E}^f_{\sigma^{\star},\eta}(Y_{\eta})$.
	\end{itemize}
The the proof is complete.
\end{proof}
\begin{remark}
	Let $(Y,Z,K,U)$ be the unique a solution of the RBSDE \eqref{basic equation lower} (under conditions of Theorem \ref{Lower existence Thm}) in the sense of Definition \ref{Definition of the solution}. From the Theorem \ref{Comparison theorem for GBSDE}, we deduce that the state process $Y$ is a strong $\mathcal{E}^f$-supermartingale. 
	\label{Strong supermartingale property}
\end{remark}

Here and afterward, in order to make the study more simple, we assume that the terminal value of a given RBSDE and the terminal value of the corresponding obstacle coincide at the terminal time. More precisely, for a given terminal condition $\xi$ and an obstacle $\L$, we set that $\xi=\L_T$ and $\L_T \in \mathbb{L}^2_{\beta}$.
\begin{lemma}
	Let $\L$ be a process satisfying condition \textbf{(H3')} with $\L \in \mathcal{S}^2_{2 \beta}$. Assume moreover that $\L$ is r.u.s.c along stopping times.\\
	Let $(Y,Z,K,U)$ be the unique solution of the RBSDE \eqref{basic equation lower} associated with $(\L_T,f,\L)$ in the sense of Definition \ref{Definition of the solution}, which exists in virtue of Theorem \ref{Lower existence Thm}. Let $\varepsilon>0$ and $\sigma \in \mathcal{T}_{[0,T]}$. Let $\eta_{\sigma}^{\varepsilon}$ be defined by
	\begin{equation}
		\eta_{\sigma}^{\varepsilon}:=\inf\left\{t \geq \sigma : Y_t \leq \L_t+\varepsilon\right\}.
		\label{Optila varep Stp}
	\end{equation}
	Then, the following properties holds:
	\begin{itemize}
		\item[(i)] $Y_{\eta_{\sigma}^{\varepsilon}} \leq \L_{\eta_{\sigma}^{\varepsilon}}+\varepsilon$ a.s.
		\item[(ii)]  The state process is a strong $\mathcal{E}^f$-martingale on $[\sigma,\eta_{\sigma}^{\varepsilon}]$.
	\end{itemize}
	\label{Lemma needed for optimal stopping}
\end{lemma}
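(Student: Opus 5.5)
We follow the classical argument of Grigorova et al.\ (Lemma 3.2 in \cite{Grigorova2017}), adapted to the regulated setting and the default-jump filtration. Throughout we use the decomposition $K=K^{\ast}+\sum_{0\le s<\cdot}\Delta_{+}K_s$ and the Skorokhod condition in \eqref{basic equation lower}-(iii).

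\emph{Step 1: property (i).} By the definition of the infimum there is, on $\{\eta_\sigma^\varepsilon<T\}$, a sequence $t_n\downarrow\eta_\sigma^\varepsilon$ with $t_n\ge\eta_\sigma^\varepsilon$ and $Y_{t_n}\le\L_{t_n}+\varepsilon$; possibly $t_n=\eta_\sigma^\varepsilon$ for some $n$, in which case (i) is immediate. Otherwise we pass to the limit. Since $Y$ and $\L$ are regulated, this gives $Y_{\eta_\sigma^\varepsilon+}\le\L_{\eta_\sigma^\varepsilon+}+\varepsilon$. The r.u.s.c.\ property in the form of Remark \ref{Remark on left semi-continuity} yields $\L_{\eta_\sigma^\varepsilon+}\le\L_{\eta_\sigma^\varepsilon}$.

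It then remains to compare $Y_{\eta_\sigma^\varepsilon}$ with $Y_{\eta_\sigma^\varepsilon+}$. From the equation, $Y_t-Y_{t+}=\Delta_{+}K_t\ge 0$. By the Skorokhod condition, $\Delta_{+}K_t>0$ forces $Y_t=\L_t$, and then (i) holds trivially. If $\Delta_{+}K_{\eta_\sigma^\varepsilon}=0$, then $Y_{\eta_\sigma^\varepsilon}=Y_{\eta_\sigma^\varepsilon+}\le\L_{\eta_\sigma^\varepsilon}+\varepsilon$. On $\{\eta_\sigma^\varepsilon=T\}$ we use $Y_T=\L_T$. To avoid working pathwise with the random sequence $t_n$, this should be phrased via the début theorem, since $\eta^\varepsilon_\sigma$ is a stopping time by the usual conditions.

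\emph{Step 2: the reflection does not act on $[\sigma,\eta_\sigma^\varepsilon]$.} On $[\sigma,\eta_\sigma^\varepsilon)$ we have $Y_t>\L_t+\varepsilon$, so the Skorokhod condition gives $\Delta_{+}K_t=0$ for $t\in[\sigma,\eta_\sigma^\varepsilon)$. On $(\sigma,\eta_\sigma^\varepsilon]$ we have $Y_{t-}\ge\L_{t-}+\varepsilon$, so $dK^{\ast}=0$ on $(\sigma,\eta_\sigma^\varepsilon]$; this requires taking left limits of a strict inequality, which only gives a non-strict one, but a margin $\varepsilon>0$ remains and that suffices. Consequently $K_{\eta_\sigma^\varepsilon}=K_\sigma$; the jump $\Delta_{+}K$ at $\eta^\varepsilon_\sigma$ itself is not counted in $K_{\eta_\sigma^\varepsilon}-K_\sigma$ because of the convention $\sum_{s<t}$. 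This step is the one we expect to be the main obstacle: one must carefully separate $\Delta_{-}K$ (contained in $K^{\ast}$) from $\Delta_{+}K$ at the endpoints $\sigma$ and $\eta^\varepsilon_\sigma$.

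\emph{Step 3: property (ii).} With $K$ constant on $[\sigma,\eta_\sigma^\varepsilon]$, Remark \ref{Remark of the starting stp} gives, for $\sigma\le\sigma^{\ast}\le\eta_\sigma^\varepsilon$,
$$Y_{\sigma^{\ast}}=Y_{\eta_\sigma^\varepsilon}+\int_{\sigma^{\ast}}^{\eta_\sigma^\varepsilon}f(s,Y_s,Z_s,U_s)ds-\int_{\sigma^{\ast}}^{\eta_\sigma^\varepsilon}Z_sdB_s-\int_{\sigma^{\ast}}^{\eta_\sigma^\varepsilon}U_sdM_s .$$
Hence $(Y,Z,U)$ restricted to $[\sigma,\eta_\sigma^\varepsilon]$ solves the classical BSDE with default jump having terminal time $\eta_\sigma^\varepsilon$, terminal value $Y_{\eta_\sigma^\varepsilon}$ and driver $f\mathds{1}_{\rrbracket 0,\eta_\sigma^\varepsilon\rrbracket}$, with the required integrability inherited from $\mathfrak{D}^2_\beta$. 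By the uniqueness in Theorem \ref{Existence and uniqueness unconstrained BSDE}, $Y_{\sigma^\ast}=\mathcal{E}^f_{\sigma^\ast,\eta_\sigma^\varepsilon}(Y_{\eta_\sigma^\varepsilon})$. By Remark \ref{Remark of the ending stp}, $Y$ is therefore a strong $\mathcal{E}^f$-martingale on $[\sigma,\eta_\sigma^\varepsilon]$. Alternatively, one may combine $Y_\sigma=\mathcal{E}^f_{\sigma,\eta^\varepsilon_\sigma}(Y_{\eta^\varepsilon_\sigma})$ with the supermartingale property of Remark \ref{Strong supermartingale property} and the preceding equivalence lemma.
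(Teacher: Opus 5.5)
Your proposal is correct and follows essentially the same argument as the paper. In (i) you pass to the right limit along $t_n\downarrow\eta_\sigma^\varepsilon$, use the r.u.s.c.\ property of the barrier, and use that $\Delta_{+}K_{\eta_\sigma^\varepsilon}>0$ forces contact with the barrier, which the paper phrases as a contradiction; in (ii) you show $K$ is constant on $[\sigma,\eta_\sigma^\varepsilon]$ and identify $Y$ there with $\mathcal{E}^f_{\cdot,\eta_\sigma^\varepsilon}(Y_{\eta_\sigma^\varepsilon})$ through the BSDE with terminal time $\eta_\sigma^\varepsilon$, where your single treatment of $K^{\ast}$ via the $\varepsilon$-margin between the left limits of $Y$ and of the barrier on $(\sigma,\eta_\sigma^\varepsilon]$ is only a tidier version of the paper's separate handling of $K^{c}$ and $\Delta_{-}K^{\ast}$.
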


\begin{proof}
	With a few clear changes, the proof is comparable to that in \cite[Lemma 4.1]{Grigorova2017} for Poisson random measure filtering and Brownian motion. We provide it in our context for the convenience of the reader.
	\begin{itemize}
		\item[(i)] First, note that, from \cite[Theorem 3.11]{He}, we derive that $\eta_{\sigma}^{\varepsilon}$ is the d\'{e}but after $\sigma$ of a progressive set, we deduce that $\eta_{\sigma}^{\varepsilon}$ is an $\mathbb{F}$-stopping time in $[\sigma,T]$ as $Y_T=\L_T \leq \L_T+\varepsilon$, i.e.  $\eta_{\sigma}^{\varepsilon} \in \mathcal{T}_{[\sigma,T]}$.\\
		Next, we suppose that $\mathbb{P}\left(Y_{\eta_{\sigma}^{\varepsilon}} > \L_{\eta_{\sigma}^{\varepsilon}}+\varepsilon \right)>0$. From Remark \ref{Reflecting property}, we deduce that $\Delta_{+}K_{\eta_{\sigma}^{\varepsilon}}=0$ on the set $\left\{Y_{\eta_{\sigma}^{\varepsilon}} > \L_{\eta_{\sigma}^{\varepsilon}}+\varepsilon\right\}\subset\left\{Y_{\eta_{\sigma}^{\varepsilon}} > \L_{\eta_{\sigma}^{\varepsilon}}\right\}$. On the other hand, from BSDE \eqref{basic equation lower}-(i), we know that $\Delta_{+}Y_{\eta_{\sigma}^{\varepsilon}}=-\Delta_{+} K_{\eta_{\sigma}^{\varepsilon}}=K_{\eta_{\sigma}^{\varepsilon}}-K_{\eta_{\sigma}^{\varepsilon}+}$. Thus, $Y_{\eta_{\sigma}^{\varepsilon}+}=Y_{\eta_{\sigma}^{\varepsilon}}$ on the set $\left\{Y_{\eta_{\sigma}^{\varepsilon}} > \L_{\eta_{\sigma}^{\varepsilon}}+\varepsilon\right\}$. Following this we derive that $Y_{\eta_{\sigma}^{\varepsilon}+} > \L_{\eta_{\sigma}^{\varepsilon}}+\varepsilon$ on the set $\left\{Y_{\eta_{\sigma}^{\varepsilon}} > \L_{\eta_{\sigma}^{\varepsilon}}+\varepsilon\right\}$.
		Now, let's fix $\omega \in \Omega$, from the definition of the time $\eta_{\sigma}^{\varepsilon}(\omega)$, we  get the existence of a sequence $\{\mathfrak{t}_n(\omega)\}_{n \in \mathbb{N}}$ such that $Y_{\mathfrak{t}_n(\omega)} \leq \L_{\mathfrak{t}_n(\omega)}+\varepsilon$ $\forall n \in \mathbb{N}$ and $\mathfrak{t}_n(\omega) \downarrow \eta_{\sigma}^{\varepsilon}(\omega)$ as $n \to +\infty$. As the processes have finite right limits, we deduce by taking the limit on both sides when $n \to +\infty$ that $Y_{\eta_{\sigma}^{\varepsilon}(\omega)+} \leq \L_{\eta_{\sigma}^{\varepsilon}(\omega)+}+\varepsilon$ and from the assumption that $\L$ is r.u.s.c along stopping times and Remark \ref{Remark on left semi-continuity}, we get $Y_{\eta_{\sigma}^{\varepsilon}(\omega)+} \leq \L_{\eta_{\sigma}^{\varepsilon}(\omega)}+\varepsilon$ a.s., which is a contraction. Hence, we deduce that $Y_{\eta_{\sigma}^{\varepsilon}} \leq \L_{\eta_{\sigma}^{\varepsilon}}+\varepsilon$ a.s. and the statement (i) of Lemma \ref{Lemma needed for optimal stopping} follows.
		\item[(ii)] First, from Definition \ref{regulated processes}, recall that the reflection process $K$ can be decomposed into three parts $K=K^g+K^c+K^d=K^{\ast}+K^g$, with the property that $\int_{0}^{T}\left(Y_s-\L_s\right)dK^c_s=0$. Meaning that the continuous part increases only when $Y$ touch the barrier at a continuous point an try to prevent it (refer to \cite[Remark 4.1]{ElOtmani2009} for more details concerning the reflection of the right-continuous part). On the other hand, for almost every $\omega \in \Omega$, we have $Y_t >\L_t+\varepsilon$ for all $t \in [\sigma,\eta_{\sigma}^{\varepsilon}[$. Then, $dK^c_t(\omega)=0$ in  $ [\sigma,\eta_{\sigma}^{\varepsilon}[$, by continuity, we deduce that $dK^c_t(\omega)=0$ in  $ [\sigma,\eta_{\sigma}^{\varepsilon}]$, in other word, the function $t \mapsto K^c_t(\omega)$ is constant on the time interval $ [\sigma,\eta_{\sigma}^{\varepsilon}]$. Furthermore, note that $Y_{t-} \geq \L_{t-}+\varepsilon>\L_{t-}$ $\forall t \in [\sigma,\eta_{\sigma}^{\varepsilon}[$, then from Remark \ref{Reflecting property}, we deduce that $\Delta_{-}K^{\ast}_{t}=0$, $\forall t \in [\sigma,\eta_{\sigma}^{\varepsilon}[$. Now, from the definition of $\eta_{\sigma}^{\varepsilon}$ and the fact that $Y_{\eta_{\sigma}^{\varepsilon}-} \geq \L_{\eta_{\sigma}^{\varepsilon}-}+\varepsilon>\L_{\eta_{\sigma}^{\varepsilon}-}$, we also derive that $\Delta_{-}K^{\ast}_{t}=0$, $\forall t \in [\sigma,\eta_{\sigma}^{\varepsilon}]$. Finally, a similar study using Remark \ref{Reflecting property}, allows to conclude that $\Delta_{+}K=0$ on $[\sigma,\eta_{\sigma}^{\varepsilon}[$. Hence, the process $K^g$ is constant on $[\sigma,\eta_{\sigma}^{\varepsilon}[$. By the left-continuity of the purely-discontinuous process $K^g=\sum_{0 \leq s <t}\Delta_{+}K_s$, we derive that $t \mapsto K^g_t(\omega)$ is constant on $[\sigma,\eta_{\sigma}^{\varepsilon}]$. Resuming all this, we obtain that $t \mapsto K_t$ is constant on $[\sigma,\eta_{\sigma}^{\varepsilon}]$. Hence, $(Y,Z,U)$ is a solution of the BSDE \eqref{basic equation : classical BSDE} on $[\sigma,\eta_{\sigma}^{\varepsilon}]$ associated with terminal time $\eta_{\sigma}^{\varepsilon}$, terminal condition $Y_{\eta_{\sigma}^{\varepsilon}}$ and driver $f$. The result is obtained using Remark \ref{Remark of the ending stp} and we can the write $Y_{\sigma^{\ast}}=\mathcal{E}^f_{\sigma^{\ast},\eta_{\sigma}^{\varepsilon}}(Y_{\eta_{\sigma}^{\varepsilon}})$ for all $\sigma^{\ast} \in \mathcal{T}_{[0,T]}$ such that $\sigma \leq \sigma^{\ast} \leq \eta_{\sigma}^{\varepsilon}$ a.s.
	\end{itemize}
\end{proof}

We also have to state the following auxiliary lemma:
\begin{lemma}
	Let $\sigma \in \mathcal{T}_{[0,T]}$, $\eta \in \mathcal{T}_{[\sigma,T]}$ and $\L \in \mathcal{S}^2_{\beta}$, then 
	$$
	\left| \mathcal{E}^f_{\sigma,\eta_{\sigma}}\left(\L_{\eta_{\sigma}}+\varepsilon\right)-\mathcal{E}^f_{\sigma,\eta_{\sigma}}\left(L_{\eta_{\sigma}}\right) \right| \leq\mathfrak{c}_{\beta,A,T} \varepsilon,
	$$
	where $\mathfrak{c}_{\beta,T}$ is a positive constant that relies only on the terminal time $T$, the boundedness constant of the random variable $A$ and $\beta$. In particular, we have
	$$
	\mathcal{E}^f_{\sigma,\eta_{\sigma}}\left(\L_{\eta_{\sigma}}+\varepsilon\right)\leq \mathcal{E}^f_{\sigma,\eta_{\sigma}}\left(L_{\eta_{\sigma}}\right)+\mathfrak{c}_{\beta,A,T} \varepsilon.
	$$
	\label{Also needed}
\end{lemma}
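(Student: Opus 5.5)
We compare two solutions of the BSDE with default jump on the stochastic interval $[\sigma,\eta_\sigma]$ and reduce the difference to a linear BSDE. Let $(Y^1,Z^1,U^1)$ and $(Y^2,Z^2,U^2)$ be the solutions of the BSDE with terminal time $\eta_\sigma$, driver $f$ (that is, $f\mathds{1}_{\rrbracket 0,\eta_\sigma\rrbracket}$ as in Remark \ref{Remark of the ending stp}), and terminal values $\L_{\eta_\sigma}+\varepsilon$ and $\L_{\eta_\sigma}$ respectively. Both lie in $\mathbb{L}^2_\beta$ because $\L\in\mathcal{S}^2_\beta$. Set $\bar Y=Y^1-Y^2$, $\bar Z=Z^1-Z^2$, $\bar U=U^1-U^2$, so that $\bar Y_{\eta_\sigma}=\varepsilon$.

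\textbf{Linearization.} As in the proof of Theorem \ref{Comparison theorem}, we write
\[
f(s,Y^1_s,Z^1_s,U^1_s)-f(s,Y^2_s,Z^2_s,U^2_s)=\delta_s\bar Y_s+\varphi_s\bar Z_s+\psi_s .
\]
Here $|\delta_s|\le\mu_s$ and $|\varphi_s|\le\theta_s$. The $u$-increment $\psi_s$ is squeezed by \textbf{(H4)} and the subsequent Remark:
\[
\lambda^{Y^2_s,Z^2_s,U^2_s,U^1_s}_s\bar U_s\gamma_s\le\psi_s\le\lambda^{Y^2_s,Z^2_s,U^1_s,U^2_s}_s\bar U_s\gamma_s .
\]
Hence $\psi_s=\lambda_s\bar U_s\gamma_s$ for some predictable $\lambda_s$ lying between these two values. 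In particular $\lambda\ge-1$ and $|\lambda_s|^2\le\nu_s$.

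\textbf{Explicit representation.} Let $\Lambda^\star$ be the solution of
\[
d\Lambda^\star_t=\Lambda^\star_{t-}\left(\delta_t\,dt+\varphi_t\,dB_t+\lambda_t\,dM_t\right),\qquad \Lambda^\star_\sigma=1,
\]
started at $\sigma$. By \textbf{(H4)}, the quantities $\int_0^T|\delta_s|\,ds\le\int_0^T\mu_s\,ds$ and $\int_0^T(\varphi_s^2+\lambda_s^2\gamma_s)\,ds$ are bounded. Proposition \ref{Integr problm solved} and Remark \ref{remark of positivity} then give that $\Lambda^\star$ is non-negative with $\mathbb{E}\sup|\Lambda^\star|^2<\infty$. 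Integration by parts applied to $\Lambda^\star\bar Y$ on $[\sigma,\eta_\sigma]$ eliminates the drift. The stochastic integrals are true martingales, which follows from the square integrability of $\Lambda^\star$ and $\bar Y$ exactly as in Proposition \ref{Martingale Proposition}. We therefore obtain
\[
\bar Y_\sigma=\varepsilon\,\mathbb{E}\left[\Lambda^\star_{\eta_\sigma}\mid\mathcal{F}_\sigma\right].
\]

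\textbf{Bounding the expectation.} Write $\Lambda^\star=e^{\int_\sigma^\cdot\delta_s ds}\Lambda$, where $\Lambda$ is the stochastic exponential of $\varphi\,dB+\lambda\,dM$. By Proposition \ref{Integr problm solved}, $\Lambda$ is a non-negative martingale, so $\mathbb{E}[\Lambda_{\eta_\sigma}\mid\mathcal{F}_\sigma]=1$. Combining this with $|\delta|\le\mu\le\alpha^2$ gives
\[
0\le\bar Y_\sigma\le\varepsilon\,\operatorname*{ess\,sup}_\omega e^{\int_0^T\mu_s ds}\le\varepsilon\,e^{\|A_T\|_\infty}.
\]
This yields the constant $\mathfrak{c}_{\beta,A,T}$; it in fact depends only on the bound of $A$, and the dependence on $\beta$ and $T$ is harmless. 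The one-sided inequality stated last in the lemma follows immediately.

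\textbf{Main obstacle.} The delicate step is justifying the linearization in the $u$-variable together with the martingale property of the local martingale terms on the random interval. In particular one must check that the selected $\lambda$ is predictable; taking the convex combination with a predictable weight handles this. One must also check that $\lambda\ge-1$ keeps $\Lambda^\star$ non-negative, so that no sign issue arises.

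A fallback avoids the explicit formula. Apply the Itô formula of Corollary \ref{Application of Ito formula} to $e^{\beta A}|\bar Y|^2$ and follow the estimate in the proof of Theorem \ref{basic theorem}. This gives $|\bar Y_\sigma|^2\le \mathbb{E}[e^{\beta(A_{\eta_\sigma}-A_\sigma)}\varepsilon^2\mid\mathcal{F}_\sigma]$ for $\beta$ large, which again produces a bound of order $\varepsilon$ with a constant depending on $\beta$ and $\|A_T\|_\infty$.
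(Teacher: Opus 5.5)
Your main argument is correct, and it takes a different route from the paper. The paper never linearizes. It applies the conditional a priori estimate of Proposition~\ref{Preliminalry estimaes} at the stopping time $\sigma$, with horizon $\eta_\sigma$, to the two $\mathcal{E}^f$-expectations. Since the drivers coincide, only the terminal term survives, which gives $|\bar Y_\sigma|^2\le c_\beta\varepsilon^2\,\mathbb{E}[e^{\beta A_T}\mid\mathcal{F}_\sigma]$, and boundedness of $A_T$ finishes the proof. That is exactly your fallback.

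Your primary route instead reduces $\bar Y$ to a linear BSDE and derives the representation $\bar Y_\sigma=\varepsilon\,\mathbb{E}[\Lambda^\star_{\eta_\sigma}\mid\mathcal{F}_\sigma]$. This gives you more:
\begin{itemize}
\item the sign $\bar Y_\sigma\ge 0$ for free;
\item an explicit, $\beta$-free constant $e^{\|\int_0^T\mu_s\,ds\|_\infty}$ in place of $\sqrt{c_\beta}\,e^{\beta\|A_T\|_\infty/2}$.
\end{itemize}
The price is that you need the $\lambda$-structure of \textbf{(H4)}, a predictable linearization, and square integrability of the Dol\'eans-Dade exponential. The paper's route needs only \textbf{(H2)} and $\|A_T\|_\infty<\infty$.

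Three details in your version should be tightened.
\begin{enumerate}
\item \emph{Integrability of $\lambda$.} You state $|\lambda_s|^2\le\nu_s$ and then invoke \textbf{(H4)} to bound $\int_0^T\lambda_s^2\gamma_s\,ds$. That inequality only yields $\int_0^T\nu_s\gamma_s\,ds$, whereas \textbf{(H4)} controls $\int_0^T\nu_s^2\gamma_s\,ds$. Your own selection repairs this. Set $\lambda_s=\psi_s/(\bar U_s\gamma_s)$ on $\{\bar U_s\gamma_s\neq 0\}$ and $\lambda_s=0$ elsewhere. The two-sided squeeze gives $\lambda_s\ge -1$. Condition \textbf{(H2)}(a) gives $|\psi_s|\le \nu_s\gamma_s|\bar U_s|$, hence $|\lambda_s|\le\nu_s$. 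Therefore $\int_0^T(\varphi_s^2+\lambda_s^2\gamma_s)\,ds\le\|A_T\|_\infty$, which is exactly the hypothesis of Proposition~\ref{Integr problm solved}.
\item \emph{Predictability.} The ratio above is only progressive, since $f$ is merely progressive and $Y^2$ is optional, so a predictable weight is not automatic. Linearize at $Y^2_{s-}$, as the paper does; this is harmless inside $ds$-integrals. Then replace $\lambda$ by a predictable version equal to it $dt\otimes d\mathbb{P}$-a.e. Because $\gamma_t\,dt$ is the compensator of $H$, the bound $\lambda_\tau\ge -1$ then holds a.s.\ at the default time.
\item \emph{Martingale property.} Proposition~\ref{Martingale Proposition} does not cover the $dM$-integrand $\Lambda^\star_{s-}(\bar U_s+\lambda_s\bar Y_{s-}+\lambda_s\bar U_s)$, because $\lambda$ is unbounded. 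It is simpler to argue by domination. On $[\sigma,\eta_\sigma]$ the local martingale $\Lambda^\star\bar Y-\bar Y_\sigma$ is dominated by $2\sup_t|\Lambda^\star_t|\,\sup_t|\bar Y_t|$. This is in $L^1$ by Cauchy--Schwarz, so the local martingale is a true martingale.
\end{enumerate}
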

\begin{proof}
	From Proposition \ref{Preliminalry estimaes}, we have
	\begin{equation*}
		\begin{split}
			\left| \mathcal{E}^f_{\sigma,\eta_{\sigma}^{\varepsilon}}\left(\L_{\eta_{\sigma}^{\varepsilon}}+\varepsilon\right)-\mathcal{E}^f_{\sigma,\eta_{\sigma}^{\varepsilon}}\left(L_{\eta_{\sigma}^{\varepsilon}}\right) \right|^2 &\leq c_{\beta} \mathbb{E} \left[e^{\beta A_{T}}\left|\left(\L_{\eta_{\sigma}^{\varepsilon}}+\varepsilon\right)-\L_{\eta_{\sigma}^{\varepsilon}} \right|^2 \mid \mathcal{F}_{\sigma}\right]\\
			&= c_{\beta}\varepsilon^2 \mathbb{E}\left[ e^{\beta A_T} \mid \mathcal{F}_{\sigma} \right]  .
		\end{split}
	\end{equation*}
	Then the result follows from the boundedness of the random variable $A_T$ asserted in the assumption \textbf{(H4)}.
\end{proof}

Let's now outline the main result of this section.
\begin{theorem}
	Let $\left(\L_t\right)_{t \leq T}$ be a regulated process which is r.u.s.c along stooping satisfying condition \textbf{(H3')} with $\L \in \mathcal{S}^2_{2\beta}$. Let $\left(Y_t,Z_t,K_t,U_t\right)_{t \leq T}$ be the unique solution of the RBSDE \eqref{basic equation lower} associated with $\left(\L_T,f,\L\right)$ which exists due to Theorem \ref{Lower existence Thm} in the sense of Definition \ref{Definition of the solution}.  Then,
	\begin{itemize}
		\item[(i)] For each $\sigma \in \mathcal{T}_{[0,T]}$, the state process is a solution of the following optimal stopping problem with $\mathcal{E}^f$-expectation:
		\begin{equation}
			Y_{\sigma}=\esssup_{\eta\in \mathcal{T}_{[\sigma,T]}} \mathcal{E}^f_{\sigma,\eta}\left(\L_{\eta}\right)~\text{ a.s.}
			\label{Stopping problem}
		\end{equation}
		\item[(ii)] \textit{$(\beta,A,\varepsilon)$-optimality}: For every $\sigma \in \mathcal{T}_{[0,T]}$ and each $\varepsilon>0$, the stopping time \eqref{Optila varep Stp} is $(\beta,A,\varepsilon)$-optimal for the stopping problem with $\mathcal{E}^f$-expectation \eqref{Stopping problem}, in the sense that
		\begin{equation}
			Y_{\sigma} \leq \mathcal{E}^f_{\sigma,\eta_{\sigma}^{\varepsilon}}\left(\L_{\eta_{\sigma}^{\varepsilon}}\right)+\mathfrak{c}_{\beta,A,T}\varepsilon~\text{ a.s.}
			\label{Optionality relation}
		\end{equation}
		where the positive constant $\mathfrak{c}_{\beta,A,T}$ is given in the proof of Lemma \ref{Also needed}.
	\end{itemize}
	\label{Main result of sec 5}
\end{theorem}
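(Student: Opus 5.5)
We prove the two inequalities behind \eqref{Stopping problem} separately, following the scheme of \cite[Theorem 4.2]{Grigorova2017}; the $\varepsilon$-optimality \eqref{Optionality relation} comes out of the lower bound.

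\textbf{Upper bound.} By Remark \ref{Strong supermartingale property}, $Y$ is a strong $\mathcal{E}^f$-supermartingale. Fix $\eta \in \mathcal{T}_{[\sigma,T]}$. Then
$$Y_\sigma \geq \mathcal{E}^f_{\sigma,\eta}(Y_\eta).$$
The constraint \eqref{basic equation lower}-(ii) gives $Y_\eta \geq \L_\eta$ a.s., with $Y_T=\L_T$. The comparison principle for classical BSDEs with default jump (Theorem \ref{Comparison for BSDEs}-(i), available under \textbf{(H4)}) is applied on the stochastic interval $[\sigma,\eta]$, with the driver $f\mathds{1}_{\rrbracket 0,\eta\rrbracket}$ as in Remark \ref{Remark of the ending stp}. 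It yields $\mathcal{E}^f_{\sigma,\eta}(Y_\eta) \geq \mathcal{E}^f_{\sigma,\eta}(\L_\eta)$. Taking the essential supremum over $\eta$ gives $Y_\sigma \geq \esssup_{\eta\in\mathcal{T}_{[\sigma,T]}}\mathcal{E}^f_{\sigma,\eta}(\L_\eta)$.

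\textbf{Lower bound and $\varepsilon$-optimality.} Fix $\varepsilon>0$ and let $\eta^\varepsilon_\sigma$ be the stopping time defined in \eqref{Optila varep Stp}.
\begin{enumerate}
\item Lemma \ref{Lemma needed for optimal stopping}-(ii) says $Y$ is a strong $\mathcal{E}^f$-martingale on $[\sigma,\eta^\varepsilon_\sigma]$. Hence $Y_\sigma=\mathcal{E}^f_{\sigma,\eta^\varepsilon_\sigma}(Y_{\eta^\varepsilon_\sigma})$.
\item Lemma \ref{Lemma needed for optimal stopping}-(i) gives $Y_{\eta^\varepsilon_\sigma}\le \L_{\eta^\varepsilon_\sigma}+\varepsilon$. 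This random variable lies in $\mathbb{L}^2_\beta$, because $\L\in\mathcal{S}^2_{2\beta}$ and $A_T$ is bounded by \textbf{(H4)}. Monotonicity of $\mathcal{E}^f$ (comparison again) then gives
$$Y_\sigma\le \mathcal{E}^f_{\sigma,\eta^\varepsilon_\sigma}(\L_{\eta^\varepsilon_\sigma}+\varepsilon).$$
\item Lemma \ref{Also needed}, applied with $\eta_\sigma=\eta^\varepsilon_\sigma$, bounds the right-hand side by $\mathcal{E}^f_{\sigma,\eta^\varepsilon_\sigma}(\L_{\eta^\varepsilon_\sigma})+\mathfrak{c}_{\beta,A,T}\varepsilon$. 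This is exactly \eqref{Optionality relation}.
\item Since $\eta^\varepsilon_\sigma\in\mathcal{T}_{[\sigma,T]}$, the bound in step 3 implies $Y_\sigma\le \esssup_{\eta}\mathcal{E}^f_{\sigma,\eta}(\L_\eta)+\mathfrak{c}_{\beta,A,T}\varepsilon$. The constant $\mathfrak{c}_{\beta,A,T}$ does not depend on $\varepsilon$, so letting $\varepsilon\downarrow 0$ along a sequence gives the reverse inequality and hence (i).
\end{enumerate}

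\textbf{Main obstacle.} The delicate point is the legitimacy of the comparison steps with random terminal times. Each use requires two things:
\begin{itemize}
\item the terminal variables ($Y_\eta$, $\L_\eta$, $\L_{\eta^\varepsilon_\sigma}+\varepsilon$) must belong to $\mathbb{L}^2_\beta(\mathcal{F}_\eta)$, so that the $\mathcal{E}^f$-expectations are well defined via Theorem \ref{Existence and uniqueness unconstrained BSDE};
\item \textbf{(H4)} must hold, so that the change of measure of Proposition \ref{Integr problm solved} is available for the comparison.
\end{itemize}
For the first point we use $\esssup_\eta e^{2\beta A_\eta}|\L_\eta|^2\in L^1$ together with $Y\in\mathcal{S}^2_\beta$. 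We also rely on the fact that Remark \ref{Strong supermartingale property} genuinely provides the strong $\mathcal{E}^f$-supermartingale property along arbitrary pairs $\sigma\le\eta$. This is where Remark \ref{Remark of the starting stp}, which gives the equation at stopping times, and the nonnegativity of $dK$ enter.
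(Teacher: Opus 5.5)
Your proposal is correct and follows the paper's proof essentially step for step. The inequality ``$\geq$'' in \eqref{Stopping problem} comes from the strong $\mathcal{E}^f$-supermartingale property of $Y$ (Remark \ref{Strong supermartingale property}) plus comparison, and the reverse inequality together with \eqref{Optionality relation} comes from Lemma \ref{Lemma needed for optimal stopping}, comparison, Lemma \ref{Also needed}, and letting $\varepsilon \downarrow 0$, which you make explicit along with the $\mathbb{L}^2_\beta$ checks on the terminal values (via boundedness of $A_T$ under \textbf{(H4)}) that the paper leaves implicit.
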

\begin{proof}
	Let $\varepsilon>0$ and $\eta \in \mathcal{T}_{[\sigma,T]}$. From Remark \ref{Strong supermartingale property}, we deduce that $Y$ is a strong $\mathcal{E}^f$-supermartingale. Hence, we have $Y_{\sigma} \geq \mathcal{E}^f_{\sigma,\eta}(Y_{\eta})$. Moreover, from \eqref{basic equation lower}-(ii), we have  $Y_{\eta} \geq \L_{\eta}$ and from Theorem \ref{Comparison for BSDEs}, we have $\mathcal{E}^f_{\sigma,\eta}(Y_{\eta}) \geq \mathcal{E}^f_{\sigma,\eta}(\L_{\eta})$ , then $Y_{\sigma} \geq \mathcal{E}^f_{\sigma,\eta}(\L_{\eta})$. From the definition of the essential supremum, we deduce that $Y_{\sigma} \geq \esssup_{\eta\in \mathcal{T}_{[\sigma,T]}} \mathcal{E}^f_{\sigma,\eta}\left(\L_{\eta}\right)$ a.s. Showing the opposite disparity is still necessary. This will be done with the help of Remark \ref{Strong supermartingale property} together with the result of Lemma \ref{Lemma needed for optimal stopping}-(ii), which implies that $Y_{\sigma}=\mathcal{E}^f_{\sigma,\eta_{\sigma}^{\varepsilon}}\left(Y_{\eta_{\sigma}^{\varepsilon}}\right)$ a.s. Now, using the result of Lemma \ref{Lemma needed for optimal stopping}-(i) and Theorem \ref{Comparison for BSDEs}, we derive $Y_{\sigma}=\mathcal{E}^f_{\sigma,\eta_{\sigma}^{\varepsilon}}\left(Y_{\eta_{\sigma}^{\varepsilon}}\right) \leq \mathcal{E}^f_{\sigma,\eta_{\sigma}^{\varepsilon}}\left(\L_{\eta_{\sigma}^{\varepsilon}}+\varepsilon\right) $ a.s. Then, a direct application of Lemma \ref{Also needed} allows us to write $Y_{\sigma}=\mathcal{E}^f_{\sigma,\eta_{\sigma}^{\varepsilon}}\left(Y_{\eta_{\sigma}^{\varepsilon}}\right) \leq \mathcal{E}^f_{\sigma,\eta_{\sigma}^{\varepsilon}}\left(\L_{\eta_{\sigma}^{\varepsilon}}\right)+\mathfrak{c}_{\beta,A,T}\varepsilon$, which proofs the statement (ii) of Theorem \ref{Main result of sec 5}, i.e. the $(\beta,A,\varepsilon)$-optimality provided by relation \eqref{Optionality relation}. Now, from inequality  $Y_{\sigma} \leq \mathcal{E}^f_{\sigma,\eta_{\sigma}^{\varepsilon}}\left(\L_{\eta_{\sigma}^{\varepsilon}}\right)+\mathfrak{c}_{\beta,A,T}\varepsilon$, we deduce that $Y_{\sigma} \leq \esssup_{\eta\in \mathcal{T}_{[\sigma,T]}}\mathcal{E}^f_{\sigma,\eta}\left(\L_{\eta}\right)$. Then the proof of Theorem \ref{Main result of sec 5} is complete.
\end{proof}

%
%
\appendix
\section*{Appendix}
\label{Appendix}
\section*{BSDEs with default jump and stochastic Lipschitz coefficient}
\label{BSDEs}
\subsection*{Existence, uniqueness and preliminary estimates}
In this section, we focus on investigating the existence and uniqueness of solutions for a specific form of BSDEs with default jump associated with $\left(\xi,f\right)$. Furthermore, we present some comparison theorems for this type of BSDEs.

We consider the following BSDE:
\begin{equation}
	Y_t= \xi+\int_t^T f(s,Y_s,Z_s,U_s)ds-\int_t^T Z_s d B_s -\int_t^T U_s dM_s, \quad 0 \leq t \leq T.
	\label{basic equation : classical BSDE}
\end{equation}

\begin{theorem}[Existence and Uniqueness]
	\emph{}
	Assume that \textbf{(H1)} and \textbf{(H2)} hold for a sufficiently large $\beta>0$. Then, the BSDE \eqref{basic equation : classical BSDE} admit a unique solution $(Y,Z,U)\in \mathfrak{B}^2_{\beta} \times \mathcal{H}^2_{\beta}  \times \mathcal{M}^2_{\gamma,\beta}$.
	\label{Existence and uniqueness unconstrained BSDE}
\end{theorem}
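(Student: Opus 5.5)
The plan is a Picard contraction in a weighted space, built on two ingredients: the martingale representation property (Theorem \ref{Representation property}) and an Itô-formula estimate with the weight $e^{\beta A_t}$. Everything here is RCLL, since there is no reflection. Let $\mathcal{D}:=\mathcal{S}^{2,\alpha}_{\beta}\times\mathcal{H}^2_{\beta}\times\mathcal{M}^2_{\gamma,\beta}$ carry the norm $\|\cdot\|_\beta$ used in the proof of Theorem \ref{basic theorem}. Given $(y,z,u)\in\mathcal{D}$, I first show that
\[
\mathbb{E}\Big(\int_0^T |f(s,y_s,z_s,u_s)|ds\Big)^2<\infty ,
\]
and more precisely that $\mathbb{E}\,e^{\beta A_T}\big(\int_0^T|f|ds\big)^2$ is finite. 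For this I write $|f(s,\cdot)|\le \alpha_s\,|f(s,\cdot)/\alpha_s|$ and apply Cauchy--Schwarz:
\[
\Big(\int_t^T|f|ds\Big)^2\le \int_t^T e^{-\beta A_s}dA_s\int_t^T e^{\beta A_s}\frac{|f|^2}{\alpha_s^2}ds\le \frac{e^{-\beta A_t}}{\beta}\int_t^T e^{\beta A_s}\frac{|f|^2}{\alpha_s^2}ds .
\]
The right-hand side is finite by the same bound on $|f/\alpha|^2$ already displayed in the proof of Theorem \ref{basic theorem}. I then consider the square integrable martingale $N_t:=\mathbb{E}[\xi+\int_0^T f(s,y_s,z_s,u_s)ds\mid\mathcal{F}_t]$, whose square integrability uses the above estimate together with $\mathbb{E}|\xi|^2\le\mathbb{E}e^{\beta A_T}|\xi|^2$. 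Theorem \ref{Representation property} gives predictable $(Z,U)$ with $N=N_0+\int Z\,dB+\int U\,dM$. Setting $Y_t:=N_t-\int_0^t f\,ds$ solves the BSDE with frozen driver, and this defines $\Phi(y,z,u):=(Y,Z,U)$.

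The next step is to check that $\Phi$ maps $\mathcal{D}$ into $\mathcal{D}$, and into $\mathfrak{B}^2_\beta\times\mathcal{H}^2_\beta\times\mathcal{M}^2_{\gamma,\beta}$. I apply Itô's formula to $e^{\beta A_t}|Y_t|^2$; the jumps come only from $M$, and $d[M]=dH$ has compensator $\gamma\,dt$. This produces the identity \eqref{basic Itos formula} with $K\equiv0$. I bound $2Y_s f_s$ by $(\beta-1)e^{\beta A_s}|Y_s|^2\alpha_s^2+\frac{1}{\beta-1}e^{\beta A_s}|f_s/\alpha_s|^2$, as in \eqref{estimation of Yn and g}. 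For the local martingale terms I first localize and then remove the localization by the BDG argument of Proposition \ref{Martingale Proposition}. This yields the bounds on $\|Y\|_{\mathcal{S}^{2,\alpha}_\beta}$, $\|Z\|_{\mathcal{H}^2_\beta}$ and $\|U\|_{\mathcal{M}^2_{\gamma,\beta}}$; BDG applied once more gives the $\mathcal{S}^2_\beta$ bound.

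For the contraction, I take two inputs and write $\bar Y=Y-Y'$, and similarly for the other components. The same Itô computation, with terminal value $0$, combined with the stochastic Lipschitz bound as in \eqref{General case}, gives
\[
\|(\bar Y,\bar Z,\bar U)\|_\beta^2\le \frac{c}{\beta-1}\,\|(\bar y,\bar z,\bar u)\|_\beta^2 .
\]
Here I use $\mu_s\le\alpha_s^2$, $\theta_s\le\alpha_s$ and $\nu_s\gamma_s|\bar u_s|\le \alpha_s\sqrt{\gamma_s}|\bar u_s|$, so that $2|\bar Y_s|(\mu_s|\bar y_s|+\theta_s|\bar z_s|+\nu_s\gamma_s|\bar u_s|)\le (\beta-1)\alpha_s^2|\bar Y_s|^2+\frac{3}{\beta-1}(\alpha_s^2|\bar y_s|^2+|\bar z_s|^2+\gamma_s|\bar u_s|^2)$. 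For $\beta$ large the constant $\frac{c}{\beta-1}$ is below $1$, so the Banach fixed point theorem gives existence and uniqueness in $\mathcal{D}$. The a priori estimate then upgrades the fixed point to $\mathfrak{B}^2_\beta\times\mathcal{H}^2_\beta\times\mathcal{M}^2_{\gamma,\beta}$, and uniqueness in the larger class follows because any solution there lies in $\mathcal{D}$.

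The main obstacle I expect is the handling of the jump term. One must check that $\int e^{\beta A_s}Y_{s-}U_s\,dM_s$ is a true martingale, and also that $\mathbb{E}\int e^{\beta A}|U|^2 dH=\mathbb{E}\int e^{\beta A}|U|^2\gamma\,ds$ holds even though $A$ is only adapted and not bounded. I would handle this by stopping at $\tau_k:=\inf\{t: A_t\ge k\}$ and passing to the limit by monotone convergence. A second point is that the weighted norm of $U$ involves $\nu_s^2\gamma_s$ inside $\alpha_s^2$; the splitting must be arranged so that the $\gamma$-weighted $|\bar u|^2$ term appears exactly, which is why I estimate $\nu_s\gamma_s|\bar u_s|$ as above.
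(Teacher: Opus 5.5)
Your route is the one the paper sketches in its appendix. You solve the frozen-driver equation with the representation Theorem \ref{Representation property}, then run a Picard contraction in $\mathcal{S}^{2,\alpha}_\beta\times\mathcal{H}^2_\beta\times\mathcal{M}^2_{\gamma,\beta}$. Your splitting ($\mu_s\le\alpha_s^2$, $\theta_s\le\alpha_s$, $\nu_s\gamma_s\le\alpha_s\sqrt{\gamma_s}$) gives the contraction constant $3/(\beta-1)$, so $\beta>4$ works, and your uniqueness argument in the larger class is fine.

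One step, however, is circular as written. In the frozen-driver estimate you stop at $\tau_k$, then ``remove the localization by the BDG argument of Proposition \ref{Martingale Proposition}'', and obtain the $\mathcal{S}^2_\beta$ bound only at the end. That argument presupposes $Y\in\mathcal{S}^2_\beta$. Moreover, the stopped It\^o identity leaves $\mathbb{E}\,e^{\beta A_{\tau_k}}|Y_{\tau_k}|^2$ on the right-hand side, and this must be bounded uniformly in $k$ before you let $k\to\infty$. Knowing only $\mathbb{E}\sup_t|Y_t|^2<\infty$ does not give this when $A$ is unbounded. By contrast, the identity $\mathbb{E}\int e^{\beta A}|U|^2dH=\mathbb{E}\int e^{\beta A}|U|^2\gamma\,ds$, which you flagged as the main obstacle, needs no localization, since the integrand is non-negative and predictable.

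The missing bound comes from your own Cauchy--Schwarz estimate, read correctly. It gives $e^{\beta A_t}(\int_t^T|f|ds)^2\le\frac1\beta\int_t^Te^{\beta A_s}|f_s/\alpha_s|^2ds$ for every $t$. It does \emph{not} give $\mathbb{E}\,e^{\beta A_T}(\int_0^T|f|ds)^2<\infty$; that claim fails, e.g., for deterministic $f$ supported in $[0,T/2]$ with $\alpha\equiv1$ there and $\mathbb{E}\,e^{\beta A_T}=\infty$. Since $Y_t=\mathbb{E}[\xi+\int_t^Tf\,ds\mid\mathcal{F}_t]$ and $A$ is non-decreasing,
\[
e^{\beta A_t/2}|Y_t|\le\mathbb{E}\Big[e^{\beta A_T/2}|\xi|+\beta^{-1/2}\Big(\int_0^Te^{\beta A_s}\Big|\frac{f_s}{\alpha_s}\Big|^2ds\Big)^{1/2}\,\Big|\,\mathcal{F}_t\Big].
\]
Doob's $L^2$ inequality then yields directly
\[
\mathbb{E}\sup_te^{\beta A_t}|Y_t|^2\le 8\Big(\mathbb{E}\,e^{\beta A_T}|\xi|^2+\beta^{-1}\,\mathbb{E}\int_0^Te^{\beta A_s}|f_s/\alpha_s|^2ds\Big).
\]
Work with square roots here: squaring first leaves an $L^1$ martingale, for which Doob's maximal inequality is unavailable. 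With this bound the terminal term is dominated, and the localized It\^o estimate passes to the limit. This gives the $\mathcal{S}^{2,\alpha}_\beta$, $\mathcal{H}^2_\beta$ and $\mathcal{M}^2_{\gamma,\beta}$ bounds, and the rest of your argument goes through.
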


\begin{proposition}
	Let $(Y^j,Z^j,U^j)$ be the unique solution of the RBSDE \eqref{basic equation lower} associated
	with data $(\xi^j,f^j)$ for $j \in \{1,2\}$ satisfying assumptions \textbf{(H1)}, \textbf{(H2)}. Then, for any $\beta >2$, there exists a constant $\mathfrak{c}_{\beta}>0$ such that, $s \in [0,T]$,
	\begin{equation*}
		\begin{split}
			&\mathbb{E}\left[\sup_{s \leq t \leq T}e^{\beta A_t} \left| Y^1_t-Y^2_t \right|^2 \mid \mathcal{F}_s  \right]\\
			& \leq \mathfrak{c}_{\beta}\left(\mathbb{E}\left[e^{\beta A_T}\left| \xi^1-\xi^2\right|^2 \mathcal{F}_s \right]\right.\\
			&\qquad \qquad \left.+\mathbb{E}\left[ \left.\int_{s}^{T}  e^{\beta A_r}\left|\dfrac{f^1\left(r,Y^2_r,Z^2_r,U^2_r\right)-f^2\left(r,Y^2_r,Z^2_r,U^2_r\right) }{\alpha_r}\right|^2 dr \right| \mathcal{F}_s\right]  \right).
		\end{split}
	\end{equation*}
	\label{Preliminalry estimaes}
\end{proposition}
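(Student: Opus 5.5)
We prove the conditional a priori estimate of Proposition \ref{Preliminalry estimaes} by the standard weighted It\^o argument applied to the difference of the two solutions, conditioned on $\mathcal{F}_s$. Set $\bar Y=Y^1-Y^2$, $\bar Z=Z^1-Z^2$, $\bar U=U^1-U^2$, $\bar\xi=\xi^1-\xi^2$, and write the driver difference as
\[
f^1(r,Y^1_r,Z^1_r,U^1_r)-f^2(r,Y^2_r,Z^2_r,U^2_r)=\Delta_r+\delta f_r,
\]
with $\Delta_r:=f^1(r,Y^1_r,Z^1_r,U^1_r)-f^1(r,Y^2_r,Z^2_r,U^2_r)$ and $\delta f_r:=f^1(r,Y^2_r,Z^2_r,U^2_r)-f^2(r,Y^2_r,Z^2_r,U^2_r)$. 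By \textbf{(H2)}, $|\Delta_r|\le \mu_r|\bar Y_r|+\theta_r|\bar Z_r|+\nu_r\gamma_r|\bar U_r|$. Since there is no reflection, $\bar Y$ is an RCLL semimartingale. We apply It\^o's formula (Corollary \ref{Application of Ito formula}) to $e^{\beta A_t}|\bar Y_t|^2$ between $t\in[s,T]$ and $T$. As in \eqref{left jump}--\eqref{integral M}, the jump term is $\int_t^T e^{\beta A_r}|\bar U_r|^2dH_r$, whose compensator is $\int e^{\beta A_r}|\bar U_r|^2\gamma_r dr$.

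The first step bounds the drift term exactly as in \eqref{General case}:
\[
2|\bar Y_r||\Delta_r|\le \tfrac{\beta}{2}\alpha_r^2|\bar Y_r|^2+\tfrac{c}{\beta}\left(|\bar Z_r|^2+|\bar U_r|^2\gamma_r\right).
\]
This uses $\mu_r\le\alpha_r^2$, $\theta_r^2\le\alpha_r^2$ and $\nu_r^2\gamma_r^2\le \alpha_r^2\gamma_r$. Similarly,
\[
2|\bar Y_r||\delta f_r|\le \tfrac{\beta}{4}\alpha_r^2|\bar Y_r|^2+\tfrac{4}{\beta}\left|\delta f_r/\alpha_r\right|^2.
\]
For $\beta$ large (e.g.\ $\beta>2$ after adjusting constants), the $|\bar Y|^2dA$ terms are absorbed by $\beta\int e^{\beta A}|\bar Y|^2dA$, and the $\bar Z,\bar U$ terms are absorbed by the left-hand side. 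Next we take conditional expectations given $\mathcal{F}_s$. The stochastic integrals $\int e^{\beta A}\bar Y\bar Z\,dB$ and $\int e^{\beta A}\bar Y_{-}\bar U\,dM$ are true martingales by the argument of Proposition \ref{Martingale Proposition}. This gives
\[
\mathbb{E}\Big[\int_s^T e^{\beta A_r}\big(\alpha_r^2|\bar Y_r|^2+|\bar Z_r|^2+|\bar U_r|^2\gamma_r\big)dr\,\Big|\,\mathcal{F}_s\Big]\le \mathfrak{c}_\beta\,R_s,
\]
where $R_s$ denotes the right-hand side of the proposition.

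The second step passes to the supremum. We return to the pathwise identity, take $\sup_{t\in[s,T]}$, and then take conditional expectation. The drift terms are controlled by the bound of the first step. For the martingale terms we use the conditional Burkholder--Davis--Gundy inequality, which is valid because the BDG inequality holds for the martingale $\int_s^{\cdot}$ started at $s$, conditionally on $\mathcal{F}_s$. This yields
\[
\mathbb{E}\Big[\sup_{s\le t\le T}\Big|\int_t^T e^{\beta A_r}\bar Y_r\bar Z_r dB_r\Big|\,\Big|\,\mathcal{F}_s\Big]\le C\,\mathbb{E}\Big[\Big(\sup_{t}e^{\beta A_t}|\bar Y_t|^2\int_s^T e^{\beta A_r}|\bar Z_r|^2dr\Big)^{1/2}\Big|\,\mathcal{F}_s\Big].
\]
Young's inequality then splits this as $\tfrac14\mathbb{E}[\sup e^{\beta A}|\bar Y|^2|\mathcal{F}_s]+C'\mathbb{E}[\int e^{\beta A}|\bar Z|^2|\mathcal{F}_s]$, and the $dM$ term is handled identically using $d[M]=dH$ and its compensator $\gamma\,dr$.

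The main obstacle is this absorption, since it requires $\mathbb{E}[\sup_t e^{\beta A_t}|\bar Y_t|^2|\mathcal{F}_s]<\infty$ a.s. We get this from $Y^j\in\mathcal{S}^2_\beta$: the unconditional expectation is finite, so the conditional one is finite a.s. We then absorb the $\tfrac14$-terms into the left-hand side and combine with the first step to conclude. If conditional BDG is felt to be delicate, we would instead multiply by $\mathbf 1_G$ for arbitrary $G\in\mathcal{F}_s$ and prove the unconditional version on $[s,T]$; the arbitrariness of $G$ then gives the conditional inequality.
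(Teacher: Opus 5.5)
Your proposal is correct and follows essentially the argument the paper has in mind; the paper only points to the preliminary estimates of the authors' earlier work. That argument is the weighted It\^o formula for $e^{\beta A_t}|\bar Y_t|^2$, the stochastic Lipschitz bound with Young's inequality, conditional expectation to remove the true-martingale terms, and conditional BDG plus absorption for the supremum. The one imprecision is the threshold: your displayed splitting only works for $\beta$ large. Instead write $2\theta_r|\bar Y_r||\bar Z_r|\le 2\theta_r^2|\bar Y_r|^2+\tfrac12|\bar Z_r|^2$ and $2\nu_r\gamma_r|\bar Y_r||\bar U_r|\le 2\nu_r^2\gamma_r|\bar Y_r|^2+\tfrac12\gamma_r|\bar U_r|^2$. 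Then the Lipschitz part costs exactly $2\alpha_r^2|\bar Y_r|^2$, which leaves the margin $\beta-2>0$ to absorb the $\delta f_r$ term, and this is why every $\beta>2$ works.
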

\subsection*{Comparison principals}
Let $(Y^j,Z^j,U^j)$ be the unique solution of the RBSDE \eqref{basic equation lower} associated
with data $(\xi^j,f^j)$ for $j \in \{1,2\}$ satisfying assumptions \textbf{(H1)}, \textbf{(H2)} and such that the condition \textbf{(H4)}. Then we have the following result:
\begin{theorem}[Comparison theorems]
	Assume that:
	\begin{itemize}
		\item $\xi^1 \leq \xi^2$.
		\item $
		f^1\left(t, Y_t^2, Z_t^2,U^2_t\right) \leq f^2\left(t, Y_t^2, Z_t^2,U^2_t\right),~t \in [0,T],~d\mathbb{P}\otimes dt\text{-a.e.}
		$
	\end{itemize}
	Then, we have
	\begin{itemize}
		\item[(i)] Comparison theorem: $Y^1_t \leq Y^2_t$, $\forall t \in [0,T]$ a.s.
		\item[(ii)] Strict comparison theorem: Suppose moreover, that $\psi>-1$ a.s., and $Y^1_{t_0}=Y^2_{t_0}$ a.s. for some $t_0 \in [0,T]$. Then $\xi_1=\xi_2$ a.s., and $Y^1=Y^2$ on $[t_0,T]$. 
	\end{itemize}
	\label{Comparison for BSDEs}
\end{theorem}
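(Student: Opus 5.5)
We argue by linearizing the equation satisfied by the difference, exactly as in the proof of Theorem \ref{Comparison theorem} but without reflection terms, and then representing the difference explicitly through a positive adjoint process built from Proposition \ref{Integr problm solved}. We set $\bar Y=Y^1-Y^2$, $\bar Z=Z^1-Z^2$, $\bar U=U^1-U^2$, $\bar\xi=\xi^1-\xi^2\le 0$. We split
\[
f^1(s,Y^1_s,Z^1_s,U^1_s)-f^2(s,Y^2_s,Z^2_s,U^2_s)=\delta_s\bar Y_s+\varphi_s\bar Z_s+\psi_s+\phi_s ,
\]
where $\delta$ and $\varphi$ are the usual difference quotients, so that $|\delta|\le\mu$ and $|\varphi|\le\theta$. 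Here $\psi_s=f^1(s,Y^2_s,Z^2_s,U^1_s)-f^1(s,Y^2_s,Z^2_s,U^2_s)$ and $\phi_s=f^1(s,Y^2_s,Z^2_s,U^2_s)-f^2(s,Y^2_s,Z^2_s,U^2_s)\le 0$. By \textbf{(H4)} we have $\psi_s\le \lambda_s\bar U_s\gamma_s$ with $\lambda_s:=\lambda^{Y^2_s,Z^2_s,U^1_s,U^2_s}_s\ge -1$ predictable and $|\lambda_s|^2\le\nu_s$. Hence
\[
\bar Y_t=\bar\xi+\int_t^T\big(\delta_s\bar Y_s+\varphi_s\bar Z_s+\lambda_s\gamma_s\bar U_s+h_s\big)ds-\int_t^T\bar Z_sdB_s-\int_t^T\bar U_sdM_s ,
\]
with $h_s:=\psi_s-\lambda_s\gamma_s\bar U_s+\phi_s\le 0$.

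Next we introduce $\Gamma_{t,\cdot}$, the solution of $d\Gamma_{t,s}=\Gamma_{t,s-}(\delta_sds+\varphi_sdB_s+\lambda_sdM_s)$, $\Gamma_{t,t}=1$. Since $\int_0^T(\mu_s+\theta_s^2+\nu_s\gamma_s)ds$ is bounded by \textbf{(H4)}, the random variables $\int_0^T|\delta_s|ds$ and $\int_0^T(\varphi_s^2+\lambda_s^2\gamma_s)ds$ are bounded. Proposition \ref{Integr problm solved} and Remark \ref{remark of positivity} then give $\Gamma_{t,\cdot}\ge 0$ and $\mathbb{E}\sup_s|\Gamma_{t,s}|^2<\infty$. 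Itô's product rule applied to $\Gamma_{t,s}\bar Y_s$ on $[t,T]$ cancels the linear terms and yields
\[
\bar Y_t=\mathbb{E}\Big[\Gamma_{t,T}\bar\xi+\int_t^T\Gamma_{t,s}h_sds\,\Big|\,\mathcal F_t\Big]\quad\text{a.s.}
\]
This representation holds once the local martingale part $\int\Gamma_{t,s-}(\bar Z_s+\bar Y_{s-}\varphi_s)dB_s+\int\Gamma_{t,s-}(\bar U_s+\bar Y_{s-}\lambda_s)dM_s$ is shown to be a true martingale. Since $\Gamma\ge0$, $\bar\xi\le0$ and $h\le0$, we get $\bar Y_t\le0$ a.s. for each $t$. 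Because $Y^1,Y^2$ solve non-reflected BSDEs, they are RCLL, so $\bar Y_t\le 0$ for all $t$ a.s., which is (i).

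For (ii), under $\lambda>-1$ the Doléans-Dade formula of Remark \ref{remark of positivity} gives $\Gamma_{t_0,s}>0$ on $[t_0,T]$. If $\bar Y_{t_0}=0$, the representation at $t_0$ is the conditional expectation of a sum of two nonpositive terms equal to $0$, so $\Gamma_{t_0,T}\bar\xi=0$ and $\int_{t_0}^T\Gamma_{t_0,s}h_sds=0$ a.s. Hence $\bar\xi=0$ and $h=0$, $ds\otimes d\mathbb P$-a.e. on $[t_0,T]$. Plugging this back into the representation at any $t\ge t_0$ gives $\bar Y_t=0$, and right-continuity gives $Y^1=Y^2$ on $[t_0,T]$.

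The main obstacle is the integrability step: showing that the stochastic integrals above are uniformly integrable martingales and that the conditional expectation is finite under only a stochastic Lipschitz driver. The plan is as follows.
\begin{itemize}
\item The boundedness in \textbf{(H4)} makes $e^{\int|\delta|}$ bounded and $\sup_s\Gamma_{t,s}\in L^2$.
\item Together with $\bar Y\in\mathcal S^2_\beta$, $\bar Z\in\mathcal H^2_\beta$, $\bar U\in\mathcal M^2_{\gamma,\beta}$ and $A_T$ bounded (since $e^{\beta A}\ge1$), the BDG inequality gives $\mathbb{E}\big[(\int\Gamma^2(\ldots)^2)^{1/2}\big]\le \mathbb{E}[\sup\Gamma(\ldots)]<\infty$ via Cauchy--Schwarz. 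This is the argument used in Proposition \ref{Martingale Proposition}.
\item Integrability of $\int\Gamma_{t,s}|h_s|ds$ follows from $|h_s|\le 2\nu_s\gamma_s|\bar U_s|+|\phi_s|$ and $\phi/\alpha\in\mathcal H^2_\beta$, again by Cauchy--Schwarz.
\end{itemize}
If a direct bound fails for some term, we would localize with stopping times $\tau_k\uparrow T$ and pass to the limit using class (D) of $\Gamma\bar Y$.
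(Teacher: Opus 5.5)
Your proof is correct. For (ii) it matches what the paper indicates: integration by parts against the adjoint process $\Lambda^\star$ of Proposition \ref{Integr problm solved}.

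For (i) the paper takes a different route. It refers to the proof of Theorem \ref{Comparison theorem} with Remark \ref{Comparison BSDE}, which works as follows:
\begin{itemize}
\item remove the $z$- and $u$-linear parts by a Girsanov change of measure with the driftless density $d\Lambda_t=\Lambda_{t-}(\varphi_t\,dB_t+\lambda_t\,dM_t)$;
\item apply the Tanaka formula (Lemma \ref{Tanaka-type Formula}) to $\bar Y^{+}$, then It\^o's formula to $e^{\beta A}|\bar Y^{+}|^2$;
\item absorb the $y$-part through $\delta\le\mu\le\alpha^2$ and $\beta>2$, and conclude from $\bar{\mathbb{E}}[e^{\beta A_\eta}|\bar Y^{+}_\eta|^2]\le 0$ under $\mathbb{Q}$.
\end{itemize}
You instead put $\delta$ into the drift of the adjoint process and stay under $\mathbb{P}$. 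The single formula $\bar Y_t=\mathbb{E}[\Gamma_{t,T}\bar\xi+\int_t^T\Gamma_{t,s}h_s\,ds\mid\mathcal{F}_t]$ then delivers both (i) and (ii), with no change of measure, no Tanaka formula and no large-$\beta$ absorption. The price is that you need $\int_0^T|\delta_s|\,ds$ bounded, which (H4) supplies through $\mu$.

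What the paper's route buys is that it survives reflection. In Theorem \ref{Comparison theorem} the terms $\mathds{1}_{\{Y^1>Y^2\}}\,d\bar K$ have a sign by the Skorokhod conditions. In your representation, a term $\int_t^T\Gamma_{t,s-}\,d\bar K_s$ has no sign when the obstacles differ. The paper's route, however, gives no strict comparison.

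Your localization fallback is actually the cleanest way to close the argument. Since $\sup_s\Gamma_{t,s}\,\sup_s|\bar Y_s|\in L^1$ and $h\le 0$, for (i) you need integrability of neither $\int\Gamma h\,ds$ nor the $dM$-integral. For (ii) you can use (i) to keep $\Gamma_{t_0,\tau_k}\bar Y_{\tau_k}\le 0$ before letting $k\to\infty$.

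A few small corrections:
\begin{itemize}
\item The $dM$-integrand of your local martingale is $\Gamma_{t,s-}((1+\lambda_s)\bar U_s+\lambda_s\bar Y_{s-})$. Indeed $d[\Gamma_{t,\cdot},\bar Y]_s$ contains $\Gamma_{t,s-}\lambda_s\bar U_s\,dH_s=\Gamma_{t,s-}\lambda_s\bar U_s\,(dM_s+\gamma_s\,ds)$, and only the $\gamma_s\,ds$ part cancels $\lambda_s\gamma_s\bar U_s\,ds$.
\item To be predictable, $\lambda_s$ should be evaluated at $Y^2_{s-}$, which equals $Y^2_s$ for a.e.\ $s$.
\item (H4) as written bounds $\int_0^T\nu_s^2\gamma_s\,ds$ but only gives $|\lambda|^2\le\nu$. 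So the boundedness of $\int_0^T\lambda_s^2\gamma_s\,ds$, and your bound $|h_s|\le 2\nu_s\gamma_s|\bar U_s|+|\phi_s|$, tacitly use $|\lambda|\le\nu$. The paper's proof of Theorem \ref{Comparison theorem} makes the same tacit assumption.
\item $\Gamma$ already denotes the compensator of $H$ in the paper, so rename your adjoint process.
\end{itemize}
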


We now give a comparison theorem for a kind of BSDEs with {\it generalized} driver.
\begin{theorem}
	Assume that conditions of Theorem \ref{Comparison for BSDEs} hold and that $K^1$ and $K^2$ are two regulated, optional, increasing processes in $\mathcal{S}^2$. If there exist a triplet $\left(Y_t^i, Z_t^i,U^i_t\right)_{0 \leq t \leq T}, i=1,2$, that belongs to $\mathcal{S}^{2,\alpha}_{\beta} \times \mathcal{H}^{2}_{\beta} \times \mathcal{U}^{2}_{\lambda,\beta}$ satisfying the equations
	\begin{equation*}
		Y_t^i=\xi^i+\int_t^T f^i\left(s, Y_s^i, Z_s^i,U^i_s\right) d s+K_T^i-K_t^i-\int_t^T Z_s^i d B_s-\int_{t}^{T}U^i_s dM_s, \quad i=1,2,
		\label{BSDE Comparison}
	\end{equation*}
	and, moreover, if  $K^1-K^2$ is an increasing process, then $Y_t^1 \geq Y_t^2$, $t \in [0,T]$ a.s.
	\label{Comparison theorem for GBSDE}
\end{theorem}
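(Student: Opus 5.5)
The plan is to reduce to the standard comparison for BSDEs with default jump (Theorem \ref{Comparison for BSDEs}-(i)) by a Girsanov-type linearization. After the linearization, the increasing process $K^1-K^2$ acts as a nonnegative extra driver term. Set $\bar Y=Y^1-Y^2$, $\bar Z=Z^1-Z^2$, $\bar U=U^1-U^2$, $\bar\xi=\xi^1-\xi^2$ and $\bar K=K^1-K^2$, which is increasing, optional and regulated by hypothesis. Note that here the inequality is reversed with respect to the labels, so I will assume $\xi^1\ge\xi^2$ and $f^1(t,Y^2_t,Z^2_t,U^2_t)\ge f^2(t,Y^2_t,Z^2_t,U^2_t)$. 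By symmetry of the argument this is the same as the stated hypotheses with the roles of $1$ and $2$ exchanged, and it is the direction needed in Remark \ref{Strong supermartingale property}.

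\textbf{Linearization.} As in the proof of Theorem \ref{Comparison theorem}, I write
$$f^1(s,Y^1_s,Z^1_s,U^1_s)-f^2(s,Y^2_s,Z^2_s,U^2_s)=\delta_s\bar Y_s+\varphi_s\bar Z_s+\psi_s+\phi_s .$$
Here $|\delta_s|\le\mu_s$ and $|\varphi_s|\le\theta_s$ are predictable ratios defined with indicator cut-offs, $\phi_s\ge0$ is the driver gap, and $\psi_s=f^1(s,Y^2_s,Z^2_s,U^1_s)-f^1(s,Y^2_s,Z^2_s,U^2_s)\ge\lambda_s\bar U_s\gamma_s$. The last bound comes from the remark after \textbf{(H4)} with $\lambda_s:=\lambda^{Y^2_s,Z^2_s,U^2_s,U^1_s}_s\ge-1$ and $|\lambda_s|^2\le\nu_s$. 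Then $\bar Y$ satisfies
$$\bar Y_t=\bar\xi+\int_t^T(\delta_s\bar Y_s+\varphi_s\bar Z_s+\lambda_s\gamma_s\bar U_s+\chi_s)ds+\bar K_T-\bar K_t-\int_t^T\bar Z_sdB_s-\int_t^T\bar U_sdM_s,$$
with $\chi_s:=\psi_s-\lambda_s\gamma_s\bar U_s+\phi_s\ge0$.

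\textbf{Explicit representation.} Let $\Lambda^\star$ solve $d\Lambda^\star_t=\Lambda^\star_{t-}(\delta_tdt+\varphi_tdB_t+\lambda_tdM_t)$ with $\Lambda^\star_0=1$. By \textbf{(H4)}, $\int_0^T(|\delta_s|+\varphi_s^2+\lambda_s^2\gamma_s)ds$ is bounded. Proposition \ref{Integr problm solved} and Remark \ref{remark of positivity} then give $\Lambda^\star\ge0$ with $\mathbb{E}\sup_t|\Lambda^\star_t|^2<\infty$. Next I apply the Gal'chouk--Lenglart product formula (Theorem \ref{Ito's formula Theorem}) to $\Lambda^\star\bar Y$. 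Since $\Lambda^\star$ is RCLL and jumps only at the totally inaccessible time $\tau$, there is no bracket with $\Delta_+\bar K$. The drift terms cancel, and the bracket $d[\Lambda^\star,\bar Y]$ produces exactly $\Lambda^\star_{s-}(\varphi_s\bar Z_s+\lambda_s\bar U_s\gamma_s)ds$ up to a martingale. This yields, for every $t$,
$$\Lambda^\star_t\bar Y_t=\mathbb{E}\Big[\Lambda^\star_T\bar\xi+\int_t^T\Lambda^\star_s\chi_sds+\int_{]t,T]}\Lambda^\star_{s-}d\bar K^{\ast}_s+\sum_{t\le s<T}\Lambda^\star_s\Delta_+\bar K_s\,\Big|\,\mathcal{F}_t\Big].$$
To justify dropping the local martingale, I would argue as in Proposition \ref{Martingale Proposition}: BDG plus $\mathbb{E}\sup|\Lambda^\star|^2<\infty$, $\bar Y\in\mathcal S^2_\beta$ (or a localization followed by a class (D) limit), $\bar Z\in\mathcal H^2$, $\bar U\in\mathcal M^2_\gamma$.

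\textbf{Conclusion.} Every term in the conditional expectation is nonnegative, so $\Lambda^\star_t\bar Y_t\ge0$. Positivity of $\Lambda^\star_t$ would directly give $\bar Y_t\ge0$, but $\Lambda^\star$ may vanish after $\tau$ when $\lambda_\tau=-1$. I therefore redo the identity on $[t,T]$ with the normalized process $\Lambda^{\star}_{\cdot}/\Lambda^\star_t$, starting it at $1$ at time $t$ rather than at time $0$. This gives $\bar Y_t\ge0$ a.s. for each fixed $t$; the same works at stopping times, and the optional section theorem extends it to all $t$. The main obstacle I expect is this last step together with the rigorous product formula for the optional (non-RCLL) semimartingale $\bar Y$: one has to check carefully that the right-jump part contributes $\Lambda^\star_s\Delta_+\bar K_s$ with the correct sign, and that the local martingale is a true martingale. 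I would handle the latter by localization with stopping times $\tau_k\uparrow T$ and Fatou/dominated convergence, using the square-integrability of $\Lambda^\star$ and the $\mathcal{S}^2$ bounds on $Y^i$ and $K^i$.
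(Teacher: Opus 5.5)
Your proposal is correct and follows the route the paper indicates for this result: linearize $f^1$ along $(Y^2,Z^2,U^2)$ via \textbf{(H4)}, take the exponential $\Lambda^\star$ of Proposition~\ref{Integr problm solved} with drift $\delta$, apply the regulated integration-by-parts formula of Corollary~\ref{integration by part formula}, and read off nonnegativity term by term after restarting $\Lambda^\star$ at time $t$; your reading of the hypotheses as $\xi^1\geq\xi^2$ and $f^1\geq f^2$ along $(Y^2,Z^2,U^2)$ is the right repair of the misprinted inequalities. One harmless slip: since the $K^i$ are only optional, $\bar K^{\ast}$ may jump at $\tau$, so $[\Lambda^\star,\bar Y^{\ast}]$ also contains $-\Lambda^\star_{\tau-}\lambda_\tau\Delta_{-}\bar K^{\ast}_\tau$ and the reflection term becomes $\int_{]t,T]}\Lambda^\star_s\,d\bar K^{\ast}_s$ rather than $\int_{]t,T]}\Lambda^\star_{s-}\,d\bar K^{\ast}_s$, which is still nonnegative because $\lambda\geq-1$ (likewise, the bound $\mathbb{E}\sup_t|\bar Y_t|^2<\infty$ you use is not a hypothesis, but it follows from the equation because $A_T$ is bounded under \textbf{(H4)}).
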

\begin{remark}
	\begin{itemize}
		\item The proof of Theorem \ref{Existence and uniqueness unconstrained BSDE} is constructed in two steps: The first one for a generator independent of the $(y,z,u)$-variables, and it's based on the representation theorem \ref{Representation property} for the existence and uniqueness and a similar procedure as in \cite[Proposition 1]{elmansourielotmani2023} for the integrable property of the solution.\\
		For the second step, we approach the solution of BSDE \eqref{basic equation : classical BSDE} through Picard's iteration method using the previous Step in a suitable Banach space.
		
		\item The proof of Proposition \ref{Preliminalry estimaes} follows a similar argument as the one used in the preliminary estimates showed in  \cite[Proposition 1]{elmansourielotmani2023}.
		
		\item The proof of Theorem \ref{Comparison for BSDEs}-(i) can be obtained from the one of Theorem \ref{Comparison theorem} with Remark \ref{Comparison BSDE}, while the proof of the strict version and of the Theorem \ref{Comparison theorem for GBSDE}  follows similar approach based on the results of Corollary \ref{integration by part formula}, Remark \ref{RCLL integration by part formula}, Remark \ref{Remark on the right-continuous part} and  Proposition \ref{Integr problm solved}.
	\end{itemize}
\end{remark}
\subsection*{It\^o's formula for processes with regulated trajectories}
Using the well-known It\^o's formula for right-continuous semimartingales, we may develop a generic formula that applies to a specific class of irregular processes that are not always right continuous.
\begin{theorem}
	Let $Y=\left(Y^1,Y^2,\cdots,Y^n\right)$ be an adapted $n$-dimensional process with regulated trajectories of the form
	\begin{equation}
		Y_t=Y^{\ast}_t+\sum_{0 \leq s <t} \Delta_{+} Y_s,\quad \forall t \in [0,T],
		\label{form of the process for Ito}
	\end{equation}
	where $Y^{\ast}=\left(Y^{\ast,1},Y^{\ast,2},\cdots,Y^{\ast,n}\right)$ is an RCLL adapted $n$-dimensional semimartingale and $\sum_{s <t} \left|\Delta_{+}  Y_s\right|  <\infty$  a.s. Let $F$ be
	a twice continuously differentiable function on $\mathbb{R}^n$. Then the process
	$\left(F\left(Y_t\right)\right)_{t \leq T}$ also has the form \eqref{form of the process for Ito}. Moreover, almost surely, for each $n \geq 1$ and all $t \leq T$,
	\begin{equation*}
		\begin{split}
			F\left(Y_t\right)
			&=F\left(Y_0\right)+\sum_{k=1}^n \int_{0}^{t} D^k F\left(Y_{s-}\right)dY^{\ast,k}_s+\dfrac{1}{2}\sum_{k,l=1}^n  \int_{0}^{t} D^k D^l F\left(Y_{s-}\right) d\left[Y^{\ast,k},Y^{\ast,l}\right]^c_s\\
			& \quad+\sum_{0 < s \leq t}\left\{ F\left(Y_s\right)-F\left(Y_{s-}\right)-\sum_{k=1}^n  D^k F\left(Y_{s-}\right)\Delta_{-}Y^{\ast,k}_s\right\}\\
			&\quad+\sum_{0 \leq s < t} \left\{F\left(Y_{s+}\right)-F\left(Y_{s}\right)\right\},
		\end{split}
	\end{equation*}
	where $D^k$ denotes the differentiation operator with respect to the $k$-th coordinate,and $\left[\cdot,\cdot\right]^c$ denotes the continuous part of the quadratic variation $\left[\cdot,\cdot\right]$.
	\label{Ito's formula Theorem}
\end{theorem}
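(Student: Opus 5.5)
The plan is to reduce the statement to the classical It\^o formula for RCLL semimartingales. I would exploit the fact that $Y$ differs from the RCLL semimartingale $Y^{\ast}$ only by the left-continuous, purely discontinuous, finite-variation process $Y^g_t:=\sum_{0\le s<t}\Delta_{+}Y_s$.

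\textbf{Step 1: jumps and limits.} Fix $\omega$ outside a null set, so that $\sum_{s<T}|\Delta_+Y_s|<\infty$. Then $Y^g$ is left-continuous, and for each $s$ we have $Y^g_{s+}-Y^g_s=\Delta_+Y_s$. Since $Y^{\ast}$ is right-continuous, this gives $Y_{s+}=Y^{\ast}_s+Y^g_{s+}$ and $Y_{s-}=Y^{\ast}_{s-}+Y^g_s$. Hence $\Delta_-Y_s=\Delta_-Y^{\ast}_s$. By Remark \ref{Many discontinuity} the set $J$ of right-jump times is countable.

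\textbf{Step 2: approximation by RCLL processes.} Enumerate the right jumps by decreasing size and keep only those with $|\Delta_+Y_s|>1/m$; there are finitely many of them pathwise. Using the exhausting stopping times from Step 1 of the proof of Theorem \ref{existence and uniqueness for the one reflected}, define the RCLL process
\[
Y^{(m)}_t:=Y^{\ast}_t+\sum_{s<t,\ |\Delta_+Y_s|>1/m}\Delta_+Y_s ,
\]
and consider the shifted-time right-continuous version $\tilde Y^{(m)}_t:=Y^{\ast}_t+\sum_{s\le t,\,|\Delta_+Y_s|>1/m}\Delta_+Y_s$. This is an RCLL semimartingale whose extra jumps sit at times $s$ and are added at $s$ rather than after $s$.

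Applying the classical It\^o formula to $F(\tilde Y^{(m)})$ only detects the total jump at $s$. To separate the two ordered jumps $Y_{s-}\to Y_s\to Y_{s+}$, I would argue differently. On each interval between consecutive big right-jump times $\rho_{i-1}<\rho_i$, the process $Y^{(m)}$ coincides on $(\rho_{i-1},\rho_i]$ with an RCLL semimartingale, namely $Y^{\ast}$ plus a constant. Applying the standard It\^o formula there, and adding the increments $F(Y^{(m)}_{\rho_i+})-F(Y^{(m)}_{\rho_i})$ at the junctions, gives exactly the claimed formula for $Y^{(m)}$. The continuous quadratic covariation only involves $Y^{\ast}$, because the added pieces are of finite variation and pure jump.

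\textbf{Step 3: limit $m\to\infty$.} The difference $Y-Y^{(m)}$ is bounded by $\sum_{s<T,\,|\Delta_+Y_s|\le 1/m}|\Delta_+Y_s|\to0$ uniformly in $t$, pathwise. The paths of $Y$ are bounded since they are regulated, so $F,DF,D^2F$ are uniformly continuous on a compact set containing all the paths. The terms then converge as follows.
\begin{itemize}
\item The stochastic integrals $\int DF(Y^{(m)}_{s-})\,dY^{\ast}_s$ converge in probability by dominated convergence for stochastic integrals, since the integrands are locally bounded and converge pointwise.
\item The $[\cdot]^c$-integral converges by the same argument.
\item The left-jump sum converges by the Taylor bound $|F(x)-F(y)-DF(y)(x-y)|\le C|x-y|^2$ together with $\sum_s|\Delta_-Y^{\ast}_s|^2<\infty$.
\item The right-jump sum converges by $|F(Y_{s+})-F(Y_s)|\le C|\Delta_+Y_s|$ and summability, so that dominated convergence over the countable set $J$ applies.
\end{itemize}
Finally, the decomposition of $F(Y)$ in the form \eqref{form of the process for Ito} follows by reading off the RCLL part, namely all terms except the last sum, which is itself summable.

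I expect Step 3, specifically the uniform control of the jump sums, to be the main technical point. A secondary issue is that the approximating times must be stopping times so that the junction argument in Step 2 is legitimate. For the latter I would rely on the stopping-time construction already used in the paper.
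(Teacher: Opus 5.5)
The paper does not prove this statement itself; it only refers to the proof of Theorem A.1 in \cite{klimsiak2019reflected}. Judged as a self-contained argument, your proposal is correct. Truncating the right jumps at level $1/m$, applying the RCLL It\^o formula between successive large right-jump times, and letting $m\to\infty$ with the pathwise dominations $C|\Delta_{+}Y_s|$ and $C|\Delta_{-}Y^{\ast}_s|^2$ does produce the formula.

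It is, however, longer than necessary. Your worry that the classical formula applied to $\tilde Y^{(m)}$ ``only detects the total jump'' is unfounded. The total jump term can be split through the intermediate value $Y_s$, which is known pathwise. Once this is seen, no truncation is needed at all. Set $X_t:=Y_{t+}$ for $t<T$ and $X_T:=Y_T$. Then $X_t=Y^{\ast}_t+\sum_{0\le s\le t}\Delta_{+}Y_s$ is an RCLL semimartingale, adapted because the filtration is right-continuous. It satisfies $X_{s-}=Y_{s-}$, $\Delta X_s=\Delta_{-}Y^{\ast}_s+\Delta_{+}Y_s$ and $[X]^c=[Y^{\ast}]^c$. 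In the classical formula for $F(X_t)$, one has $\int_0^t DF(X_{s-})\,dX_s=\int_0^t DF(Y_{s-})\,dY^{\ast}_s+\sum_{0<s\le t}DF(Y_{s-})\cdot\Delta_{+}Y_s$. Each jump term splits as
\[
\bigl\{F(Y_s)-F(Y_{s-})-DF(Y_{s-})\cdot\Delta_{-}Y^{\ast}_s\bigr\}+\bigl\{F(Y_{s+})-F(Y_s)\bigr\}-DF(Y_{s-})\cdot\Delta_{+}Y_s .
\]
These are three absolutely summable series, bounded pathwise by $C|\Delta_{-}Y^{\ast}_s|^2$, $C|\Delta_{+}Y_s|$ and $C|\Delta_{+}Y_s|$. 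The last one cancels the extra sum in the stochastic integral, which gives the identity for $F(Y_{t+})$. Now write $F(Y_{0+})=F(Y_0)+\{F(Y_{0+})-F(Y_0)\}$ and $F(Y_t)=F(Y_{t+})-\{F(Y_{t+})-F(Y_t)\}$. This yields the statement simultaneously for all $t$, together with the decomposition of $F(Y)$. This route avoids the stopping-time exhaustion, the dominating process and the ucp limit. Your route makes the localisation between right jumps explicit, at the price of those extra technicalities.

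If you keep your route, fix three details.
\begin{enumerate}
\item $Y^{(m)}$ as defined is not RCLL, because the added sum over $s<t$ is left-continuous. It is regulated with finitely many right jumps, which is in fact how you use it.
\item The stopping times you need are the successive d\'ebuts of the optional set $\{|\Delta_{+}Y|>1/m\}$, not the times built for $\zeta$ in the paper. This set is optional because $\Delta_{+}Y=(Y-Y^{\ast})_{\cdot+}-(Y-Y^{\ast})$ and $Y-Y^{\ast}$ is adapted and left-continuous.
\item Dominated convergence for the stochastic integrals needs a single locally bounded predictable bound, not just local boundedness of each integrand. A suitable choice is $\sup\{|DF(x)|:|x|\le\sup_{u<s}|Y^{\ast}_u|+\sum_{u<s}|\Delta_{+}Y_u|\}$, which is adapted and left-continuous.
\end{enumerate}
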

\begin{proof}
	See the proof of Theorem A.1 in \cite{klimsiak2019reflected}.
\end{proof}

\begin{corollary}
	Let $n=2$ and $F:\mathbb{R}^2 \to \mathbb{R}$ A twice differential equation on $\mathbb{R}^2$ given by $F(x,y)=e^{\beta x} \left| y\right|^2$. Let $Y^1=A:=\left(A_t\right)_{t \leq T}$ be a continuous adapted process with finite variation on $[0,T]$, and  $Y^2:=\left(Y_t\right)_{t \leq T}$ a one-dimensional adapted process with regulated paths of the form \eqref{form of the process for Ito}. Then, By applying Theorem \ref{Ito's formula Theorem}, we get for all $t \leq T$,
	\begin{equation*}
		\begin{split}
			e^{\beta A_t} \left| Y_t \right|^2&=  \left| Y_0 \right|^2+\beta \int_{0}^{t} e^{\beta A_s} \left| Y_s \right|^2 dA_s+2\int_{0}^{t} e^{\beta A_s} Y_{s-} dY^{\ast}_s+\int_{0}^{t} e^{\beta A_s} d \left[Y^{\ast}\right]^c_s\\
			&\quad +\sum_{0 < s \leq t} e^{\beta A_s} \left| \Delta_{-} Y_{s} \right|^2+\sum_{0 \leq s < t} e^{\beta A_s} \left(\left| \Delta_{+} Y_{s}\right|^2+2Y_{s}\Delta_{+} Y_s \right).
		\end{split}
	\end{equation*}
	\label{Application of Ito formula}
\end{corollary}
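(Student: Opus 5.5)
The plan is to apply Theorem \ref{Ito's formula Theorem} with $n=2$ to the pair $(A,Y)$ and the function $F(x,y)=e^{\beta x}|y|^2$, and then to simplify each term. First I check that the hypotheses hold.

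\emph{The pair has the form \eqref{form of the process for Ito}.} Since $A$ is continuous, adapted and of finite variation, it is an RCLL semimartingale with $A=A^{\ast}$ and $\Delta_{+}A=\Delta_{-}A=0$. The process $Y=Y^{\ast}+\sum_{0\le s<\cdot}\Delta_{+}Y_s$ has this form by assumption, with $\sum_{s<T}|\Delta_{+}Y_s|<\infty$ a.s. Hence the pair $(A,Y)$ has the form \eqref{form of the process for Ito}.

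\emph{Derivatives of $F$.} The function $F$ is $C^2$, with
\[
D^1F=\beta e^{\beta x}y^2,\qquad D^2F=2e^{\beta x}y,\qquad D^2D^2F=2e^{\beta x},\qquad D^1D^2F=2\beta e^{\beta x}y,\qquad D^1D^1F=\beta^2e^{\beta x}y^2 .
\]
Finally, $A_0=0$ gives $F(A_0,Y_0)=|Y_0|^2$.

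\emph{Continuous terms.} Because $A$ is continuous, $A_{s-}=A_s$ for every $s$.
\begin{itemize}
\item The first-order term in $A$ is $\int_0^t\beta e^{\beta A_s}|Y_{s-}|^2\,dA_s$. The set $\{Y_{s-}\neq Y_s\}$ is at most countable by Remark \ref{Many discontinuity}, and $dA$ is diffuse, so this integral equals $\beta\int_0^t e^{\beta A_s}|Y_s|^2\,dA_s$.
\item The first-order term in $Y$ is $2\int_0^t e^{\beta A_s}Y_{s-}\,dY^{\ast}_s$.
\item In the second-order part, $[A,A]^c=[A,Y^{\ast}]^c=0$, since $A$ is continuous and of finite variation. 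Only $\tfrac12\int_0^t 2e^{\beta A_s}\,d[Y^{\ast}]^c_s=\int_0^t e^{\beta A_s}\,d[Y^{\ast}]^c_s$ survives.
\end{itemize}

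\emph{Left-jump terms.} Since $A$ has no jumps, $\Delta_{-}Y_s=\Delta_{-}Y^{\ast}_s$. The summand at $s$ becomes
\[
e^{\beta A_s}\bigl(Y_s^2-Y_{s-}^2-2Y_{s-}\Delta_{-}Y_s\bigr)=e^{\beta A_s}|\Delta_{-}Y_s|^2 .
\]

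\emph{Right-jump terms.} Using $A_{s+}=A_s$, the summand at $s$ becomes
\[
e^{\beta A_s}\bigl(Y_{s+}^2-Y_s^2\bigr)=e^{\beta A_s}\bigl(|\Delta_{+}Y_s|^2+2Y_s\Delta_{+}Y_s\bigr).
\]

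Collecting these terms gives exactly the stated identity. There is no real obstacle here. The only points needing care are justifying the replacement of $Y_{s-}$ by $Y_s$ in the $dA$-integral, which relies on the diffuseness of $dA$ together with countability of the jumps, and the vanishing of the cross-variation terms involving $A$.
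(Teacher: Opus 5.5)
Your proposal is correct and is exactly the paper's route: the corollary comes from plugging $F(x,y)=e^{\beta x}|y|^2$ and the pair $(A,Y)$ into Theorem \ref{Ito's formula Theorem} and simplifying term by term. Your use of $A_0=0$, which holds for $A_t=\int_0^t\alpha_s^2\,ds$, is genuinely needed to get the leading term $|Y_0|^2$.

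One small fix: $\Delta_{-}Y_s=\Delta_{-}Y^{\ast}_s$ does not follow from $A$ having no jumps. It follows from the left-continuity of $\sum_{0\le u<\cdot}\Delta_{+}Y_u$, which is guaranteed by $\sum_{u<T}|\Delta_{+}Y_u|<\infty$.
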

As an application of Theorem \ref{Ito's formula Theorem} using the function $F(y^1,y^2)=y^1 y^2$ an the decomposition $\left[Y^{1,\ast},Y^{2,\ast}\right]=\left[Y^{1,\ast},Y^{2,\ast}\right]^c+\sum_{0<s \leq \cdot} \Delta_{-} Y^1_s \Delta_{-} Y^2_s$ (see \cite[Theorem I.4.52]{jacod2013limit}), we obtain the following integration by part formula:
\begin{corollary}
	Let $Y^1$, $Y^2$ be two adapted processes with regulated trajectories of the form \eqref{form of the process for Ito}. Then
	\begin{equation*}
		\begin{split}
			Y^1_t Y^2_t&=Y^1_0 Y^2_0+\int_{0}^{t}Y^1_{s-}dY^{2,\ast}_s+\int_{0}^{t}Y^2_{s-}dY^{1,\ast}_s+\left[Y^{1,\ast},Y^{2,\ast}\right]_t\\
			&\qquad+\sum_{0 \leq s <t}\left(Y^1_{s+}Y^2_{s+}-Y^1_s Y^2_s\right),\quad t \in [0,T].
		\end{split}
	\end{equation*}
	\label{integration by part formula}
\end{corollary}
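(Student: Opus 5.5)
We apply Theorem \ref{Ito's formula Theorem} with $n=2$ to $F(y^1,y^2)=y^1y^2$ and the two-dimensional regulated process $(Y^1,Y^2)$. Each $Y^j=Y^{j,\ast}+\sum_{0\le s<\cdot}\Delta_+Y^j_s$ with $Y^{j,\ast}$ an RCLL semimartingale and summable right jumps, as required. Since $D^1F(y)=y^2$, $D^2F(y)=y^1$, $D^1D^2F=D^2D^1F=1$ and $D^kD^kF=0$, the first two terms of the formula become
$$\int_0^t Y^2_{s-}dY^{1,\ast}_s+\int_0^t Y^1_{s-}dY^{2,\ast}_s .$$
The second-order term becomes $\frac12\cdot 2\,[Y^{1,\ast},Y^{2,\ast}]^c_t=[Y^{1,\ast},Y^{2,\ast}]^c_t$. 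The last sum is directly $\sum_{0\le s<t}\bigl(Y^1_{s+}Y^2_{s+}-Y^1_sY^2_s\bigr)$.

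Next we simplify the left-jump sum. Because the purely right-jumping part $\sum_{0\le s<\cdot}\Delta_+Y^j_s$ is left-continuous, we have $Y^j_{s-}=Y^{j,\ast}_{s-}+\sum_{u<s}\Delta_+Y^j_u$. Hence $\Delta_-Y^j_s=\Delta_-Y^{j,\ast}_s$. Using the algebraic identity
$$ab-a'b'-b'(a-a')-a'(b-b')=(a-a')(b-b'),$$
each summand of the left-jump sum equals $\Delta_-Y^{1,\ast}_s\,\Delta_-Y^{2,\ast}_s$.

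Finally, we combine the continuous part of the bracket with these jump products. By the decomposition $[Y^{1,\ast},Y^{2,\ast}]=[Y^{1,\ast},Y^{2,\ast}]^c+\sum_{0<s\le\cdot}\Delta_-Y^1_s\Delta_-Y^2_s$ from \cite[Theorem I.4.52]{jacod2013limit}, together with $\Delta_-Y^j=\Delta_-Y^{j,\ast}$, the two contributions sum to the full bracket $[Y^{1,\ast},Y^{2,\ast}]_t$. This yields the stated formula.

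The only delicate point is the identification $\Delta_-Y^j=\Delta_-Y^{j,\ast}$, which relies on the left-continuity of the pure right-jump part. This in turn needs the summability $\sum_{s<t}|\Delta_+Y^j_s|<\infty$ a.s. assumed in the form \eqref{form of the process for Ito}. Everything else is bookkeeping.
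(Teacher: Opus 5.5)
Your proof is correct and follows the paper's route exactly: you apply Theorem \ref{Ito's formula Theorem} to $F(y^1,y^2)=y^1y^2$, and you merge the continuous bracket with the left-jump products via \cite[Theorem I.4.52]{jacod2013limit}. You also make explicit the identification $\Delta_-Y^j=\Delta_-Y^{j,\ast}$, which follows from the left-continuity of the summable right-jump part; the paper uses this tacitly when it writes the bracket decomposition with $\Delta_-Y^1_s\Delta_-Y^2_s$.
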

\begin{remark}
	Note that when $X^1$ and $X^2$ have  RCLL paths, then $X^1=X^{1,\ast}$, $X^2=X^{2,\ast}$, and Corollary \ref{integration by part formula} turns into the classical integration by part formula given, for example, in \cite[Corollary II.2]{Protter}.
	\label{RCLL integration by part formula}
\end{remark}

\subsection*{Tanaka-type formula}
Theorem 66, found on page 210 in \cite{Protter}, is extended in this section. It provides an alternative version of the classical Tanaka's to the case of strong optional semimartingale. We will utilize this lemma to demonstrate the reflected BSDE with one irregular barrier comparison theorem.

First, let us recall the definition of an $\mathbb{F}$-optional semimartingales which can be found in Gal'{\v{c}}uk's seminal work \cite[Page 462]{gal1981optional}.
\begin{definition}
	The process $(\mathsf{X}_t)_{t \leq T}$ is termed an optional semimartingale if it can be expressed as $\mathsf{X} = \mathsf{X}_0 + \mathsf{K} + \mathsf{N}$, where $\mathsf{N}$ is an RCLL local martingale, $\mathsf{K}$ is an $\mathbb{F}$-optional process with finite variation and regulated trajectories, satisfying $\mathsf{K}_0=\mathsf{N}_0=0$, and $\mathsf{X}_0$ is an $\mathcal{F}_0$-measurable finite random variable.
\end{definition}

The key takeaway from this subsection is as follows:
\begin{lemma}[Tanaka-type Formula]
	Let $Y$ be an adapted process of the form \eqref{form of the process for Ito} and $\Phi : \mathbb{R} \rightarrow \mathbb{R}$ be a convex function. Then, $\Phi(Y)$ is an $\mathbb{F}$-optional semimartingale. Moreover, denoting by $\Phi^{\prime}$ the left-hand derivative of the convex function $F$. Then, we have
	$$
	\Phi(Y_t)=\Phi(Y_0)+\int_{0}^{t} \Phi^{\prime}(Y_{s-}) dY^{\ast}_s+\mathcal{L}_t,
	$$
	where $\mathcal{L}$ is a non-decreasing $\mathbb{F}$-adapted process with regulated trajectories (which is in general neither left-continuous nor right-continuous) such that
	$$
	\Delta_{-}\mathcal{L}_{t}=\Delta_{-}\Phi(Y_t)-\Phi^{\prime}(Y_{s-})\Delta_{-}Y_t,\quad \text{ and } \quad\Delta_{+} \mathcal{L}_{t}=\Phi(Y_{t+})-\Phi(Y_t).
	$$
	\label{Tanaka-type Formula}
\end{lemma}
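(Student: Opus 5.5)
The plan is to reduce the formula to the classical Meyer--Tanaka formula for RCLL semimartingales (Theorem 66, p.~210 in \cite{Protter}). The tool for this reduction is the It\^o formula for regulated processes (Theorem \ref{Ito's formula Theorem}), applied to smooth convex approximations of $\Phi$.

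\textbf{Step 1 (smooth case).} First take $\Phi$ convex and $C^2$. Theorem \ref{Ito's formula Theorem} with $n=1$ gives
$$
\Phi(Y_t)=\Phi(Y_0)+\int_0^t\Phi'(Y_{s-})dY^{\ast}_s+\mathcal{L}_t ,
$$
where
$$
\mathcal{L}_t:=\frac12\int_0^t\Phi''(Y_{s-})d[Y^{\ast}]^c_s+\sum_{0<s\le t}\big\{\Phi(Y_s)-\Phi(Y_{s-})-\Phi'(Y_{s-})\Delta_-Y_s\big\}+\sum_{0\le s<t}\big\{\Phi(Y_{s+})-\Phi(Y_s)\big\}.
$$
Here I use $\Delta_-Y=\Delta_-Y^{\ast}$, which holds because $\sum_{s<\cdot}\Delta_+Y_s$ is left-continuous. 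The first two terms are non-decreasing: $\Phi''\ge0$, and each left-jump summand is $\ge0$ by the supporting-line inequality for convex functions. The last sum is exactly the right-jump part, so $\Delta_+\mathcal{L}_t=\Phi(Y_{t+})-\Phi(Y_t)$. Likewise $\Delta_-\mathcal{L}_t=\Delta_-\Phi(Y_t)-\Phi'(Y_{t-})\Delta_-Y_t$. This matches the two jump identities in the statement. Optionality of $\Phi(Y)$, and hence the optional-semimartingale property of $\Phi(Y)$, follows since $\Phi(Y)$ then has the form \eqref{form of the process for Ito}.

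\textbf{Step 2 (general convex $\Phi$).} Mollify to get $\Phi_k=\Phi*\rho_k$, which are convex and $C^2$. Then $\Phi_k\to\Phi$ locally uniformly and $\Phi_k'\to\Phi'$ (left derivative) pointwise, after shifting the mollifier to the left. Localize by stopping times $T_m$ such that $Y^{\ast}$ and the total right-jump variation are bounded on $[0,T_m)$. Dominated convergence for stochastic integrals then gives $\int\Phi_k'(Y_{s-})dY^{\ast}_s\to\int\Phi'(Y_{s-})dY^{\ast}_s$ uniformly in probability. Define $\mathcal{L}$ as the limit of $\mathcal{L}^k$, that is,
$$
\mathcal{L}:=\Phi(Y)-\Phi(Y_0)-\int_0^{\cdot}\Phi'(Y_{s-})dY^{\ast}_s .
$$
Regulated trajectories of $\mathcal{L}$ are inherited from $\Phi(Y)$ and from the RCLL stochastic integral. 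The jump formulas pass to the limit pointwise in $t$.

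\textbf{Step 3 (monotonicity; main obstacle).} For the RCLL part $\mathcal{L}^{\ast}=\mathcal{L}-\sum_{s<\cdot}\Delta_+\mathcal{L}_s$, monotonicity is preserved under the limit, since it is a limit of non-decreasing processes. I would make this precise by identifying $\mathcal{L}^{\ast}$ with the Meyer--Tanaka term for the RCLL semimartingale obtained by concatenating $Y^{\ast}$ between right jumps. That term is local time plus non-negative convex left-jump corrections.

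The delicate point is the right-jump contribution $\Phi(Y_{s+})-\Phi(Y_s)$. For $\mathcal{L}$ itself to be non-decreasing, each of these jumps must be non-negative. In the situation where the lemma is used, with $\Phi(x)=x^+$ and $Y=\bar Y$ in Theorem \ref{Comparison theorem}, this holds whenever $\Delta_+Y_s\ge0$ on $\{Y_s>0\}$ or $Y_s\le 0$. So I would verify it by splitting $\{Y_s>0\}$, $\{Y_s\le0\}$ and checking the sign of $\Delta_+Y_s$.

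In general I expect this to be the hardest part. My first attempt would be to absorb the linear part $\Phi'(Y_s)\Delta_+Y_s$ against $dY$'s right jumps, so that the remainder is $\ge0$ by convexity, and then check that this is compatible with the stated formula $\Delta_+\mathcal{L}=\Phi(Y_{t+})-\Phi(Y_t)$.
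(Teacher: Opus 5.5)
\textbf{Gap: Step 3 cannot be completed, because the statement as written is false.}

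Your Steps 1 and 2 are sound. The It\^o formula for regulated processes, combined with convex mollification (arranged so that $\Phi_k'\to\Phi'$ is the left derivative), does give the displayed identity, both jump relations, and monotonicity of the right-continuous part $\mathcal{L}^{\ast}$.

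The problem is the right jumps. Nothing in the formula compensates them linearly, so $\Delta_{+}\mathcal{L}_t=\Phi(Y_{t+})-\Phi(Y_t)$ can be negative. For example, take $Y^{\ast}\equiv 1$ and a single right jump $\Delta_{+}Y_{1/2}=-1$, so that $Y=\mathds{1}_{[0,1/2]}$, and take $\Phi(x)=x^{+}$. Then $dY^{\ast}=0$, and the formula forces
$$\mathcal{L}_t=\Phi(Y_t)-\Phi(Y_0)=-\mathds{1}_{\{t>1/2\}},$$
which is decreasing. The case $\Phi(x)=x$ with any negative right jump fails in the same way. So there is no general argument showing $\Phi(Y_{s+})\ge\Phi(Y_s)$, and you should not look for one.

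\textbf{How to repair the statement.} Your idea of absorbing $\Phi'(Y_s)\Delta_{+}Y_s$ is the right repair. However, as you suspected, it is not compatible with the stated jump formula: the statement itself has to change. The correct version is the form of Lemma 9.1 in \cite{grigorova2020optimal}, which is the result the paper cites as its proof. It reads
$$\Phi(Y_t)=\Phi(Y_0)+\int_0^t\Phi'(Y_{s-})\,dY^{\ast}_s+\sum_{0\le s<t}\Phi'(Y_s)\Delta_{+}Y_s+\mathcal{L}_t,\qquad \Delta_{+}\mathcal{L}_t=\Phi(Y_{t+})-\Phi(Y_t)-\Phi'(Y_t)\Delta_{+}Y_t\ge 0,$$
with $\Delta_{-}\mathcal{L}$ as stated (and $\Phi'(Y_{t-})$ in place of the typo $\Phi'(Y_{s-})$).

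With this correction your argument goes through and needs no case analysis:
\begin{itemize}
\item In Step 1, every piece of $\mathcal{L}^k$, including the right-jump sum, is nondecreasing by the supporting-line inequality.
\item In Step 2, the extra sum converges pathwise by dominated convergence. This uses $\sum_{s<T}|\Delta_{+}Y_s|<\infty$, the pathwise boundedness of $Y$, and $\Phi_k'(Y_s)\to\Phi'(Y_s)$.
\item Then $\mathcal{L}$ is a pointwise limit of nondecreasing processes, hence nondecreasing.
\end{itemize}

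\textbf{About the application.} Your case split for $\Phi(x)=x^{+}$ and $Y=\bar Y$ in Theorem \ref{Comparison theorem} is correct. On $\{\bar Y_s>0\}$ the Skorokhod condition gives $\Delta_{+}K^1_s=0$, so $\Delta_{+}\bar Y_s=\Delta_{+}K^2_s\ge 0$, and the right jumps of $\bar Y^{+}$ are nonnegative. This rescues that particular use of the lemma. It is, however, a property of that specific $\bar Y$, not a proof of the lemma as stated.
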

\begin{proof}
	See the proof of Lemma 9.1 in \cite{grigorova2020optimal}.
\end{proof}

\section*{Funding}
No funding was received for this paper.

\section*{Disclosure statement}
No potential conflict of interest was reported by the authors.

%
\end{document}